\documentclass[11pt, oneside]{amsart}

\usepackage{amsmath,ifthen, amsfonts, amssymb,
srcltx, 
   amsopn,tikz , url}
\usetikzlibrary{decorations.markings,arrows}

\usepackage{hyperref}

\usepackage{graphicx}

\newtheorem{thm}{Theorem}[section]
\newtheorem{lem}[thm]{Lemma}

\newtheorem{cor}[thm]{Corollary}
\newtheorem{conj}[thm]{Conjecture}
\newtheorem{prop}[thm]{Proposition}

\theoremstyle{definition}
\newtheorem{defn}[thm]{Definition}
\newtheorem{rem}[thm]{Remark}
\newtheorem{exmp}[thm]{Example}

\newtheorem{conv}[thm]{Convention}

\newtheorem{prob}[thm]{Problem}
\newtheorem{const}[thm]{Construction}

\DeclareMathOperator{\degree}{degree}

\DeclareMathOperator{\link}{link}

\DeclareMathOperator{\stab}{Stabilizer}

\newcommand{\scname}[1]{\text{\sf #1}}
\newcommand{\area}{\scname{Area}}

\newcommand{\field}[1]{\mathbb{#1}}
\newcommand{\integers}{\ensuremath{\field{Z}}}

\newcommand{\naturals}{\ensuremath{\field{N}}}

\newcommand{\boundary}   {{\ensuremath \partial}}

\newcommand{\euler}{\chi}

\newcommand{\p}{\textup{\textsf{p}}}
\newcommand{\dist}{\textup{\textsf{d}}}

\newcommand{\girth}{\text{girth}}

\newcommand{\elev}{\breve}

\newcommand{\T}{T}
\newcommand{\X}{\elev X}

\setlength{\textwidth}{5,8in}
\setlength{\textheight}{8.05in}
\hoffset=-.5in
\voffset=.35 in
\begin{document}

\title{Bicollapsibility and groups with torsion}

\author[J.~B.~Gaster]{Jonah Gaster}
\author[D.~T.~Wise]{Daniel T. Wise}
	\address{Dept. of Math. \& Stats.\\
                    McGill University \\
                    Montreal, Quebec, Canada H3A 0B9}
           \email{jbgaster@gmail.com}
           \email{wise@math.mcgill.ca}

\subjclass[2010]{20F67, 20E08, 20F06}
\keywords{One relator groups, Dehn presentations}
\date{\today}
\thanks{Research supported by a CRM-ISM postdoctoral fellowship and NSERC}

\begin{abstract}
We introduce the notion of a \emph{bicollapsible} 2-complex.
This allows us to generalize the hyperbolicity of one-relator groups with torsion to a broader class of groups 
with presentations whose relators are proper powers. 
We also prove that many such groups act properly and cocompactly on a CAT(0) cube complex.
\end{abstract}

\maketitle

\section{Introduction}
A one-relator group has a presentation $\langle a,b\mid w\rangle$ with a single relator.
A surprising point in combinatorial group theory is that one-relator groups are easier
to understand when the relator is a proper power.
In particular, 
Newman proved that when $n\geq 2$, the one-relator group $\langle a,b\mid w^n\rangle$ has an easily solved word problem  \cite{Newman68,LS77}.
In modern terms,  it is a Dehn presentation for a word-hyperbolic group.

A 2-complex $X$ is \emph{bicollapsible} if for each combinatorial immersion
$Y\rightarrow X$ with $Y$ compact and 
simply-connected, either $Y$ is a tree, or a single 2-cell with a free face, or $Y$ collapses along free faces of at least two distinct cells.
This generalizes  an idea of Howie, who showed this property holds when $X$ is the 2-complex associated to a one-relator group, or more generally, a \emph{staggered} 2-complex \cite{Howie87}.
Recently, staggered 2-complexes have been generalized in two ways:
 \emph{bislim} 2-complexes \cite{HelferWise2015} and 2-complexes with a \emph{good stacking} \cite{LouderWilton2014}. 
We prove bicollapsibility for these complexes in Lemma~\ref{lem:bi bi}.
We say a presentation is bicollapsible if its associated 2-complex is.

It appears that bicollapsibility is quite common in geometric group theory.
For instance, we prove bicollapsibility for 
a nonpositively curved 2-complex in Proposition~\ref{prop:CAT0 bi},
and for a small-cancellation complex in Proposition~\ref{prop:small cancellation bicollapse}.

Following \cite{HruskaWise-Torsion},
we obtain the following generalization of
Newman's  theorem,
which is proven in the more general context of
Theorem~\ref{thm:bicollapsible branched is dehn}.

\begin{thm}\label{thm:newman generalization}
Suppose the presentation $\langle a_1,\ldots, a_r \mid w_1,\ldots, w_s\rangle$
is bicollapsible.
Then $\langle a_1,\ldots, a_r \mid w_1^{n_1},\ldots, w_s^{n_s}\rangle$
is a Dehn presentation when each $n_i\geq 2$.
\end{thm}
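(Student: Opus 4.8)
The plan is to realize this statement as a special case of the branched result Theorem~\ref{thm:bicollapsible branched is dehn}. Let $X$ be the $2$-complex of $\langle a_1,\ldots,a_r\mid w_1,\ldots,w_s\rangle$, which is bicollapsible by hypothesis, and let $\hat X$ be the $2$-complex of $\langle a_1,\ldots,a_r\mid w_1^{n_1},\ldots,w_s^{n_s}\rangle$. One exhibits $\hat X$ as a \emph{branched complex over} $X$: subdivide the $2$-cell attached along $w_j^{n_j}$ by inserting a branch point at its center together with $n_j$ spokes to the $n_j$ vertices where consecutive copies of $w_j$ abut, so that this cell becomes a fan of $n_j$ \emph{petals}, each a disk carrying a single copy of $w_j$ and hence mapping to the $w_j$-cell of $X$. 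Bicollapsibility of $X$ is exactly the input required by Theorem~\ref{thm:bicollapsible branched is dehn}, which then yields that the presentation of $\hat X$ is a Dehn presentation. Modulo that theorem there is nothing more to do; the remainder of this sketch indicates how it is proved in the present situation, which is the real content.

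Given a nontrivial reduced word $u$ with $u=1$ in $\pi_1\hat X$, a standard reduction (peeling off a leading syllable together with its inverse, and inducting on $|u|$) lets us assume $u$ is cyclically reduced. Choose a van Kampen diagram $D$ for $u$ over $\langle a_i\mid w_j^{n_j}\rangle$ that is reduced (no two $2$-cells cancel along a common edge) and has a minimal number of $2$-cells; since $u\ne 1$ in the free group, $D$ has at least one $2$-cell. Applying the fan subdivision to every $2$-cell of $D$ produces a branched diagram $\tilde D$ whose $2$-cells are petals. After deleting any cancellable pair of petals sharing a $w_j$-arc — which does not increase, hence by minimality does not change, the area of $D$ — the map $\tilde D\to X$ is a combinatorial immersion of a compact, simply-connected (branched) $2$-complex. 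Because each $2$-cell of $D$ splits into $n_j\ge 2$ petals, $\tilde D$ is not a single cell with a free face; and it is not a tree, since $D$ has a $2$-cell. Bicollapsibility of $X$ therefore forces $\tilde D$ to collapse along free faces of at least two distinct petals.

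It remains to convert a collapse into the Dehn inequality. A free face of a petal $\pi$ is a $w_j$-arc of $\pi$ lying on $\partial\tilde D$; tracing $\pi$ back to the $2$-cell $R$ of $D$ it came from and using the cyclic fan structure of $R$, one checks that collapsing $\pi$ exposes a spoke of $R$ as a free face of the adjacent petal, and that iterating this collapse around $R$ peels off all but at most one of its $n_j$ copies of $w_j$. Hence $R$ meets $\partial D$ in a boundary arc of length more than $\bigl(1-\tfrac1{n_j}\bigr)|w_j^{n_j}|\ge\tfrac12|w_j^{n_j}|$, so $u$ contains a subword that is more than half of a cyclic conjugate of $w_j^{\pm n_j}$, which is precisely the Dehn condition. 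The hypothesis that there are \emph{two} distinct such petals is what upgrades the conclusion from ``some diagram displays a relator'' to ``every null-homotopic word does'', so that Dehn's algorithm terminates on all inputs.

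I expect the main obstacle to be the interface between the branched-diagram picture and the plain notion of combinatorial immersion used in bicollapsibility: one must verify that the fan subdivision followed by deletion of cancellable petal pairs genuinely produces a complex immersing into $X$ — in particular, local injectivity at the branch points, where reducedness and minimality of $D$ must be used carefully — and then that a single free petal propagates, via the cyclic structure of $R$, to all but one copy of $w_j$ on the boundary. This is the combinatorial heart of Theorem~\ref{thm:bicollapsible branched is dehn}, generalizing the curvature/weight count in Newman's original argument; everything else is bookkeeping with van Kampen diagrams.
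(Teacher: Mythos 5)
Your overall framing is right: the theorem is indeed deduced from Theorem~\ref{thm:bicollapsible branched is dehn}, and your sketch of that theorem has the correct flavor (reduced minimal diagrams, collapses forced by bicollapsibility, long exposed arcs \`a la Newman). But the sketch has a genuine gap at its central step. You subdivide each $2$-cell of $D$ into petals, delete cancellable petal pairs, and assert that the result $\tilde D\to X$ is a combinatorial immersion, so that bicollapsibility applies to $\tilde D$ itself. This is not justified: even after removing cancellable pairs along $w_j$-arcs, local injectivity fails in general at $0$-cells (and can fail at the branch points), so $\tilde D\to X$ is at best a near-immersion; moreover, deleting two petals is not a legal van Kampen diagram move (it does not produce a diagram over the branched presentation with the same boundary word), so ``by minimality of $D$'' does not license it. The paper avoids exactly this problem by never applying bicollapsibility to the diagram: it composes $D\to \dot X\to X$ and passes to a \emph{maximal tower lift} $D\to T$ (Lemma~\ref{lem:towers exist}); the tower $T\to X$ is an immersion by construction, is compact with $\pi_1T=1$, and bicollapsibility is applied to $T$, after which the two collapses of $T$ are pulled back to features of $D$ (Lemma~\ref{lem:bicollapse dehn}). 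Your argument needs either this tower step or an explicit folding argument in its place; as written, the hypothesis of bicollapsibility is invoked for a map that need not be an immersion.

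There is a second, related gap in the conversion of a collapse into the Dehn inequality. A collapse of a $2$-cell of $T$ induces, for the corresponding $2$-cell $R$ of $D$, only the statement that all copies of the free edge lie on $\boundary D$; it does \emph{not} follow that $R$ meets $\boundary_\p D$ in a single long arc. The exposed portion may be disconnected, i.e.\ $R$ may be a cutcell rather than a shell, so at this stage one only obtains the (strong) \emph{generalized} Dehn property. The paper then needs the outermost-lobe argument of Lemma~\ref{lem:complex dehn property} and Corollary~\ref{hyperbolic} to upgrade ``two spurs/shells/cutcells'' to genuine spurs or shells, which is what the Dehn presentation conclusion requires. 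Your step ``hence $R$ meets $\boundary D$ in a boundary arc of length more than $(1-\tfrac1{n_j})|w_j^{n_j}|$'' silently assumes the shell case and skips this reduction; you should either rule out cutcells (you cannot, in general) or add the lobe argument.
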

This theorem resolves the first half of \cite[Conjecture 5.3]{LouderWilton2018}: In view of Remark~\ref{rem:good stacking is bislim}, for any compact 2-complex that has a good stacking, the associated ``branched  2-complex'' has hyperbolic $\pi_1$.

Following the work of \cite{LauerWise07} on one-relator groups with torsion, we find that there is a naturally associated wallspace structure for bicollapsible groups with torsion:

\begin{thm}\label{thm:Wallspace bicollapsible}
Suppose the presentation $\langle a_1,\ldots, a_r \mid w_1,\ldots, w_s\rangle$
is bicollapsible and each $n_i\geq 2$.
Then the universal cover of $\langle a_1,\ldots, a_r \mid w_1^{n_1},\ldots, w_s^{n_s}\rangle$
has a wallspace structure. Each wall stabilizer is virtually free.
Moreover, if the original group is infinite,
then the new group has a codimension-1 subgroup.
\end{thm}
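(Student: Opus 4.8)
The plan is to import into the bicollapsible setting the hypergraph wallspace construction of Lauer and Wise for one-relator groups with torsion \cite{LauerWise07} (see also \cite{HruskaWise-Torsion}), using bicollapsibility in place of the staggering hypothesis. Write $G = \langle a_1,\dots,a_r \mid w_1^{n_1},\dots,w_s^{n_s} \rangle$ and let $\widetilde X$ be the universal cover of its branched $2$-complex $\widehat X$, as in Theorem~\ref{thm:bicollapsible branched is dehn}; then $G$ acts on $\widetilde X$ with finitely many orbits of cells. Each $2$-cell of $\widetilde X$ is an $n_i\size{w_i}$-gon attached along $w_i^{n_i}$, carrying a combinatorial rotation of order $n_i$ whose fixed point is a branch point; $G$ acts freely away from branch points, while the stabilizer of a branch point is finite cyclic. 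A \emph{hypergraph} $\Lambda$ is assembled as in \cite{LauerWise07}: in each $2$-cell one prescribes a $\integers/n_i$-invariant system of \emph{hypergraph pieces} --- arcs through the branch point joining midpoints of boundary $1$-cells --- and glues them along common midpoints of $1$-cells; the connected components are graphs permuted by $G$ with finitely many orbits.

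The crux is the structural statement that \emph{each hypergraph $\Lambda$ is an embedded tree, is $2$-sided, and separates $\widetilde X$ into two halfspaces}; this is where bicollapsibility is used. Granting it, each $\Lambda$ partitions the $0$-cells of $\widetilde X$ into two halfspaces, and these partitions form a wallspace, because any path between two $0$-cells is compact and so crosses only finitely many hypergraph pieces, whence only finitely many walls separate any given pair of $0$-cells. To prove the structural statement one argues by contradiction, as in \cite{LauerWise07}: an embedded cycle in $\Lambda$, a self-crossing of $\Lambda$, or a failure of $2$-sidedness is carried by a combinatorial immersion $Y \to \widetilde X$ of a compact simply-connected $2$-complex $Y$, chosen with the fewest $2$-cells. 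Bicollapsibility applied to $Y \to \widetilde X$ says $Y$ is a tree, a single $2$-cell with a free face, or collapses along free faces of at least two distinct $2$-cells; the first case is impossible since $Y$ supports a hypergraph configuration and hence contains a $2$-cell. In the last case an innermost collapsible $2$-cell $R$, with a free boundary $1$-cell $e$, forces the hypergraph pieces of $R$ to backtrack across $e$, so the corresponding arcs of $\Lambda$ can be pushed off $R$ through $e$; this produces a carrier with fewer $2$-cells and contradicts minimality. The single-$2$-cell case and the configurations confined to one $2$-cell are handled directly. I expect this minimal-carrier analysis to be the main obstacle: one must set up the carriers so that the ``compact, simply-connected, immersed'' hypotheses of bicollapsibility genuinely apply --- in particular accommodating the branched $2$-cells --- and verify that an innermost collapsible $2$-cell always yields the claimed reduction; the availability of \emph{two} collapsible $2$-cells is what makes the reduction robust, just as two shells power Dehn's algorithm.

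Given the structural statement, the wall stabilizers are virtually free. For a hypergraph $\Lambda$, the subgroup $\stab(\Lambda) \le G$ acts on the tree $\Lambda$, and cocompactly, since $G$ acts cocompactly on $\widetilde X$ with only finitely many orbits of hypergraphs. The $G$-stabilizer of a vertex of $\Lambda$ at the midpoint of a $1$-cell is trivial, and the $G$-stabilizer of a vertex of $\Lambda$ at a branch point is finite cyclic, so edge stabilizers are finite as well. Hence $\stab(\Lambda)$ is the fundamental group of a finite graph of finite groups, and is therefore virtually free.

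Finally, suppose $H = \langle a_1,\dots,a_r \mid w_1,\dots,w_s \rangle$ is infinite. By Theorem~\ref{thm:bicollapsible branched is dehn} (equivalently Theorem~\ref{thm:newman generalization}) the presentation with powers is Dehn, so $G$ is word-hyperbolic, and $G$ is infinite because it surjects onto $H$. The combinatorial estimates behind the structural statement also show that each hypergraph $\Lambda$ is quasi-isometrically embedded in $\widetilde X$, so each of its two halfspaces is either bounded or deep. It then suffices to exhibit a wall $\Lambda$ that is deep on both sides: since $\stab(\Lambda)$ acts cocompactly on $\Lambda$ and $\Lambda$ coarsely separates $\widetilde X$ into two deep halfspaces, $\stab(\Lambda)$ is by definition a codimension-$1$ subgroup of $G$. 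Because $G$ is infinite and acts cocompactly on $\widetilde X$ with only finitely many orbits of walls, some wall must be deep on both sides; a compactness argument as in \cite{LauerWise07} exhibits one, completing the plan.
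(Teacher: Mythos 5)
Your overall strategy coincides with the paper's (Lauer--Wise style walls built from $\mathbb{Z}_{n_i}$-invariant ``spiders'' through the branch points, wall stabilizers acting on trees with finite point stabilizers, codimension-1 via a wall crossed once by a deep geodesic), but the crux step has a genuine gap. You apply bicollapsibility to a minimal carrier $Y$ immersed in the universal cover of the \emph{branched} complex. Bicollapsibility is a hypothesis on the unbranched presentation complex $X$ only, and the branched complex $\dot X$ is essentially never bicollapsible: each of its 2-cells is attached along a proper power $w_i^{n_i}$, so every boundary edge is traversed at least $n_i\ge 2$ times and no 2-cell of $\widetilde{\dot X}$ has a free face at all. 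Consequently the trichotomy ``tree / single 2-cell with free face / two collapses'' is not available for your carrier, and the proposed reduction (``push the hypergraph arcs off an innermost collapsible 2-cell through its free face'') cannot even begin. You flag ``accommodating the branched 2-cells'' as the main obstacle, but resolving it is precisely the technical content of the paper: one composes with $\dot X\to X$, takes a maximal tower lift (Lemma~\ref{lem:towers exist}) landing over $X$, and transfers the two collapses there into shells and spurs of the branched carrier via Lemma~\ref{lem:complex dehn property}, Corollary~\ref{cor:2-cells embed in Xtilde} and Corollary~\ref{cor:shells or spurs in a complex}, with the sharpened innerpath bound $|S|<\tfrac1n|\boundary_\p R|$ of Remark~\ref{rem:tiny innerpath}. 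The paper's Lemma~\ref{bicollapsible implies tree} then runs the minimality argument not on an abstract ``hypergraph configuration'' but on a shortest essential closed path $\tau$ in $\Gamma$, carried by an annular diagram capped off by a reduced disk diagram, and rules out every outcome of Corollary~\ref{cor:shells or spurs in a complex} by counting shells and spurs. None of this machinery is supplied or replaced by your sketch.

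The final step also drifts from what the theorem needs. You assume walls are quasi-isometrically embedded and that infiniteness of $G$ plus finitely many wall orbits forces some wall to be deep on both sides; neither claim is established by your structural statement, and the paper does not prove (or need) quasiconvexity of walls in this generality. Instead the paper uses the hypothesis exactly as stated -- the \emph{original} group $\pi_1X$ is infinite -- to extract a bi-infinite geodesic in $\widetilde X$ by K\"onig's lemma, lifts it to the branched cover where it remains geodesic, and shows it meets one specific natural wall $W$ exactly once while travelling arbitrarily far into both halfspaces (because its projection leaves every neighborhood of the projected tree $S(e)$); the codimension-1 subgroup is then the stabilizer of $W$ inside the stabilizer of the divisive tree. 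Your virtually-free argument for wall stabilizers is fine and matches the intended (implicit) reasoning, but the tree lemma it depends on, and the codimension-1 construction, are the parts your proposal leaves unproven.
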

\noindent Recall that a subgroup $H$ of a finitely generated group $G$ is \emph{codimension-1} if the quotient of the Cayley graph of $G$ by $H$ has more than one end.

The walls we use are analogous to those employed by Lauer-Wise who cubulate one-relator groups with torsion when $n\geq 4$  \cite{LauerWise07}. 
One-relator groups with torsion were proven to act properly and cocompactly later in \cite{WiseIsraelHierarchy} using a far less natural method  depending on the Magnus hierarchy and additional methods outside the theory of one-relator groups.
We were unable to determine whether Theorem~\ref{thm:Wallspace bicollapsible} can be strengthened to assert that there is a proper action
on the associated dual cube complex.

We now describe a cubulation result that requires a slight strengthening of bicollapsibility.
Let $X$ be a 2-complex with embedded 2-cells whose boundary maps are pairwise distinct. We say $X$ is \emph{$n$-collapsing} if each compact subcomplex $Y\subset \widetilde X$ of the universal cover having $m\leq n$ cells has at least $m$ collapses along free faces. (See Definition~\ref{def:n-collapsing} for the more general case.)
We emphasize that our methods in the cubulation theorem below improve considerably on the arguments of \cite{LauerWise07} and their reach.

\begin{thm}\label{thm:3collapsing cubulated}
Suppose the presentation $\langle a_1,\ldots, a_r \mid w_1,\ldots, w_s\rangle$
is $3$-collapsing. 
Then the universal cover of the 2-complex associated to $\langle a_1,\ldots, a_r \mid w_1^{n_1},\ldots, w_s^{n_s}\rangle$
has a wallspace structure
when each $n_i\geq 2$. Moreover,
the group
$\langle a_1,\ldots, a_r \mid w_1^{n_1},\ldots, w_s^{n_s}\rangle$
acts properly and cocompactly on the associated dual CAT(0) cube complex.
\end{thm}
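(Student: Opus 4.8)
The plan is to feed the wallspace of Theorem~\ref{thm:Wallspace bicollapsible} into Sageev's construction and then verify the two properties --- cocompactness and properness --- of the resulting action. Write $X$ for the $2$-complex of $\langle a_1,\dots,a_r\mid w_1^{n_1},\dots,w_s^{n_s}\rangle$, let $\widetilde X$ be its universal cover, and let $G$ be the group. Since a $3$-collapsing presentation is bicollapsible, Theorem~\ref{thm:Wallspace bicollapsible} provides a $G$-invariant wallspace structure $\mathcal W$ on $\widetilde X$ with virtually free wall stabilizers, and Sageev's construction produces the CAT(0) cube complex $C=C(\widetilde X,\mathcal W)$ with its $G$-action. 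As $X$ is compact, $\widetilde X$ is locally finite and $G$ acts on it properly and cocompactly, so it suffices to establish (i) that $\mathcal W$ has finitely many $G$-orbits of walls and is locally finite (every compact subcomplex of $\widetilde X$ meets finitely many walls) and that $C$ is finite dimensional --- whence, as in \cite{LauerWise07}, the action on $C$ is cocompact --- and (ii) that the canonical $0$-cell map $u\mapsto\widehat u$, which orients each wall toward the half containing $u$, is a quasi-isometric embedding $\widetilde X^{(0)}\hookrightarrow C$ --- whence, being $G$-equivariant, the action on $C$ is proper. Since $\dist_C(\widehat u,\widehat v)$ equals the number of walls separating $u$ from $v$, statement (ii) is exactly a linear lower bound on that number.

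For (i): every wall is the hypergraph generated by a single hyperpiece --- an arc crossing one $2$-cell and joining the midpoints of two of its edges --- and $G$ has finitely many orbits of $2$-cells, hence of hyperpieces, while a wall is recovered from any one of its hyperpieces; so there are finitely many $G$-orbits of walls, and local finiteness follows since $\widetilde X$ is locally finite and each $2$-cell is crossed by finitely many hyperpieces. For finite dimensionality of $C$: $G$ is word-hyperbolic by Theorem~\ref{thm:newman generalization}, and (as in \cite{LauerWise07}) geodesics in the hypergraphs map to uniform quasigeodesics of $\widetilde X$ --- a hypergraph geodesic cannot backtrack without producing a short disc diagram of the kind forbidden below --- so the walls are uniformly quasiconvex and $\mathcal W$ has bounded packing, which bounds the size of any family of pairwise-crossing walls.

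The core of the argument, and the one place $3$-collapsing is used, is the following no-bigon statement, from which (ii) follows with constant $1$: \emph{no wall meets a geodesic edge-path of $\widetilde X$ in more than one edge.} If a wall $W$ met a geodesic $\gamma$ in edges $e_i\ne e_j$, then --- passing to an innermost such pair --- a sub-arc of $W$ from the midpoint of $e_i$ to the midpoint of $e_j$ with interior disjoint from $\gamma$, together with the sub-path of $\gamma$ between $e_i$ and $e_j$ and the carrier (a ladder of $2$-cells) of that $W$-arc, would bound a reduced disc diagram $D\to\widetilde X$ with a geodesic sub-path of $\gamma$ along part of $\partial D$. Since $\langle a_i\mid w_j^{n_j}\rangle$ is a Dehn presentation (Theorem~\ref{thm:newman generalization}), a nonempty reduced $D$ contains a $2$-cell exposing more than half of its boundary on $\partial D$; this shell can lie neither along the geodesic portion of $\partial D$ (a shortcut) nor, since the $W$-arc is a uniform quasigeodesic, along a long stretch of the carrier, forcing $D$ --- and hence its image $Y\subset\widetilde X$ --- to have at most three $2$-cells. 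The $3$-collapsing hypothesis (Definition~\ref{def:n-collapsing}) then forces $Y$ to collapse entirely along free faces, which is impossible: collapsing away the carrier would push the $W$-arc onto a sub-path of $\gamma$, whereas a hypergraph arc is not carried by the $1$-skeleton. This contradiction proves the no-bigon statement, whence along a geodesic $\gamma$ from $u$ to $v$ the $\dist_{\widetilde X}(u,v)$ walls through the edges of $\gamma$ are pairwise distinct, each separates $u$ from $v$, and every separating wall arises this way; thus $\dist_C(\widehat u,\widehat v)=\dist_{\widetilde X}(u,v)$, which is (ii).

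I expect the real difficulty to lie in the no-bigon statement: reducing the reduced disc diagram $D$ to a sharp finite list of small configurations and then checking that $3$-collapsing defeats each (the number $3$ should be exactly the threshold below which bigon-type subcomplexes are forced to be collapsible, in line with ``a slight strengthening of bicollapsibility''). The point is that bicollapsibility only guarantees that at least two cells of a small subcomplex collapse, possibly leaving an uncollapsible core that survives the argument, whereas $3$-collapsing forces a subcomplex with at most three cells to collapse \emph{completely}; this completeness is exactly what closes the bigon argument, and it is why Theorem~\ref{thm:Wallspace bicollapsible} could not be upgraded to assert a proper action.
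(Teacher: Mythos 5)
Your overall architecture (natural wallspace, Sageev dual, then verify properness and cocompactness) matches the paper, but the step you yourself identify as the core --- the ``no-bigon'' claim that no wall crosses a geodesic of $\X$ in more than one edge --- is a genuine gap, and it is not what the paper proves. The paper's proof explicitly allows a wall $W$ to cross a geodesic $\gamma$ twice; its entire case analysis (on the length of the arc $J_W\subset W$ between the two crossings, with the three cases $|J_W|\ge 3$, $=1$, $=2$ and the ten subcases of Figure~\ref{fig:ThirteenthImam}) exists precisely to handle this, by producing a \emph{different} wall $V$ (or $U$) that crosses $\gamma$ exactly once. It then feeds this into the cut-axis properness criterion (Proposition~\ref{prop:cut axis}), uses hyperbolicity to conclude stabilizers of $0$-cubes are finite, and gets properness as cocompactness plus finite stabilizers; cocompactness comes from Proposition~\ref{prop:cocompactness} together with convexity of wall carriers (Lemma~\ref{lem:convex carriers}). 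Your claimed conclusion $\dist_C(\widehat u,\widehat v)=\dist_{\widetilde X}(u,v)$ is far stronger than anything established in the paper, and your sketch of it does not hold up: the assertion that the shell cannot lie ``along a long stretch of the carrier, since the $W$-arc is a uniform quasigeodesic'' presupposes quasiconvexity of walls, which is exactly what is at stake (in the paper it is derived from Lemma~\ref{lem:M(J) convex}, which in turn rests on the ladder trichotomy of Theorem~\ref{thm:123}); and the inference ``hence $D$ has at most three $2$-cells'' does not follow.

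There are also two misreadings of the hypothesis that undermine your closing paragraph. First, $3$-collapsing (Definition~\ref{def:n-collapsing}) asserts that a subcomplex with $m\le 3$ two-cells admits at least $m$ distinct collapses along free faces; it does \emph{not} say such a subcomplex ``collapses completely,'' so your claimed contradiction (``collapsing away the carrier would push the $W$-arc onto $\gamma$'') has no basis. Second, $3$-collapsing is a condition on compact subcomplexes of $\underline{\widetilde X}$, i.e.\ of the \emph{unbranched} universal cover, whereas your diagrams live over $\X$; the bridge between the two is exactly what the paper supplies via Theorem~\ref{thm:123} (every compact contractible near-immersion to $\X$ is trivial, a ladder, or has three shells/spurs), together with Lemma~\ref{lem:2cell intersections}, the convexity of $M(J)$, Corollary~\ref{cor:braided bigons}, and Lemma~\ref{lem:equisector}. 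Without these intermediate results, and with the no-bigon claim unproven (and most likely false, as the whole structure of the paper's argument suggests), the proposal does not establish either properness or cocompactness.
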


We describe several classes of 2-complexes that are 3-collapsing (see Proposition~\ref{prop:C(6) is 3-collapsing} and Proposition~\ref{CAT(0) is 3-collapsing}).

\section{Dehn Presentations}

A disk diagram $D$ is a compact contractible 2-complex with a chosen embedding 
$D^2\subset S^2$ in the 2-sphere.
We use the notation $\boundary_\p D$ for the \emph{boundary path} of $D$,
which can be regarded as the attaching map of a 2-cell $R_\infty$ such that
$D\cup R_\infty=S^2$.
We also use the notation $\boundary_\p R$ for the boundary path of a 2-cell.
We use the usual notation $\boundary R$ and $\boundary D$ for the topological boundary
as a subspace.

Let $X$ be a 2-complex.
A \emph{disk diagram in $X$} is a combinatorial map $D\rightarrow X$
where $D$ is a disk diagram. 
It is a classical fact, first observed by Van Kampen \cite{LS77}, that  a combinatorial path $P\rightarrow X$ is null-homotopic if and only if
there exists a disk diagram in $X$ whose boundary path is $P$,
so that the map $P\rightarrow X$ factors as $P=\boundary_\p D \rightarrow D \rightarrow X$. 
We say $D\rightarrow X$ is \emph{minimal} 
if $\area(D')\geq \area(D)$ for every disk diagram $D'\rightarrow X$ with $\boundary_\p D'=\boundary_\p D$. 
Here $\area(D)$ denotes the number of 2-cells in $D$.

A \emph{spur} in $D$ is a valence $1$ vertex in $\boundary D$.
 Note that $\boundary_\p D$ has a backtrack of the form
$ee^{-1}$ at the spur.

A \emph{shell} is a 2-cell $R$ in $D$ such that $\boundary_\p R=QS$ where
 $|Q|>|S|$ and where $Q$ is a subpath of $\boundary_\p D$.

A \emph{cutcell} is a 2-cell $R$, such that $D-\text{cl}(R)$ has more than one component. Equivalently, the preimage of $\boundary R$ in $\boundary_\p D$ consists of more than one component. [In a strong form of this notion, we require that each of these components is a nontrivial path.]

\begin{defn}
$X$ has the \emph{$[$strong$]$ generalized Dehn property}
if the following holds:
Each minimal disk diagram $D\rightarrow X$ has one of the following properties:
\begin{enumerate}
\item $D$ consists of a single 0-cell, 1-cell, or 2-cell.
\item The total number of spurs, shells, and cutcells in $D$ is at least one [two].
\end{enumerate}
\end{defn}

\begin{rem}
$X$ has the \emph{Dehn property} if each minimal disk diagram
is either a 0-cell or 2-cell, or has a spur, or has a shell.
It is a classical fact, that if $X$ is compact and has the Dehn property,
then $X$ has a linear isoperimetric function. 
See e.g.\ \cite{ABC91}.
\end{rem}

An initial version of this text contained the following: 

\begin{conj}
Let $X$ be a compact 2-complex
with the generalized Dehn property.
Then $\widetilde X$ has a linear isoperimetric function.
\end{conj}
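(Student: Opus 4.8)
The plan is to bound the area of a minimal disk diagram $D\rightarrow\widetilde X$ linearly in $|\boundary_\p D|$, by induction on $\area(D)$. Write $M$ for the maximum length of an attaching map of a $2$-cell of $X$, which is finite since $X$ is compact. I would aim at a statement of the shape $\area(D)\le C\,|\boundary_\p D|+C'$ with $C,C'$ depending only on $M$; this is a linear isoperimetric function for $\widetilde X$. If $D$ is a single $0$-, $1$-, or $2$-cell, the bound is immediate once $C'\ge M$. Otherwise the generalized Dehn property supplies a spur, a shell, or a cutcell. Spurs cause no trouble: deleting a spur yields a minimal diagram of the same area whose boundary is shorter by $2$, so after deleting all spurs we may assume $D$ has none, and then, if $\area(D)\ge 2$, it has a shell or a cutcell. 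A shell $R$, with $\boundary_\p R=QS$, $|Q|>|S|$, $Q\subseteq\boundary_\p D$, is handled classically: excise $R$ to obtain $D^-$ with $\area(D^-)=\area(D)-1$ and $|\boundary_\p D^-|\le|\boundary_\p D|-1$, replace $D^-$ by a minimal diagram with the same boundary, apply the induction hypothesis, reglue $R$, and invoke minimality of $D$. This step closes once $C\ge 1$.

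The cutcell case is the crux. Let $R$ be a cutcell of the spurless minimal diagram $D$, so $D-\text{cl}(R)$ has $m\ge 2$ components. Cutting $D$ along $R$ exhibits $D$ as the union of $R$ with disk subdiagrams $D_1,\dots,D_m$; in the usual analysis of how $\boundary_\p D$ meets $\boundary R$ one writes $\boundary_\p D=A_1B_1\cdots A_mB_m$ and $\boundary R=A_1C_1\cdots A_mC_m$ cyclically, where the $A_i$ are the arcs of $\boundary R$ on $\boundary_\p D$, the $B_i$ are the intervening excursions of $\boundary_\p D$, and $\boundary_\p D_i$ is, up to reindexing, a concatenation of one $B_i$ and one $C_i$. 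Spurlessness of $D$ forces each component of $D-\text{cl}(R)$ to contain a $2$-cell, since a tree component would produce a spur of $D$. Each $D_i$ is a subdiagram of a minimal diagram, hence minimal, with $\area(D_i)<\area(D)$, and
\[
\area(D)=1+\sum_{i=1}^m\area(D_i),\qquad \sum_{i=1}^m|\boundary_\p D_i|=\sum_i|B_i|+\sum_i|C_i|\le|\boundary_\p D|+|\boundary_\p R|\le|\boundary_\p D|+M,
\]
the last estimate using $\sum_i|A_i|+\sum_i|B_i|=|\boundary_\p D|$ and $\sum_i|A_i|+\sum_i|C_i|=|\boundary_\p R|$. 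Applying the induction hypothesis to the $D_i$ and summing gives $\area(D)\le 1+C\bigl(|\boundary_\p D|+M\bigr)+mC'$, in which the term $mC'$ is fatal since $m$ is not bounded. To absorb the overhead I would instead run the induction with the \emph{sharper} bound $\area(D)\le C\,|\boundary_\p D|-C'$ for diagrams above a fixed area threshold (retaining the weak bound below it): then the $m\ge 2$ summands $-C'$ coming from the pieces can repay the $+1$ and the $+CM$, provided $C'\ge CM+1$ and each $D_i$ lies above the threshold.

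The main obstacle is exactly this bookkeeping, and I do not expect it to go through without a new idea. Every application of the cutcell move injects an additive error of size up to $M=|\boundary_\p R|$, and the induction only repays this when each cutcell splits off at least two pieces large enough to carry the sharpened bound; a single $2$-cell has boundary length at most $M$ and can carry no negative slack, and a minimal diagram may a priori contain arbitrarily many cutcells. The dangerous configuration is a ``tree of cells'' — many $2$-cells glued pairwise along long subpaths of their attaching maps, each cell a cutcell of the whole — in which iterated cutting accumulates overhead unbounded in terms of $|\boundary_\p D|$. Excluding such diagrams among minimal ones, equivalently bounding the number of cutcells (or of pieces produced) by a linear function of $|\boundary_\p D|$, is the heart of the matter. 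I would expect that a genuine proof requires either a much finer use of minimality here, or a strengthening of the hypothesis — for instance the \emph{strong} generalized Dehn property, or a bound on the number of components appearing at each cutcell — and that, absent such an input, the conjecture as stated may in fact be false.
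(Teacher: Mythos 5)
You have not proved the statement, and you should not have been able to: this is precisely the conjecture that the paper itself does not prove. The paper records it only as a statement from an earlier draft and immediately notes that Baker--Riley \cite{BakerRiley2019} produced a counterexample, so the generalized Dehn property for a compact $2$-complex does \emph{not} imply a linear isoperimetric function. Your closing suspicion that ``the conjecture as stated may in fact be false'' is therefore exactly right, and your diagnosis of where the induction breaks is the correct one: the cutcell case injects an additive overhead of size up to the perimeter of the cutcell at every split, and nothing in the generalized Dehn property bounds how this overhead accumulates across a diagram built as a ``tree of cells.'' That is the genuine obstruction, not a bookkeeping artifact.

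It is also worth noting that your proposed repair --- running the induction with a sharpened bound carrying a negative additive constant so that each piece repays the cutcell overhead --- is essentially what the paper does in its salvage lemma, but only under the extra hypothesis that every $2$-cell of $X$ has the same perimeter $r$. There the paper proves $\area(D)\le|\partial_\p D|+1-r$ for reduced diagrams with at least one $2$-cell; in the cutcell case the uniform value of $|\partial_\p R|=r$ makes the accounting exact (the sum $\sum_i|\partial_\p D_i|$ counts boundary $1$-cells of $R$ with multiplicities $n$ and $n-1$, and the $n(1-r)$ terms cancel against $(n-1)r$), which is exactly the cancellation your scheme needs and cannot obtain when perimeters vary. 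So your attempt correctly identifies both the fatal gap in the general statement and the shape of the additional hypothesis under which a linear bound does hold.
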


Subsequently, Baker-Riley provided an interesting and elegant counterexample \cite{BakerRiley2019}. Nonetheless, the generalized Dehn property often yields hyperbolicity. For instance, a linear isoperimetric function is ensured under the following additional condition:

\begin{lem}
Suppose that $X$ has the generalized Dehn property, and moreover suppose that every 2-cell of $X$ has perimeter $r$. Then, for any reduced disk diagram $D$ with at least one 2-cell, we have $\area(D) \le | \partial_\p D| +1-r$.
\end{lem}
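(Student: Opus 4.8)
The plan is to argue by induction on $\area(D)$, peeling off one 2-cell at a time using the structure provided by the generalized Dehn property, and tracking how the boundary length changes at each step. The base case is $\area(D)=1$: here $D$ is a single 2-cell (after removing any spurs, which only shortens $\boundary_\p D$ while preserving $\area$), so $|\boundary_\p D|=r$ and the claimed inequality $\area(D)\le |\boundary_\p D|+1-r$ reads $1\le 1$, with equality. For the inductive step, take a reduced $D$ with $\area(D)\ge 2$ and apply the generalized Dehn property. Since $D$ is reduced, we may first assume $D$ has no spurs (removing a spur decreases $|\boundary_\p D|$ by $2$ and does not change $\area$, so it suffices to prove the bound for the spur-free diagram and then note that reintroducing spurs only makes the left side smaller relative to the right — actually the reverse: I should be careful here, and instead observe that since spurs never help, I may reduce to the case where the witnessed feature is a shell or a cutcell).

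The main case is a shell: there is a 2-cell $R$ with $\boundary_\p R = QS$ where $Q$ is a subpath of $\boundary_\p D$ and $|Q|>|S|$. Since every 2-cell has perimeter $r$, we have $|Q|+|S|=r$, hence $|Q|>r/2>|S|$. Removing $R$ from $D$ yields a disk diagram $D'$ with $\area(D')=\area(D)-1$ and $\boundary_\p D'$ obtained from $\boundary_\p D$ by replacing $Q$ with $S^{-1}$; thus $|\boundary_\p D'| = |\boundary_\p D| - |Q| + |S| = |\boundary_\p D| - (|Q|-|S|)$. Note $D'$ may fail to be reduced or may have acquired spurs, but we can pass to a reduced diagram $D''$ with the same boundary path and $\area(D'')\le\area(D')$; if $D''$ has no 2-cells then $D$ had a single 2-cell contributing to a trivial boundary, which is handled directly, and otherwise the inductive hypothesis gives $\area(D'')\le|\boundary_\p D''|+1-r = |\boundary_\p D| - (|Q|-|S|) + 1 - r$. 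Therefore $\area(D) = \area(D')+1 \ge \area(D'')+1$... wait — I need the inequality in the correct direction: $\area(D) = \area(D')+1$ and $\area(D')\ge\area(D'')$ is the wrong way. The correct bookkeeping is $\area(D)=\area(D')+1$ and $\area(D'')\le\area(D')$, combined with IH $\area(D'')\le |\boundary_\p D'|+1-r$; but I need to bound $\area(D')$, not $\area(D'')$. The resolution is to require $D'$ itself to be reduced — which one can arrange, since reducing $D$ away from $R$ first ensures no reduction pair is created when $R$ is deleted, or alternatively to phrase the induction as: among all reduced diagrams with boundary path $\boundary_\p D$ and area $\ge 1$, the maximum area is at most $|\boundary_\p D|+1-r$. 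Then $\area(D')\le\area(D'') $ is automatic after reducing, and $\area(D)=\area(D')+1\le |\boundary_\p D'|+1-r+1$... which is off by one.

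The clean fix, which I would actually carry out, is to note $|\boundary_\p D| - |\boundary_\p D'| = |Q|-|S| \ge 1$, and indeed since $|Q|+|S|=r$ and $|Q|>|S|$ are integers we get $|Q|-|S|\ge 1$ (and in fact $\ge 2$ if $r$ is even, $\ge 1$ if $r$ is odd — but $|Q|-|S|\equiv r \pmod 2$, so $|Q|-|S|\ge 1$ always suffices). Then $\area(D) = \area(D') + 1 \le \bigl(|\boundary_\p D'| + 1 - r\bigr) + 1 = |\boundary_\p D| - (|Q|-|S|) + 2 - r \le |\boundary_\p D| - 1 + 2 - r = |\boundary_\p D| + 1 - r$, using $|Q|-|S|\ge 1$. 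This closes the shell case. For the cutcell case, $D = D_1 \cup_R D_2$ glued along arcs of $\boundary R$; one splits into two disk diagrams each containing part of $R$ (or distributes $R$), applies the inductive hypothesis to the pieces, and adds the bounds — the boundary lengths add up with the shared portions of $\boundary R$ cancelling exactly, and the arithmetic works out because the "$+1-r$" terms combine favorably when $R$ is counted once; the integrality of the perimeter is again what makes the constants line up. The main obstacle is precisely this bookkeeping in the cutcell case: ensuring the pieces $D_i$ are reduced (or handling the degenerate sub-cases where a piece has no 2-cells), and verifying that splitting the single 2-cell $R$ between the two sides — rather than keeping it whole — gives the right count so that $\area(D_1)+\area(D_2) = \area(D)$ with the perimeter terms adding correctly. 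I expect the shell case to be essentially immediate once the induction is set up as a statement about all reduced diagrams with a fixed boundary path, and the cutcell case to require the bulk of the care.
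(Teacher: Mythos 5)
Your spur reduction and your shell case are, once the hesitations are stripped away, exactly the paper's argument: remove the shell $R$ with $\partial_\p R=QS$, note $|\partial_\p D'|\le|\partial_\p D|-1$ because $|Q|>|S|$ are integers summing to $r$, and apply the inductive bound; the detour about $D'$ possibly failing to be reduced is unnecessary, since a subdiagram of a reduced diagram is reduced. The genuine gap is the cutcell case, which you explicitly leave as ``bookkeeping'' to be done later, and it cannot be skipped: the generalized Dehn property only promises a spur \emph{or} shell \emph{or} cutcell, so a reduced spur-free diagram may have a cutcell and no shell at all. Moreover, the route you sketch --- distribute $R$ between the lobes so that $\area(D_1)+\area(D_2)=\area(D)$ and the shared portions of $\partial R$ ``cancel exactly'' --- does not close as stated: applying the inductive bound to two pieces whose areas add gives at best $|\partial_\p D_1|+|\partial_\p D_2|+2-2r$, and $|\partial_\p D_1|+|\partial_\p D_2|$ exceeds $|\partial_\p D|$ by twice the length of the internal arcs along which $R$ meets the other lobe; nothing bounds those arcs (if the exposed part of $\partial R$ were long compared to them, $R$ would already be a shell), so you cannot conclude $\le|\partial_\p D|+1-r$ this way.

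The paper's device in the cutcell case is the opposite of splitting $R$: every lobe gets a \emph{whole} copy of $R$. With $D_1',\ldots,D_n'$ the closures of the components of $D-\mathrm{cl}(R)$ and $D_i=D_i'\cup R$, one has $\area(D)=1-n+\sum_i\area(D_i)$, and $\sum_i|\partial_\p D_i|=|\partial_\p D|+(n-1)|\partial_\p R|$, since the 1-cells of $\partial R$ lying on $\partial_\p D$ are counted $n$ times and the interior ones $n-1$ times. The overcount $(n-1)r$ then cancels exactly against the $n$ copies of $+1-r$ and the leading $1-n$, giving $\area(D)\le|\partial_\p D|+1-r$ with no estimate on the attaching arcs at all --- this is precisely where the hypothesis that every 2-cell has perimeter $r$ enters. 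Each $D_i$ is a subdiagram of $D$, hence reduced, and has smaller area, so the induction applies. Without this duplication idea (or an equivalent replacement), your proof is incomplete at its hardest step.
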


\begin{proof}
Note that if the claimed inequality holds for diagrams without spurs, then it holds for diagrams with spurs, so it suffices to prove the statement for diagrams without spurs.

We proceed by induction on $\area(D)$.
If $\area(D)=1$, then $|\partial_\p D| \ge r$, and the inequality holds.

If $D$ has a shell $R$ with $\partial_\p R= QS$, 
then let $D'=D-R-Q$, and $\partial_\p D'$ is obtained from $\partial_\p D$ by replacing $Q$ by the shorter path $S$. 
Hence the following holds, where the inductive hypothesis justifies the first inequality and $|\partial_\p D'| \le |\partial_\p D| -1$: 
\[
\area(D)=\area(D')+1\le |\partial_\p D'|+2-r \le |\partial_\p D|+1-r~.
\]

Suppose that $D$ has no spurs. In that case, $D$ has a cutcell $R$.
Let $D_1',\ldots,D_n'$ be the closures of the components of $D-cl(R)$, and let $D_i=D_i'\cup R$. Now $\area(D_i)< \area(D)$ for each $i$, so using the induction hypothesis we have
\begin{align*}
\area(D) &= 1 + \sum_i \area(D_i')  = 1-n + \sum_i \area(D_i) \le  1-n + \sum_i \left( |\partial_\p D_i| + 1-r \right) \\
&  = 1-n + |\partial_\p D| + (n-1) | \partial_\p R| + n(1-r) = | \partial_\p D| +1-r~.
\end{align*}
For the equality at the start of the second line, observe that in $\sum |\partial_\p D_i|$, the 1-cells of $R$ that lie on $\partial_\p D$ are counted $n$ times, while the 1-cells of $R$ that are not on $\partial_\p D$ are counted $n-1$ times. For the final equality, note that $|\partial_\p R|=r$ by assumption.
\end{proof}

 \begin{lem}
 \label{lem:complex dehn property}
 Let $\mathcal C = \{A\rightarrow X\}$ be a collection of maps of compact contractible complexes to $X$
 that has the following closure property:
 If $A\rightarrow X \in \mathcal C$ and $B\subset A$ is a contractible subcomplex,
 then the restriction $B\rightarrow X \in \mathcal C$.

Suppose that $\mathcal C$ has the additional property: For every $A \rightarrow X \in \mathcal C$, 
\begin{enumerate}
\item either $A$ is the closure of a 
single 0-cell or 1-cell, or a single 2-cell that collapses along a free face,
\item or $A$ has at least two 
cells, each of which is a cutcell, shell, or spur.
\end{enumerate}
Then for each $ A\rightarrow X \ \in \mathcal C$,
 either $A$ is a 0-cell or 2-cell that collapses along a free face,
or $A$ has at least two cells, each of which is a shell or a spur.
\end{lem}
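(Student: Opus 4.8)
The plan is to induct on the total number of cells of $A$, using the inductive hypothesis to trade cutcells in for shells and spurs; the closure property of $\mathcal{C}$ is exactly what makes it legitimate to apply the hypothesis to subcomplexes, and throughout one assumes, harmlessly, that attaching maps are reduced. First I would dispose of the degenerate cases permitted by hypothesis~(1): a single $0$-cell is conclusion~(1); a single $2$-cell collapses along a free face by hypothesis, again conclusion~(1); and the closure of a single $1$-cell has two valence-$1$ vertices, hence two spurs, so is conclusion~(2). Then I would run the standard reduction to the spur-free case --- if $A$ has a spur, excise it (a collapse, so the result still lies in $\mathcal{C}$ and has fewer cells) and transport the conclusion back, just as at the start of the proof of the preceding lemma. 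From now on $A$ is spur-free and is none of the complexes just listed.

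Invoking hypothesis~(2), $A$ has at least two cells each a cutcell, shell, or spur; as $A$ is spur-free these are cutcells and shells, and if at least two are shells we are in conclusion~(2). Otherwise $A$ contains a cutcell $R$; let $U_1,\dots,U_n$ (with $n\ge 2$) be the components of $A-\text{cl}(R)$ and set $A_i=\overline{U_i}\cup\text{cl}(R)$. Here I would check three things. Each $A_i$ is a subcomplex of $A$ with strictly fewer cells, since it omits every cell of the nonempty $U_j$ for $j\ne i$. Each $A_i$ is contractible: because $\overline{U_i}\cap\overline{U_j}\subseteq\boundary R$ we have $A_i\cap A_j=\text{cl}(R)$ for $i\ne j$, so collapsing the contractible subcomplex $\text{cl}(R)$ presents $A$ (up to homotopy) as the wedge $\bigvee_i A_i$, of which each $A_i$ is a retract. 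And each $A_i$ is spur-free: a vertex of $A_i$ lies either in $U_i$, where its valence agrees with its valence in $A$ and so is at least $2$, or on $\boundary R$, where the attaching circle of $R$ alone contributes valence at least $2$.

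By closure $A_i\to X\in\mathcal{C}$, so the inductive hypothesis applies to $A_i$. As $A_i$ contains a $2$-cell it is not a $0$-cell, and as $U_i\ne\emptyset$ we have $A_i\ne\text{cl}(R)$, so $A_i$ is not a single $2$-cell; being spur-free, it therefore has at least two cells, each a shell of $A_i$. At most one of these is $R$, so I would pick a shell $c_i\ne R$ of $A_i$, which is then a $2$-cell of $\overline{U_i}$ with $\boundary_\p c_i=Q_iS_i$, $|Q_i|>|S_i|$, and $Q_i$ a subpath of $\boundary_\p A_i$. The key observation is that no $1$-cell of $Q_i$ lies on $\boundary R$: such a $1$-cell would lie on $\boundary c_i$ and on $\boundary R$, hence be a face of the two distinct $2$-cells $c_i$ and $R$ of $A_i$, so have valence at least $2$ in $A_i$ --- contradicting its appearance on $\boundary_\p A_i$. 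Thus every $1$-cell of $Q_i$ lies off $\boundary R$, where $A$ and $A_i$ have identical local structure (the $2$-cells of $A$ not in $A_i$ all lie in the $U_j$, $j\ne i$, which meet $\overline{U_i}$ only along $\boundary R$); so $Q_i$ persists as a subpath of $\boundary_\p A$ and $c_i$ is a shell of $A$. Finally the $c_i$ are pairwise distinct, their interiors lying in the disjoint sets $U_i$, so $A$ has at least $n\ge 2$ shells --- conclusion~(2).

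The step I expect to be the main obstacle is that last transfer: knowing the $1$-cells of $Q_i$ are free faces of $A$ is not literally the same as knowing that the boundary cycle $\boundary_\p A$ runs along them consecutively and without detour, i.e.\ that $Q_i$ is genuinely a \emph{subpath} of $\boundary_\p A$. Making this precise is the bookkeeping of boundary cycles, and it is where the reduction to spur-free complexes --- which removes backtracking detours --- earns its keep. The only other point needing a couple of lines is the contractibility of each $A_i$ via the wedge decomposition above, equivalently a Mayer--Vietoris and van Kampen computation over $A_i\cap A_i'=\text{cl}(R)$ where $A_i'=\text{cl}(R)\cup\bigcup_{j\ne i}\overline{U_j}$.
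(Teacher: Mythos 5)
Your overall strategy is the paper's: reduce to the case of a cutcell $R$, pass to the lobes $A_i$ obtained from the components of $A-\text{cl}(R)$ by adjoining $\text{cl}(R)$, feed the lobes back into the hypothesis via the closure property, and then transfer a shell or spur of each lobe (distinct from $R$, with outer arc avoiding $\partial R$) back to $A$. The paper does this with a smallest counterexample and \emph{outermost} lobes where you do an induction on cell count over all lobes of one cutcell, with a spur-removal preprocessing step; these are cosmetic differences, and your transfer step (no $1$-cell of $Q_i$ lies on $\partial R$) is exactly the point the paper leaves to the figure, handled correctly.

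The genuine gap is your justification that each lobe $A_i$ is contractible, which is what you need in order to apply the closure property and the inductive hypothesis to $A_i$ at all. You argue that $A_i\cap A_j=\text{cl}(R)$ and then ``collapse the contractible subcomplex $\text{cl}(R)$'' to present $A$ as a wedge of the $A_i$. But $\text{cl}(R)$ is the closure of a $2$-cell with an arbitrary attaching map, and the lemma imposes no hypothesis (embedded, immersed, not a proper power) on attaching maps of complexes in $\mathcal C$; for instance an attaching word $a^2$ gives $\text{cl}(R)\cong\mathbb{RP}^2$ and a word $aba^{-1}b^{-1}$ gives a closed surface, neither contractible. Without contractibility of $\text{cl}(R)$ the wedge/quotient argument collapses, and simple-connectivity of the lobes is not formal: van Kampen only gives that the pushout of the $\pi_1(A_i)$ over $\pi_1(\text{cl}(R))$ is trivial, which does not force the factors to be trivial. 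To be fair, the paper's own proof also invokes the hypothesis on (outermost) lobes without verifying they are contractible, so you have put your finger on the right spot --- but your argument does not close it, and in the intended applications (disk diagrams, towers) one closes it by facts about those specific complexes rather than by the wedge trick. A secondary, smaller issue: your blanket assumption that attaching maps are reduced is not in the statement and is genuinely used when you claim the lobes are spur-free (a backtracking attaching circle of $R$ can create a valence-$1$ vertex of $A_i$ on $\partial R$ that is not a spur of $A$); if you keep that assumption you should either justify why it is harmless or restrict the statement accordingly.
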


A \emph{shell} $R$  in a 2-complex $A$
is a 2-cell such that $\boundary R$ contains an arc $Q$ such that 
the interior of $Q$ does not intersect any other cell (besides those in $Q\cup R$)
and the complement $S$ of $Q$ satisfies $|S|<|Q|$.

\begin{proof}

Consider a smallest counterexample $A \rightarrow X \in \mathcal C$.
If $A$ consists of a single 1-cell, then by contractibility $A$ has two spurs. Therefore, we may suppose $A$ has a cutcell, $R$.
The \emph{lobes} of $A$ are the 
complexes obtained from components of $A-\text{cl}(R)$ by adding $R$.

A lobe is \emph{outermost} if it has no cutcell.
Observe that if $A$ has a cutcell then $A$ has at least two outermost lobes.
Each of these must contain a spur or shell (distinct from that cutcell).
See Figure~\ref{fig:ShellsCutsLobesSpurs} for a schematic.
\end{proof}

\begin{figure}
  \centering
  \includegraphics[width=.4\textwidth]{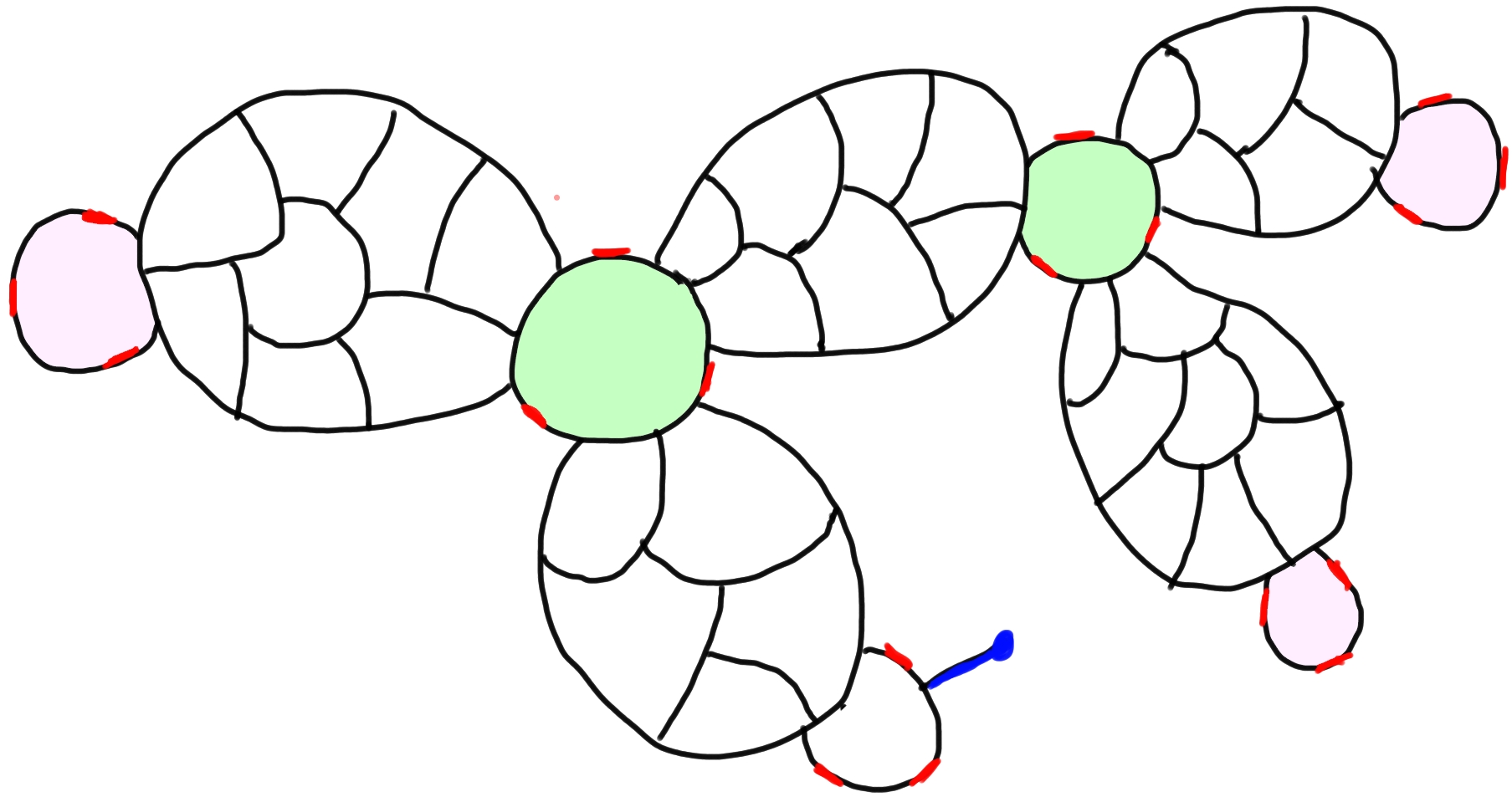}
  \caption{\label{fig:ShellsCutsLobesSpurs} 
  2 cutcells, 3 shells, and a spur.}
\end{figure}

Applying Lemma~\ref{lem:complex dehn property} to disk diagrams, we find immediately:

\begin{cor}\label{hyperbolic}
If $X$ has the strong generalized Dehn property then $X$ has
the Dehn property.
\end{cor}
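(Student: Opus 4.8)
The plan is to derive the corollary as a special case of Lemma~\ref{lem:complex dehn property}. Take $\mathcal C$ to be the family consisting of all minimal disk diagrams $D\rightarrow X$ together with all of their contractible subcomplexes. Any compact contractible subcomplex $B$ of a minimal disk diagram $D$ is itself a disk diagram, since $B\subset D\subset S^2$ inherits an embedding into the $2$-sphere; and $B$ is again minimal, because a strictly smaller disk diagram $B'$ with $\boundary_\p B'=\boundary_\p B$ could be substituted for $B$ inside $D$ to produce a disk diagram with the same boundary path as $D$ but smaller area, contradicting minimality of $D$. By construction $\mathcal C$ is closed under passage to contractible subcomplexes, so it satisfies the closure hypothesis of Lemma~\ref{lem:complex dehn property}.

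Next I would verify the dichotomy hypothesis of Lemma~\ref{lem:complex dehn property} for each $A\rightarrow X$ in $\mathcal C$. If $A$ is a single $0$-cell or $1$-cell we are in case (1). If $A$ is a single $2$-cell, then its attaching map must be embedded --- a $2$-cell with a non-embedded attaching map is not planar and so cannot sit inside $S^2$ --- hence every $1$-cell of $\boundary A$ is a free face and $A$ collapses along it, again case (1). Otherwise $A$ is a minimal disk diagram that is none of these three types, so the strong generalized Dehn property applies and tells us that the total number of spurs, shells, and cutcells of $A$ is at least two. Here one checks that the notions of \emph{shell} and \emph{cutcell} for disk diagrams agree with the notions of shell and cutcell for a general $2$-complex invoked in Lemma~\ref{lem:complex dehn property}, and that a spur is accounted for by the $1$-cell (or terminal $0$-cell) at which it occurs; granting this, $A$ falls under case (2).

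Lemma~\ref{lem:complex dehn property} then yields, for every $A$ in $\mathcal C$ and in particular for every minimal disk diagram $D\rightarrow X$, that $D$ is a single $0$-cell, or a single $2$-cell that collapses along a free face, or else $D$ has at least two cells each of which is a shell or a spur. In every case $D$ is a $0$-cell or a $2$-cell, or $D$ has a spur, or $D$ has a shell. This is precisely the statement that $X$ has the Dehn property.

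The only genuinely non-formal points are the ones flagged in the second paragraph together with the base case of a single $2$-cell: namely, confirming that a contractible subcomplex of a minimal disk diagram really is again a minimal disk diagram (the delicate case being a subcomplex meeting the rest of $D$ along only part of its boundary path, which is handled via reducedness of minimal diagrams), and reconciling the disk-diagram definitions of shell and cutcell with the ambient $2$-complex versions used in the lemma. Once this dictionary is in place, the corollary is an immediate invocation of Lemma~\ref{lem:complex dehn property}.
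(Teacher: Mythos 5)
Your proposal follows the paper's route exactly: the paper's entire proof of this corollary is to apply Lemma~\ref{lem:complex dehn property} to minimal disk diagrams (and their contractible subcomplexes), and your verification of the closure and dichotomy hypotheses is precisely the content the paper leaves implicit. One local justification is incorrect, although the claim it supports survives: a single 2-cell whose attaching map is not embedded can perfectly well sit inside $S^2$ --- for instance a 2-cell attached along $e\,c\,e^{-1}$ (a disk with a whisker) is planar --- so you cannot argue that the attaching map is embedded and that every 1-cell of $\partial A$ is a free face. The correct repair is a traversal count: if the only 2-cell of the diagram is $R$, then an edge traversed twice by $\partial_\p R$ cannot appear in $\partial_\p D$, and since $\partial_\p D$ is nontrivial some edge is traversed exactly once by $\partial_\p R$ and lies on $\partial D$, which is the desired free face. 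With that fix, and granting the points you explicitly flag (that a contractible subcomplex of a minimal disk diagram is again a minimal disk diagram, via the standard cut-and-paste/reducedness argument, and that the disk-diagram and 2-complex notions of shell, spur, and cutcell agree), your argument is the paper's.
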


\section{ Bicollapsible Complexes}
\begin{defn}
\label{def:bicollapsible}
A $2$-complex $X$ is  \emph{bicollapsible}
if for any combinatorial immersion $Y\rightarrow X$ with $Y$ compact and $\pi_1Y=1$,
one of the following holds:
\begin{enumerate}
\item
$Y$ is \emph{trivial} in the sense that $Y$ is the closure of a single 0-cell,
a single 1-cell, or single 2-cell that collapses along a free face.
\item $Y$ has two distinct cells that collapse along free faces.
\end{enumerate}
\end{defn}

\begin{rem}
It is often the case that bicollapsible arises in the following stronger form:
\emph{
If $Y\rightarrow X$ is an immersion with $\pi_1Y=1$ and $Y$ compact, then either
$Y$ consists of a single 0-cell, 1-cell, or 2-cell and is homeomorphic to a disk,
or $Y$ has at least two distinct cells that collapse along free faces.}

In particular, when all attaching maps of 2-cells are immersions,
when there is a single 2-cell but no collapse along any other free face,
the above stronger form implies that $Y^1$ is actually homeomorphic to a circle.

Examples that aren't covered by the above formulation consist of 2-complexes formed by attaching a 2-cell to a unicycle with an overly complicated attaching map so that there is still a collapse,
but so that some edges are traversed more than twice.
\end{rem}

The following provides a quick criterion for bicollapsibility.
\begin{lem}\label{lem:bicollapse criterion} Suppose each finite connected subcomplex of
the universal cover $\widetilde X$ is either a graph,
or is the closure of a single 2-cell $r$ that collapses along a free face, or  
has at least two collapses of 2-cells along free faces.
Then $X$ is bicollapsible.
\end{lem}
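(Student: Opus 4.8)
The plan is to route everything through the universal cover $\widetilde X$. Given a combinatorial immersion $\psi\colon Y\to X$ with $Y$ compact, connected, and $\pi_1Y=1$, I would first lift $\psi$ through $\widetilde X\to X$ to a combinatorial immersion $\widetilde\psi\colon Y\to\widetilde X$ (a lift of an immersion through a covering is again an immersion), and set $Z:=\widetilde\psi(Y)$, a finite connected subcomplex of $\widetilde X$ with $\widetilde\psi\colon Y\twoheadrightarrow Z$ surjective. The hypothesis then places $Z$ in one of three classes, and the whole proof consists of transferring each alternative back along $\widetilde\psi$.

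\emph{Reflecting collapses.} The technical engine I would isolate is that a surjective combinatorial immersion $\widetilde\psi\colon Y\twoheadrightarrow Z$ reflects free faces. If $e$ is a free face of a $2$-cell $R'$ of $Z$ — meaning $e$ occurs exactly once on $\partial_\p R'$ and on no other $2$-cell of $Z$ — then for every $2$-cell $R$ of $Y$ with $\widetilde\psi(R)=R'$ there is a unique edge $\bar e$ of $\partial_\p R$ lying over $e$, it occurs exactly once on $\partial_\p R$ (since $\widetilde\psi$ carries $R$ homeomorphically onto $R'$, matching $\partial_\p R$ with $\partial_\p R'$ edge-for-edge), and $\bar e$ lies on no other $2$-cell of $Y$: a second such $2$-cell would also map to $R'$, forcing the two arcs it and $R$ contribute at the midpoint of $\bar e$ to collide in the link of that point in $Z$, violating injectivity of $\widetilde\psi$. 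Hence $R$ collapses along the free face $\bar e$; moreover $2$-cells of $Y$ lying over distinct $2$-cells of $Z$ are themselves distinct, and since every $2$-cell of $Y$ maps to some $2$-cell of $Z$, if $Z$ is a graph then $Y$ has no $2$-cells.

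\emph{Dispatching the cases.} If $Z$ is a graph then $Y$ is a graph with $\pi_1Y=1$, hence a tree: it is either a single vertex or edge (trivial in the sense of Definition~\ref{def:bicollapsible}), or has at least two leaves, giving two distinct edges collapsing along valence-one endpoints. If $Z$ has at least two $2$-cells collapsing along free faces, then by reflection these lie over at least two distinct $2$-cells of $Y$ collapsing along free faces. The remaining case is $Z=\text{cl}(r)$ with $r$ collapsing along a free face $e$; here every $2$-cell of $Y$ lies over $r$, so if there are at least two, reflection again gives two distinct collapsing $2$-cells, while if there is exactly one, say $R$ with free face $\bar e$, then collapsing $(\bar e,R)$ leaves a compact connected simply-connected complex with no $2$-cells — a tree. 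Thus $Y$ is $\text{cl}(R)$ with some tree material attached to $\partial R$; either $Y=\text{cl}(R)$ is trivial, or the extra material — not all of which can be swallowed by $\partial R$ — supplies a valence-one vertex of $Y^1$ lying entirely outside $\partial R$, whose incident edge has no $2$-cell among its cofaces and so collapses, producing a second collapsing cell disjoint from $\{\bar e,R\}$. In every case $Y$ meets one of the two conditions of Definition~\ref{def:bicollapsible}, so $X$ is bicollapsible.

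\emph{The main obstacle.} I expect the delicate point to be precisely this last subcase. It is not automatic that when $\text{cl}(R)\subsetneq Y$ there is a collapsible cell outside $\{\bar e,R\}$, because $\partial_\p R$ may be badly non-reduced and can run out into — hence absorb — tree edges that then fail to be collapsible $1$-cells, exactly the phenomenon flagged in the Remark after Definition~\ref{def:bicollapsible}. The way I would handle it is the dichotomy above together with a short argument that $\bar e$ (and indeed the entire essential cycle of $Y^1$) lies on $\partial R$: if $\partial R$ exhausts $Y^1$ then $Y=\text{cl}(R)$ is trivial, and otherwise a component of $Y^1\setminus\partial R$ provides a leaf outside $\partial R$. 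A secondary point to pin down carefully is the reflection step itself — that a combinatorial immersion carries each $2$-cell homeomorphically onto a $2$-cell and thereby identifies $\partial_\p R$ with $\partial_\p R'$ letter for letter, and that link-injectivity genuinely forbids two $2$-cells of $Y$ from sharing the preimage of a free face of $Z$.
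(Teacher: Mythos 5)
Your proof is correct and follows essentially the same route as the paper: lift the immersion to $\widetilde X$, apply the hypothesis to the image subcomplex, and pull the resulting collapses back through the immersion, with the single-2-cell case handled by observing that $Y$ is that cell together with trees furnishing spurs. The ``reflection of free faces'' step you isolate is exactly what the paper's proof uses implicitly, so your write-up is just a more detailed version of the same argument.
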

Note that the above condition is equivalent to $2$-collapsing, together with the assumptions that no 2-cell is a proper power and that no $2$-cells have identical attaching maps. See Definition~\ref{def:n-collapsing} for detail.
\begin{proof}For an immersion $Y\rightarrow X$ with $\pi_1Y=1$,
we consider its lift $ Y\rightarrow \widetilde X$.
Let $T=\text{image}( Y\rightarrow \widetilde X)$.
Observe that if $T$ has at least two 2-cells, then the assumption implies that $Y$ must contain two distinct cells that collapse along free faces. Thus we may assume $T$ has at most one 2-cell.

If $T$ is a graph, then either $Y$ is a single 0-cell or 1-cell,
or the immersion $Y\rightarrow T$ shows that $Y$ is a tree with at least 3 vertices,
and so $Y$ has at least two spurs.

Suppose $T$ has a single 2-cell $r$ that collapses along a free face.
If $Y$ has two 2-cells, then they both collapse.
Otherwise, $Y$ has a single $2$-cell $\hat r$ which collapses.
If $Y=\text{cl}(\hat r)$ we are done.
Otherwise, $Y$ is the union of $\hat r$ together with one or more trees,
as $\pi_1Y=1$.
Each tree provides a spur, so there are collapses of two distinct cells.\end{proof}

Consider the following variation on bicollapsibility: 
We say that $X$ is \emph{weakly bicollapsible} when
Definition~\ref{def:bicollapsible} is altered by replacing 
``$\pi_1Y=1$'' with ``$Y$ is contractible''.
We close this section by relating the two notions, together with \emph{diagrammatic reducibility} (i.e.~`DR'), which is defined in Definition~\ref{def:DR}.

\begin{lem}
The following are equivalent:
\begin{enumerate}
\item $X$ is bicollapsible.
\item $X$ is weakly bicollapsible and DR. 
\end{enumerate}
\end{lem}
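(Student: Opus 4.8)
The plan is to prove the two implications of the equivalence separately, using the characterization of DR as the absence of reduced spherical diagrams and the observation that contractibility is strictly stronger than $\pi_1 = 1$ only by virtue of acyclicity in dimension $2$.

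\textbf{From (1) to (2).} First I would observe that bicollapsibility immediately implies weak bicollapsibility, since every contractible $Y$ satisfies $\pi_1 Y = 1$, so the defining dichotomy for bicollapsibility applies verbatim to all contractible $Y$. The substantive point is that bicollapsibility implies DR. Here I would argue by contradiction: suppose $X$ is not DR, so there is a reduced spherical diagram $f : S \to X$ with at least one $2$-cell, $S$ a triangulated $2$-sphere. Pick a $2$-cell $R$ of $S$ and let $Y = S - \interior{R}$; then $Y$ is a disk diagram, hence compact and contractible (so certainly $\pi_1 Y = 1$), and $Y \to X$ need not be an immersion, but after a standard reduction one may pass to a minimal-area spherical diagram, whose complementary disk $Y$ maps to $X$ by an immersion (this is the usual fact that minimal spherical diagrams are reduced and their $1$-skeleta immerse, or one replaces $Y \to X$ by the image in $\widetilde X$ as in the proof of Lemma~\ref{lem:bicollapse criterion}). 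Now apply bicollapsibility to $Y$: either $Y$ is trivial or $Y$ has two distinct cells collapsing along free faces. If $Y$ is trivial it is a single $2$-cell with a free face, forcing $S$ to have only two $2$-cells with identical boundary paths, contradicting reducedness. If $Y$ has a collapse along a free face $e$ of a $2$-cell $R'$, then $e$ lies in $\boundary R'$ and in no other $2$-cell of $Y$; but in $S$ the edge $e$ must lie in a second $2$-cell, which can only be $R$, and then $R$ and $R'$ form a reduction pair in $S$, again contradicting reducedness. Hence $X$ is DR.

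\textbf{From (2) to (1).} Assume $X$ is weakly bicollapsible and DR; I must upgrade the conclusion of weak bicollapsibility from contractible $Y$ to all compact $Y$ with $\pi_1 Y = 1$. Given such a $Y \to X$, I would lift to an immersion $Y \to \widetilde X$ with image $T$, exactly as in the proof of Lemma~\ref{lem:bicollapse criterion}. Since $\pi_1 Y = 1$ and $Y$ is a $2$-complex, $Y$ is contractible if and only if $\homology_2(Y) = 0$. If $Y$ is already contractible, weak bicollapsibility applies directly. Otherwise $\homology_2(Y) \neq 0$, and a nonzero class, realized as a subcomplex, gives a (reduced, after passing to the immersed image $T$ in the simply-connected $\widetilde X$) spherical diagram in $X$ with at least one $2$-cell — contradicting DR. The only care needed is that the spherical diagram extracted from $\homology_2$ is reduced: since $Y \to \widetilde X$ is an immersion and $\widetilde X$ is simply connected, any $2$-sphere in $\widetilde X$ built from $Y$'s cells is reduced (two $2$-cells sharing a free-face-pair with matching labels would, under the immersion, be forced to coincide). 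Thus the case $Y$ not contractible cannot occur, and weak bicollapsibility gives the desired dichotomy for $Y$, proving $X$ is bicollapsible.

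\textbf{Main obstacle.} The delicate step is the passage between an abstract $Y$ with $\pi_1 Y = 1$ and a genuinely reduced spherical diagram witnessing failure of DR: one must be careful that the map is an immersion and that "reduced" is inherited, which is why working inside $\widetilde X$ (as in Lemma~\ref{lem:bicollapse criterion}) rather than with $Y \to X$ directly is essential. A secondary subtlety is pinning down the precise definition of DR referenced as Definition~\ref{def:DR} — I am assuming it is "$X$ admits no reduced spherical diagram with a $2$-cell," equivalently "every spherical diagram $S \to X$ with $\area(S) \geq 1$ contains a reduction pair" — and the argument should be phrased to match whichever formulation the paper adopts. Modulo that, both directions are short diagram-chasing arguments built on the lifting trick already used in the excerpt.
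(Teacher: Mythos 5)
Your direction $(1\Rightarrow 2)$ has a genuine gap, in two places. First, bicollapsibility only applies to \emph{immersions} $Y\rightarrow X$ with $\pi_1Y=1$, and neither of your proposed fixes produces one: a minimal-area (hence reduced) spherical diagram is only a near-immersion --- reducedness controls local injectivity along open edges, not at vertices, so the claimed ``usual fact'' that its $1$-skeleton immerses is false in general; and replacing $Y$ by the image of its lift in $\widetilde X$ fails because the image of a simply-connected complex need not be simply connected, so bicollapsibility (unlike the hypothesis of Lemma~\ref{lem:bicollapse criterion}, which covers \emph{all} finite subcomplexes of $\widetilde X$) gives you nothing about that image. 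Second, and more seriously, even granting that $Y=S-\interior{R}$ immerses, your final step does not follow: a free face $e$ of a $2$-cell $R'$ in $Y$ only tells you that the other $2$-cell of $S$ along $e$ is $R$; it does not make $(R,R')$ a cancellable pair, since $R$ and $R'$ may map to different $2$-cells of $X$ (or to different positions along the same boundary path), and then $S\rightarrow X$ is still a near-immersion. (Similarly, in your ``trivial $Y$'' case, a sphere made of two $2$-cells with the same boundary does not by itself contradict reducedness --- $X$ could a priori have duplicate $2$-cells; the contradiction there must again come from bicollapsibility, not from the sphere.) The paper avoids both problems at once: bicollapsible implies unicollapsible, and the implication unicollapsible $\Rightarrow$ DR in Proposition~\ref{prop:corson trace Collapsibility} takes a \emph{maximal tower lift} $S^2\rightarrow T$ (Lemma~\ref{lem:towers exist}); since the sphere surjects onto $T$ and $T$ has $\pi_1T=1$, a collapse of $T$ along a free face $e$ forces both sides of any edge of $S^2$ over $e$ into the unique $2$-cell germ of $T$ at $e$, violating local injectivity. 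That surjectivity onto the target of the collapse is exactly what your sub-diagram $Y\subset S$ cannot provide, so the argument needs to be restructured along the tower-lift lines (or simply by citing the proposition, as the paper does).

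Your direction $(2\Rightarrow 1)$ is in the same spirit as the paper's (DR forces $Y$ to be aspherical, hence contractible when $\pi_1Y=1$, and then weak bicollapsibility applies), but the execution is shaky: a nonzero class in $\homology_2(Y)$ is a cellular $2$-cycle and need not be ``realized as a subcomplex'' that is a sphere, and reducedness is a property of a \emph{map}, not something conferred by landing in $\widetilde X$. The clean route is: $\pi_1Y=1$ gives $\pi_2(Y)\cong\homology_2(Y)$ by Hurewicz; a nontrivial element of $\pi_2$ is represented by a spherical diagram in $Y$, a minimal-area representative is reduced, i.e.\ a near-immersion $S^2\rightarrow Y$, and composing with the immersion $Y\rightarrow X$ yields a near-immersion $S^2\rightarrow X$, contradicting DR. With that repair this direction is fine; the substantive defect of the proposal is in $(1\Rightarrow 2)$.
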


\begin{proof}
(1) implies DR by Proposition~\ref{prop:corson trace Collapsibility},
and the remainder of $(1\Rightarrow 2)$ holds since contractibility implies simple-connectivity.

$(2\Rightarrow 1)$ holds since $X$ is DR and hence aspherical,
and so when $Y\rightarrow X$ with $\pi_1Y=1$ we find that $Y$ is contractible,
and hence there are two collapses.
\end{proof}

The following will be useful: 
\begin{defn}
A \emph{tower map} is a composition of inclusions of subcomplexes and covering maps.
A \emph{tower lift}   of a map $f:A\rightarrow B$,
is a map $\hat f:A\rightarrow T$ and a tower map $g:T\rightarrow B$
such that $f=g\circ \hat f$.
The tower lift is \emph{maximal} if $\hat f$ is surjective and $\pi_1$-surjective.
\end{defn}
Howie proved the following in \cite{Howie87} for combinatorial maps, but the proof generalizes to cellular maps.
In our setting, Lemma~\ref{lem:towers exist} applies, since our maps
are cellular after a subdivision.

\begin{lem}\label{lem:towers exist}
Let $A\rightarrow B$ be a cellular map of complexes with $A$ compact.
There exists a maximal tower lift $A\rightarrow T$ of $A\rightarrow B$.
\end{lem}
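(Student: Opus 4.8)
The plan is to run Papakyriakopoulos's tower construction in the form used by Howie. I would consider the collection $\mathcal{T}$ of all tower lifts $(\hat f\colon A\to T,\; g\colon T\to B)$ of $f$ for which the map $\hat f$ is \emph{surjective}. This collection is nonempty: corestricting $f$ to its image $T_0=f(A)$, together with the inclusion $g_0\colon T_0\hookrightarrow B$, is such a lift, and $T_0$ is a finite complex because $A$ is compact. The point of restricting attention to surjective $\hat f$ is that a combinatorial surjection $\hat f\colon A\to T$ carries each cell of $A$ onto a cell of $T$ of the same dimension, and every cell of $T$ is hit, so $T$ has no more cells than $A$. Thus the quantity ``number of cells of $T$'' is bounded above over all of $\mathcal{T}$, and I would choose an element of $\mathcal{T}$ maximizing it. The claim to be proven is that for such a maximal choice $\hat f$ is automatically $\pi_1$-surjective, and hence $(\hat f,g)$ is a maximal tower lift.

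To establish the claim I would argue by contradiction. Suppose $\hat f\colon A\to T$ is surjective but $H:=\mathrm{image}\bigl(\hat f_*\colon\pi_1A\to\pi_1T\bigr)$ is a proper subgroup of $\pi_1T$. Let $p\colon\widehat T\to T$ be the connected covering corresponding to $H$. Since $\hat f_*(\pi_1A)=H$, the map $\hat f$ lifts to $\hat f'\colon A\to\widehat T$ with $p\circ\hat f'=\hat f$; set $T'=\hat f'(A)$, a finite subcomplex of $\widehat T$, and corestrict to get a surjection $\hat f'\colon A\to T'$. Then
\[
\bigl(\hat f'\colon A\to T',\quad T'\hookrightarrow\widehat T\xrightarrow{\ p\ }T\xrightarrow{\ g\ }B\bigr)
\]
is a tower lift of $f$ with surjective first map --- the second arrow is a composition of an inclusion, a covering map, and the tower map $g$, hence a tower map --- so it lies in $\mathcal{T}$. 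Because $\hat f$ is onto, $p(T')=p(\hat f'(A))=\hat f(A)=T$, so $p|_{T'}\colon T'\to T$ is a surjective combinatorial map; this forces $\#\{\text{cells of }T'\}\ge\#\{\text{cells of }T\}$.

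The hard step --- really the only step where anything must be checked --- is that this inequality is \emph{strict}, which then contradicts the maximal choice and finishes the argument. If instead it were an equality, $p|_{T'}$ would be a surjective combinatorial map inducing a bijection on cells; being a restriction of the covering $p$ it is a local homeomorphism, and a combinatorial, cell-bijective local homeomorphism between finite complexes is injective on spaces and open, hence a homeomorphism. Then $(p|_{T'})_*=p_*\circ\iota_*\colon\pi_1T'\to\pi_1T$ would be an isomorphism, in particular surjective; but $p_*\colon\pi_1\widehat T\to\pi_1T$ is injective with image $H\subsetneq\pi_1T$, so $p_*\circ\iota_*$ cannot be onto, a contradiction. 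Hence the cell count strictly increases, contradicting maximality, so $\hat f$ is $\pi_1$-surjective as claimed. Finally, to obtain the stated generality for cellular (not just combinatorial) maps, I would first subdivide $A$ and $B$ so that $f$ becomes combinatorial, run the argument above, and note that subdivision is compatible with inclusions and covering maps, so the resulting maximal tower lift is one for the original cell structures as well; the cell-counting inequality is the only place where combinatoriality is used, and after discarding degenerate cells it goes through verbatim.
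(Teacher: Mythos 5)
The paper offers no proof of this lemma---it simply cites Howie \cite{Howie87}---and your argument is exactly the standard tower argument from that source: maximize the number of cells of $T$ over tower lifts with $\hat f$ surjective (bounded by the number of cells of the compact complex $A$), and show that failure of $\pi_1$-surjectivity forces a strict increase in the cell count by lifting to the cover corresponding to $\hat f_*(\pi_1 A)$, with equality of cell counts ruled out because it would make $T'\rightarrow T$ a homeomorphism whose image on $\pi_1$ nevertheless lands in the proper subgroup $p_*(\pi_1\widehat T)$. This is correct (for connected $A$, which is the case in all the paper's applications) in the combinatorial setting Howie treats; the only soft spot is your closing claim that an arbitrary cellular map becomes combinatorial after subdivision, which is false in general, but this matches the level of rigor of the paper's own unproved remark that ``the proof generalizes to cellular maps,'' and the maps the paper actually feeds into the lemma are combinatorial after subdivision.
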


\begin{cor}
\label{cor:2-cells embed in Xtilde}
Suppose $X$ is compact and bicollapsible, and that each 2-cell has immersed boundary cycle. Then each 2-cell of $\widetilde X$ has embedded boundary cycle.
\end{cor}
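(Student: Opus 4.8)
The plan is to argue by contradiction. Suppose some $2$-cell $\tilde R$ of $\widetilde X$ has non-embedded boundary cycle; I will manufacture from this a compact, simply-connected $2$-complex that immerses in $X$ but is neither trivial nor admits two collapses along free faces, contradicting Definition~\ref{def:bicollapsible}.

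First note that $\boundary_\p\tilde R$ is a combinatorial immersion $S^1\to\widetilde X^1$: it is a lift, along the covering $\widetilde X^1\to X^1$, of the attaching map of the image $2$-cell of $X$ (an immersion by hypothesis), and a lift of an immersion along a covering is an immersion, by inspection of links. So it suffices to show $\boundary_\p\tilde R$ is injective, and I suppose it is not. Then $\boundary_\p\tilde R$ revisits a $0$-cell $\tilde v$, hence splits as a concatenation $\sigma\tau$ of two loops based at $\tilde v$ of positive length; choose $\sigma$ to be a shortest proper subloop, so that $\sigma$ is embedded (and, after routine adjustments to the splitting point, $\tau$ may likewise be taken reduced). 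Since $\widetilde X$ is simply-connected, $\sigma$ and $\tau$ are null-homotopic and therefore bound disk diagrams in $\widetilde X$; since they are reduced loops of positive length they are essential in the graph $\widetilde X^1$, so choosing minimal such diagrams $E\to\widetilde X$ and $F\to\widetilde X$ with $\boundary_\p E=\sigma$, $\boundary_\p F=\tau$, and removing spurs, gives $\area(E)\ge 1$, $\area(F)\ge 1$, with every potential free face of $E$ lying on $\sigma$ and of $F$ lying on $\tau$.

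Now I would glue: set $Y_0=\overline{\tilde R}\cup_\sigma E\cup_\tau F$, attaching $E$ along the $\sigma$-portion of $\boundary_\p\tilde R$ and $F$ along the $\tau$-portion. Then $Y_0$ is compact, has at least three $2$-cells, and has no spur; moreover, since $\boundary_\p\tilde R=\sigma\tau$ traverses every $1$-cell of its image, each such $1$-cell now lies on $\tilde R$ together with a $2$-cell of $E$ or $F$, while the interior $1$-cells of $E$ and $F$ already lie on two $2$-cells, so $Y_0$ has no free face. The group $\pi_1 Y_0$ is finitely generated; attaching, in the same fashion, minimal spur-free disk diagrams in $\widetilde X$ along loops representing a generating set kills $\pi_1$, and since the free faces of each such diagram lie on its boundary loop no new free face or spur is created. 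After finitely many steps one obtains a compact $2$-complex $Y$ with $\pi_1 Y=1$, at least three $2$-cells, and no spur or free face.

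Finally, the composite $Y\to\widetilde X\to X$ is cellular, so by Lemma~\ref{lem:towers exist} it has a maximal tower lift $Y\to T\xrightarrow{g}X$: here $g$ is an immersion (a tower map is a composition of subcomplex inclusions and coverings), $T$ is compact, and $\pi_1 T=1$ because $Y\to T$ is surjective and $\pi_1$-surjective. Applying bicollapsibility to $g$ forces $T$ to be trivial or to have two distinct cells collapsing along free faces, and the crux — which I expect to be the main obstacle — is to contradict both. The disk-diagram pieces of $Y$ were chosen minimal, hence reduced, so they cannot be collapsed into $\tilde R$ by the tower, whence $T$ retains at least two $2$-cells and is not trivial; and one must verify that the identifications $Y\to T$ performs can occur only along $\tau$ — the $\sigma$-side remaining doubly covered, using that $\sigma$ is a shortest subloop — so that at most one $1$-cell of $T$ is exposed as a free face and $T$ admits at most one collapse. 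Either conclusion contradicts bicollapsibility, so $\boundary_\p\tilde R$ must be injective after all.
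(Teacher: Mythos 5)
Your overall strategy (attach disk diagrams to the offending $2$-cell, pass to a maximal tower lift $T$ via Lemma~\ref{lem:towers exist}, and contradict bicollapsibility by showing $T$ is neither trivial nor has two collapses) is the same as the paper's, but the step you yourself flag as ``the crux'' is exactly the content of the paper's proof, and your setup does not supply it. Having no free faces in $Y$ does not prevent free faces in $T$: the tower lift folds cells, and any cancellable pair in $Y$ (e.g.\ a $2$-cell of $E$ or $F$ forming a cancellable pair with $\tilde R$ along an edge of $\sigma$ or $\tau$) gets identified in $T$, which can expose free faces anywhere and can even collapse several of your $2$-cells onto one. Your only control is that $E$ and $F$ have minimal area \emph{for the fixed boundaries} $\sigma$ and $\tau$; that rules out cancellable pairs internal to $E$ or to $F$, but says nothing about cancellable pairs between $\tilde R$ and a $2$-cell of $E$ or $F$, nor about pairs involving the extra diagrams you attach to kill $\pi_1$. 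The assertion that ``the identifications can occur only along $\tau$ because $\sigma$ is a shortest subloop'' is not proved and I do not see how shortestness (which gives only embeddedness of $\sigma$) could yield it.

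The paper closes this gap with a different minimization and an exchange argument that your decomposition cannot run: it chooses, over \emph{all} $2$-cells $R$ and \emph{all} nontrivial subpaths $\eta$ of $\partial_\p R$ bounding a diagram, a pair $(R,D)$ with $\area(D)$ smallest, attaches the single diagram $D$ along $\eta$, and shows $R\cup_\eta D$ has no cancellable pair: a putative pair between $R$ and a $2$-cell $R'$ of $D$ is surgered into a strictly smaller configuration in which $R'$ itself has non-embedded boundary bounding a smaller diagram, contradicting the global minimality (this is Figure~\ref{fig:2cellsEmbed}). Only then does one know the sole possible collapses of $T$ are of the image of $R$ along $\alpha$, so that bicollapsibility forces $T$ to be a single collapsing $2$-cell; and that residual case is killed not by reducedness of the diagrams, as you claim (``$T$ retains at least two $2$-cells''), but by observing that it forces the image of $D$ in $T$ to be a tree, contradicting the hypothesis that $\partial_\p R$ is immersed. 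So two concrete repairs are needed: replace your fixed $\sigma\tau$ splitting and per-boundary minimality by the global minimal choice that supports the exchange argument, and handle the single-$2$-cell tower case via the immersed-boundary hypothesis rather than by asserting it cannot occur.
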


\begin{proof}
Choose a 2-cell $R$ so that $\partial_\p R = \alpha \eta$, such that $\eta$ is a nontrivial subpath that bounds a disk diagram $D$. Moreover, assume that this choice is the smallest possible, in the sense that $\area(D)$ is smallest among all such possibilities.
We claim $R\cup_\eta D$ has a maximal tower lift to a complex $T$ so that $T$ has only one collapse, which will contradict bicollapsibility.
Indeed, we will show below that $R\cup_\eta D$ does not have any cancellable pairs, and consequently the only possible collapses of $T$ are collapses of the image of $R$ along 1-cells in the image of $\alpha$.

Suppose that $R\cup_\eta D$ has a cancellable pair. By minimality of $\area(D)$, this pair must be between $R$ and a 2-cell $R'$ of $D$ along some edge $e$. Let $\gamma'$ be the maximal subpath of $\partial_\p R'$ that is equal to a subpath $\gamma$ of $\partial_\p R$.
Express $\eta$ as $\beta\gamma\delta$, and express $\partial_\p R'$ as $\alpha' \beta' \gamma' \delta'$. 

Now Let $E=D- R'-\gamma$, and let $D'$ be the disk diagram obtained by identifying $\beta\sim \beta'$ in $E$ and $\delta\sim \delta'$. Observe that $R'$ is a 2-cell with non-embedded boundary cycle with subpath $\alpha'$ that bounds the disk diagram $D'$. However, $\area(D')<\area(D)$ by construction, violating minimality of $(R,D)$. See Figure~\ref{fig:2cellsEmbed}.

Therefore, we find that $T$ must consist of a single 2-cell that collapses. Consider the collapse of $T$; this induces collapses of $R\cup_\eta D$. However there's only one possible collapse of $R\cup_\eta D$. Therefore, we must have that the image of $D$ in $T$ is a tree. It follows that $\partial_\p R$ is not immersed, a contradiction.

\begin{figure}
\centering
\includegraphics[width=12cm]{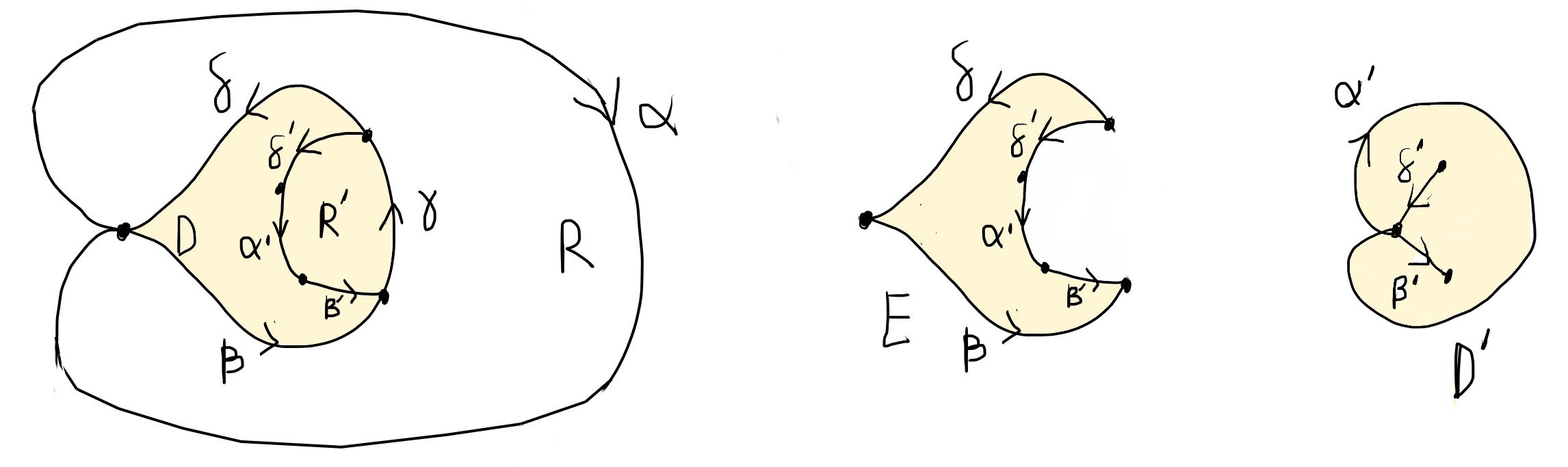}
\caption{The 2-cell $R$ with non-embedded boundary cycle.}
\label{fig:2cellsEmbed}
\end{figure}
 
\end{proof}

\section{Examples of bicollapsible complexes}
\label{examples sec}
Among various examples of bicollapsible complexes, we emphasize first that 
\emph{staggered 2-complexes without torsion} are bicollapsible by a result of Howie \cite{Howie87}.
We generalize this in \S\ref{sec:bislim bicollapse}, where we show that a bislim 2-complex is bicollapsible.

Geometric properties of $X$ can be used to ensure bicollapsibility. For instance:

\begin{prop}\label{prop:CAT0 bi}
Let $X$ be a CAT(0) 2-complex.
Then $X$ is bicollapsible.
\end{prop}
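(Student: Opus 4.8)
The plan is to verify the criterion of Lemma~\ref{lem:bicollapse criterion}, so it suffices to show that every finite connected subcomplex $Y$ of the universal cover $\widetilde X$ is either a graph, or the closure of a single $2$-cell collapsing along a free face, or has at least two $2$-cells collapsing along free faces. Since $X$ is a CAT(0) $2$-complex, $\widetilde X$ is itself CAT(0), so every $2$-cell of $\widetilde X$ is embedded, has embedded boundary, and two distinct $2$-cells meet along a connected (possibly empty) subcomplex of their boundaries — this convexity is what prevents pathological gluings and will be used throughout. Assume $Y$ contains at least one $2$-cell and is not a single closed $2$-cell; we must produce two distinct free-face collapses.

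First I would reduce to the case where $Y$ has no spurs, i.e.\ no valence-$1$ vertices: if $Y$ has a spur then the edge at that spur is a free face of an edge, giving one collapse, and after removing it we induct — the only thing to check is that we still land in one of the three cases, and a short argument handles the base cases. So assume $Y$ is spurless with $k \ge 2$ two-cells (the case $k=1$ spurless but not a single closed disk means $Y$ is a $2$-cell with a tree attached, already handled). The key geometric input is that a spurless finite CAT(0) $2$-complex with $k\ge 1$ two-cells has at least two ``boundary'' $2$-cells each contributing a free face. To make this precise I would consider the ``boundary path'' structure: because $Y$ embeds convexly in $\widetilde X$, each $2$-cell's boundary circle either lies entirely in the interior of $Y$ or meets $\partial Y$ in a union of arcs. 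Using a combinatorial-Gauss–Bonnet / curvature counting argument on $Y$ (which, being CAT(0) and simply connected, satisfies $\euler(Y)=1$ with all vertex curvatures $\le 0$ in the interior), I would argue that the curvature must be concentrated on $\partial Y$, forcing at least two $2$-cells $R$ whose boundary arc on $\partial Y$ is ``long'' — specifically longer than the complementary arc — which exhibits a free face of $R$.

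More concretely, the mechanism for a single free-face collapse of a $2$-cell $R$ is: there is an edge $e$ of $\partial R$ lying on $\partial Y$ that belongs to no other cell of $Y$. I would show such an $e$ exists for at least two distinct $2$-cells. Pick an outermost $2$-cell $R$ — one meeting $\partial Y$ — and look at the arc(s) of $\partial R \cap \partial Y$. If no edge of $\partial R$ on $\partial Y$ is free, then every such edge is shared with another $2$-cell; combined with convexity (two $2$-cells meeting in a connected arc) and spurlessness, one deduces a geodesic bigon or a non-convexity in $\widetilde X$, contradicting CAT(0) — roughly, two $2$-cells whose boundaries overlap in an arc containing a whole exposed side of $R$ would force the boundary cycle of $R$ to fail to be a local geodesic where it should be. Iterating this on a second outermost lobe (cutting along a cutcell if $Y$ is not $2$-connected, exactly as in the proof of Lemma~\ref{lem:complex dehn property}) yields the second collapse. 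The case analysis for cutcells is routine given that argument: a cutcell $R$ splits $Y$ into lobes, each of which by induction has a collapse away from $R$, so two lobes give two distinct collapses.

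The main obstacle I anticipate is the combinatorial geometry step establishing that an outermost $2$-cell in a spurless CAT(0) $2$-complex has a genuinely free boundary edge — the naive curvature count gives the existence of ``convex corners'' on $\partial Y$ but translating those corners into a full free \emph{edge} (not just a free vertex) requires care when attaching maps of $2$-cells are not embeddings a priori, and when the link condition only controls angles rather than combinatorial incidence. I would handle this by first passing to a subdivision making everything combinatorial, then using that in a CAT(0) square/triangle-like complex the link of each vertex is a graph with girth $\ge 2\pi$, so any interior vertex absorbs enough angle that a simply-connected $Y$ must expose at least two $2$-cells along $\partial Y$ with strictly dominant boundary arcs; the strict domination $|Q|>|S|$ is exactly the shell condition and gives the free face. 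Alternatively, and perhaps more cleanly, one can invoke Proposition~\ref{CAT(0) is 3-collapsing} or the cited fact that CAT(0) $2$-complexes are bislim together with Lemma~\ref{lem:bi bi}, but presenting the direct curvature argument keeps the proof self-contained.
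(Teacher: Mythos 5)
Your proposal leaves unproven exactly the step that carries all the weight, and the framework you chose makes that step harder than the statement itself. Having elected to verify Lemma~\ref{lem:bicollapse criterion}, you must treat \emph{every} finite connected subcomplex $Y\subset\widetilde X$, and such a $Y$ need not be simply connected, need not be planar or surface-like, is almost never convex in $\widetilde X$, and with its induced path metric need not be CAT(0). So the assertions you lean on --- $\euler(Y)=1$, a well-defined ``boundary circle'' $\partial Y$ along which curvature concentrates, a combinatorial Gauss--Bonnet count forcing two cells with dominant boundary arcs --- are unavailable: Gauss--Bonnet in this form applies to disk diagrams and surfaces, not to subcomplexes in which an edge may lie in three or more 2-cells, and even where it applies a positively curved boundary corner gives a free \emph{vertex} direction, not a free \emph{edge}. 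You flag this yourself as ``the main obstacle,'' but the proposed fix (subdivide, use girth $\geq 2\pi$ of links) is a gesture, not an argument; that is the proposition, not a lemma you may assume. The auxiliary claims are also unjustified: that two 2-cells of a CAT(0) 2-complex meet in a connected set is not a free consequence of CAT(0) --- in this paper connected intersections come only from 3-collapsing (Lemma~\ref{lem:2cell intersections}), and the CAT(0) 3-collapsing statement (Proposition~\ref{CAT(0) is 3-collapsing}) carries the extra hypothesis that each 2-cell is convex, which also defeats your fallback of invoking that proposition; and there is no ``CAT(0) implies bislim'' fact in the paper to cite.

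The paper's proof is a short direct metric argument from Definition~\ref{def:bicollapsible}, where simple connectivity of $Y$ is part of the hypothesis and does real work: an immersion $Y\rightarrow X$ with $Y$ compact and $\pi_1Y=1$ makes $Y$ itself a CAT(0) 2-complex (links inject, so the link condition persists); choose $p,q\in Y$ at maximal distance, note $p,q\in Y^0$, and observe that $\link(p)$ and $\link(q)$ are each a singleton or contain a spur --- otherwise the geodesic $pq$ could be extended --- and a spur in the link is precisely a free face, hence a collapse at each end; if both collapses involve the same 2-cell $r$ and $Y\neq\mathrm{cl}(r)$, a point $x$ of $Y$ farthest from $r$ yields, by the same link argument, a collapse not involving $r$. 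If you want to salvage your route, you should either argue directly from the definition as above (so that $Y$ is genuinely CAT(0) and farthest-point/geodesic-extension arguments are legitimate), or, if you insist on Lemma~\ref{lem:bicollapse criterion}, replace the Euler-characteristic count by a farthest-point argument carried out in the ambient metric of $\widetilde X$, since the induced metric on an arbitrary finite subcomplex gives you nothing.
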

\begin{proof}
Let $Y\rightarrow X$ be an immersion with $Y$ compact and $\pi_1Y=1$. Then $Y$ is also a CAT(0) 2-complex.

Suppose that $Y$ is not a~$0$-cell. Then there exist points $p,q\in Y$ such that $\dist(p,q)$ is maximal.
The CAT(0) inequality implies that $p,q\in Y^0$.
Observe that each of $\link(p)$ and $\link(q)$ is  either
a singleton or has a spur, for otherwise the geodesic $pq$ could be extended.
Each of these cases provides a collapse.
If these two collapses are associated to the same 2-cell $r$, then either $Y=\text{cl}(r)$ and $Y$ is trivial, or
we choose $x$ to be a point of $Y$ such that $\dist(x,r)$ is maximal.
Note that $x\in Y^0$ as above, and again $\link(x)$ is either a singleton or has a spur.
This produces a second collapse, that is not a collapse of $r$.
\end{proof}

\begin{exmp}
Let $X$ be a 2-complex such that $\widetilde X$ is isomorphic to the product $T_1\times T_2$ of two trees. For instance $X=A\times B$ where $A$ and $B$ are graphs.
Then $X$ is bicollapsible by Proposition~\ref{prop:CAT0 bi}.
Usually $\euler(X)>0$, as is the case when $\euler(A),\euler(B)<0$.   
This shows that the class of bicollapsible 2-complexes goes far beyond that of 2-complexes associated to one-relator groups.
\end{exmp}

Lemma~\ref{lem:bicollapse criterion} provides a rich class of geometric examples
because of the following:

\begin{prop}\label{prop:small cancellation bicollapse}
Let $X$ be a $C(6)$ or $C(4)$-$T(4)$ complex, 
and assume distinct 2-cells of $\widetilde X$ do not have the same boundary paths.
Then $X$is bicollapsible.
\end{prop}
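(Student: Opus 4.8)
The plan is to verify the hypothesis of Lemma~\ref{lem:bicollapse criterion}: namely, that every finite connected subcomplex $K\subseteq\widetilde X$ is a graph, or the closure of a single 2-cell with a free face, or has at least two 2-cells with free faces. Since $X$ is $C(6)$ or $C(4)$-$T(4)$, its universal cover $\widetilde X$ is a (simply-connected) small-cancellation complex, and I would use the standard Greendlinger-type structure theory for such complexes. First I would dispose of the graph case: if $K$ has no 2-cell there is nothing to prove. So assume $K$ contains at least one 2-cell. After collapsing any spurs and free faces of 2-cells as long as possible, I may pass to a subcomplex and reduce to the case where $K$ is either a single 2-cell (done) or has no spurs and no free faces among its 2-cells -- and the goal becomes to derive a contradiction, which yields that the collapsing process must in fact have removed at least two distinct 2-cells along free faces.

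The key step is the combinatorial Gauss--Bonnet / curvature argument. Treat $K$ as a diagram: assign to each interior vertex, edge, and 2-cell a curvature, with the total curvature equal to $2\pi\,\euler(K)\le 2\pi$ since $K$ is connected and contractible-or-simply-connected (indeed any finite connected subcomplex of a simply connected complex has $\euler\le 1$, with equality iff it is a tree... but here we only need $\euler(K)\ge$ something; more precisely $\pi_1 K$ is free so $\euler(K)\le 1$). The $C(6)$ (resp.\ $C(4)$-$T(4)$) condition forces every interior 2-cell to have nonpositive curvature, so positive curvature must concentrate on the boundary $\partial K$. The standard Strebel/Greendlinger classification of the curvature of boundary 2-cells then says that a 2-cell $R$ meeting $\partial K$ in a connected boundary arc either is a ``shell'' carrying large positive curvature -- which, combined with the hypothesis that no two distinct 2-cells of $\widetilde X$ share a boundary path, forces the complementary arc $S$ of $\partial_\p R$ to be \emph{shorter} than the outer arc $Q$, hence $R$ has a free face -- or $R$ carries curvature at most $0$. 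A counting/averaging argument over the boundary then shows that if $K$ is not a single 2-cell then there must be at least two such boundary shells, hence at least two 2-cells with free faces; equivalently, the maximal collapsing sequence removes at least two distinct cells.

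More concretely, here is the cleaner route I would actually write. Run the collapse: repeatedly remove spurs and free 2-cells from $K$. If this ever removes two distinct 2-cells, we are done (two collapses along free faces of distinct cells). Otherwise it removes at most one 2-cell, and we are left, after deleting spurs, with a connected subcomplex $K'$ that is either empty (so $K$ was a graph), a single closed 2-cell (so $K=\mathrm{cl}(r)$ is trivial of type (1) in Lemma~\ref{lem:bicollapse criterion}, or $K$ was $\mathrm{cl}(r)$ plus trees and we already had a spur plus the free face of $r$), or a complex $K'$ with $\ge 2$ 2-cells, no spurs, and no free 2-cell. The last case is the one to rule out. Here I invoke the small-cancellation structure of $K'\subseteq\widetilde X$: by Greendlinger's Lemma for $C(6)$ (and its $C(4)$-$T(4)$ analogue, Lyndon--Schupp), a simply-connected small-cancellation diagram with more than one 2-cell and no spurs has at least two ``outer'' 2-cells $R$ meeting $\partial K'$ in a single arc $Q$ whose complementary arc $S$ in $\partial_\p R$ is a concatenation of at most two (in the $C(6)$ case, three) pieces; since $S$ consists of few pieces and $Q$ of relatively many -- and since distinct 2-cells of $\widetilde X$ do not share a boundary path, ruling out the degenerate equality $|S|=|Q|$ -- one gets $|S|<|Q|$, so $Q$ contains an edge lying on no other 2-cell, i.e.\ a free face of $R$. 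Two such $R$ give two free 2-cells, contradicting that $K'$ has none.

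The main obstacle I anticipate is the bookkeeping in this last step: making the Greendlinger-type statement precise enough to guarantee that the complementary arc $S$ is genuinely \emph{shorter} (not just ``not longer'') than $Q$, and handling the boundary cases where an ``outer'' 2-cell meets $\partial K'$ in more than one arc or where $K'$ has few 2-cells (two or three), where the generic averaging estimate is tight. This is exactly where the hypothesis that distinct 2-cells of $\widetilde X$ have distinct boundary paths is essential -- it excludes the annular/pillowcase configuration $|S|=|Q|$ -- and where one must be careful to produce \emph{two distinct} free-faced 2-cells rather than one. I expect these cases to be handled by the known strengthenings of Greendlinger's Lemma (e.g.\ the form giving at least two ``Greendlinger 2-cells'' on the boundary, or the $i$-shell/$j$-spur classification of Strebel for $C(4)$-$T(4)$), so the proof is really an assembly of standard small-cancellation facts fed into Lemma~\ref{lem:bicollapse criterion}.
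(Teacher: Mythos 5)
Your overall frame is the same as the paper's (verify the hypothesis of Lemma~\ref{lem:bicollapse criterion}), but the engine you propose does not apply to the objects the criterion quantifies over, and this is a genuine gap. Greendlinger's Lemma, Strebel's classification of boundary cells, and the combinatorial Gauss--Bonnet bookkeeping are statements about van Kampen diagrams: planar, simply-connected complexes in which every edge lies on at most two 2-cells and there is a well-defined boundary cycle. The subcomplexes $Y\subset\widetilde X$ in Lemma~\ref{lem:bicollapse criterion} are arbitrary finite connected subcomplexes of the universal cover: they are typically non-planar (an edge of a $C(6)$ or $C(4)$-$T(4)$ complex can lie on arbitrarily many 2-cells of $\widetilde X$), they need not be simply connected, and there is no boundary path along which to locate ``shells'' or to run an averaging argument over $\partial K$. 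So ``treat $K$ as a diagram'' is exactly the step that fails, and the reduction to a spur-free, free-face-free $K'$ to be contradicted by Greendlinger does not get off the ground. This is why the paper does not argue through diagrams at all: it invokes Lemma~\ref{lem:rings} (the concentric-rings structure of $\widetilde X$ from \cite{WiseEnergy}), which is a statement about the universal cover itself. One restricts the radius function $g$ to the finite subcomplex $Y$, takes a 2-cell $R$ at the maximal level $[n,n+1]$, and Conditions~\eqref{ring:abcd}--\eqref{ring:next} together with Remark~\ref{rem:nontrivial top} force $R$ to collapse along an edge of its outer arc $\gamma$; a second, distinct collapse is then extracted either from a second top-level 2-cell, from a ``next lower maximum,'' or, in the $C(4)$-$T(4)$ case, from the $T(4)$ condition applied to the neighbors of a cell at level $[n-1,n]$.

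Two secondary points. First, your reformulation ``run the collapsing sequence; if it removes two distinct 2-cells we are done'' is not the condition in Definition~\ref{def:bicollapsible} or Lemma~\ref{lem:bicollapse criterion}: what is required is that $Y$ itself contain two distinct cells each having a free face in $Y$, whereas a sequence may only free the second face after the first cell is deleted. Second, your use of the hypothesis that distinct 2-cells of $\widetilde X$ have distinct boundary paths (to exclude $|S|=|Q|$) is not how it functions here; it is needed so that a pair of duplicate 2-cells does not already give a two-cell subcomplex with no free face, and, in the $C(4)$ setting, your claim that an inner path made of two pieces is automatically shorter than the outer path is unjustified without a metric small-cancellation hypothesis --- the paper's argument never needs $|S|<|Q|$, only the existence of an edge of $\gamma$ lying on no other 2-cell of $Y$.
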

We refer to \cite{LS77} for a historical account and the definitions of small-cancellation theory,
and to \cite{McCammondWiseFanLadder} for a geometric treatment in line with our usage here.
Note that 2-cells in $\widetilde X$ are embedded by Greendlinger's lemma.

The proof of Proposition~\ref{prop:small cancellation bicollapse}
uses the following observation about the structure of the universal cover $\widetilde X$. See \cite[Prop.~10.2]{WiseEnergy} for detail.

\begin{figure}
  \centering
  \includegraphics[width=.7\textwidth]{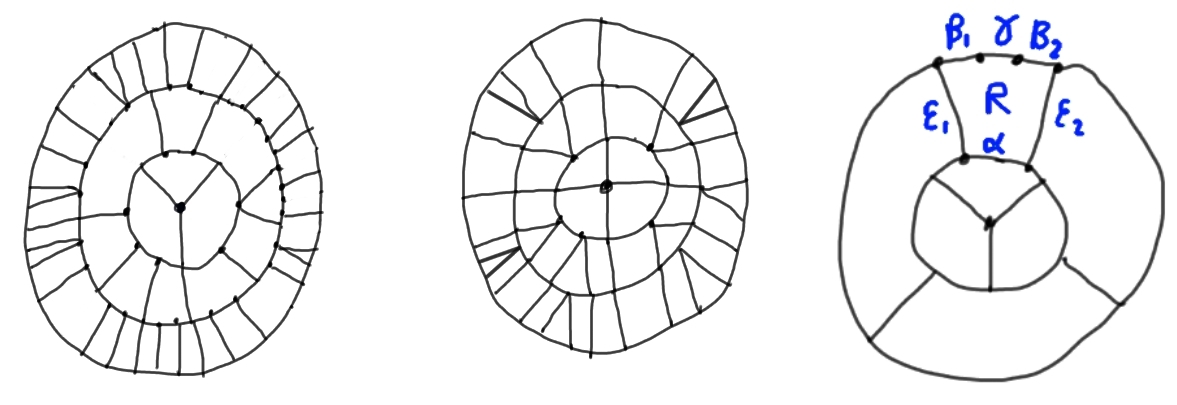}
  \caption{\label{fig:PlanarRings} $C(6)$ and $C(4)$-$T(4)$ concentric rings illustrating Lemma~\ref{lem:rings} when $\widetilde X$ is planar.}
\end{figure}

\begin{lem}[Rings]\label{lem:rings}
Let $\widetilde X$ be a simply-connected $C(6)$ $[$or $C(4)$-$T(4)$$]$ complex.
For simplicity, assume distinct  2-cells cannot have the same boundary cycle.
Then for each $0$-cell $b\in \widetilde X^0$, there is a map $g:\widetilde X \rightarrow [0,\infty)$ with the following properties:
\begin{enumerate}
\item $g(\widetilde X^0)\subset \naturals$ and $g^{-1}(0)=\{b\}$.
\item Each $1$-cell either maps to a point $n\in \naturals$ or to a segment $[n,n+1]$.
\item For each closed  $2$-cell $R$, we have $g(R)= [n,n+1]$ for some $n\in\naturals$,
and there is a decomposition  $\boundary_\p R = \alpha \varepsilon_1\beta_1\gamma\beta_2 \varepsilon_2$ where $\varepsilon_1,\varepsilon_2$ are edges,
and $g(\varepsilon_1)=g(\varepsilon_2)=[n,n+1]$ and $g(\alpha)=n$ and $g(\beta_1\gamma\beta_2)=n+1$.
\item\label{ring:abcd} $\varepsilon_1\beta_1$ and $\beta_2\varepsilon_2$ are pieces or single edges (and we allow $\beta_1$ and $\beta_2$ to be trivial)
\item \label{ring:exposure} $\alpha$ is either trivial or the concatenation of at most  two pieces $[$one piece$]$.
\item \label{ring:next} For every 2-cell $R'\neq R$, if $\boundary_\p R'$ traverses an edge of $\gamma$
then $g(R')=[n+1,n+2]$.
\end{enumerate}
\end{lem}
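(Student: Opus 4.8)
The plan is to build the radial function $g$ by peeling off concentric \emph{rings} of 2-cells around the chosen basepoint $b$, following the argument of \cite[Prop.~10.2]{WiseEnergy}. First I would define $g$ on the $0$-skeleton combinatorially: set $g(v)$ to be the \emph{ring distance} of $v$ from $b$, measured not in the $1$-skeleton but by the minimal number of 2-cells one must cross along a path from $b$ to $v$ — equivalently, $g^{-1}(\{0,1,\dots,n\})$ should be the smallest ``ball'' that is a union of closed 2-cells (together with a spur-tree attached to $b$ if needed) containing $b$ in its interior. With this definition $g(\widetilde X^0)\subset\naturals$ and $g^{-1}(0)=\{b\}$ are immediate, giving property (1). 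One then extends $g$ affinely across $1$-cells and $2$-cells; the content is that this extension is consistent, i.e.\ that no $1$-cell has endpoints whose $g$-values differ by more than $1$, and that each $2$-cell meets only two consecutive levels $n,n+1$. This is where the small-cancellation hypothesis enters.

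Next I would establish properties (2) and (3) simultaneously by analyzing a single $2$-cell $R$ with $g(R)\ni n+\tfrac12$. The key point is that the portion of $\boundary_\p R$ lying at level $n$ (the ``inner arc'' $\alpha$) and the portion at level $n+1$ (the ``outer arc'' $\beta_1\gamma\beta_2$) must each be connected, with exactly two $1$-cells $\varepsilon_1,\varepsilon_2$ bridging the two levels; otherwise $R$ would be cut into more than one component by the level-$n$ ball, contradicting that the ball at level $n$ is itself a union of $2$-cells glued along arcs. This connectedness is exactly the sort of planar/combinatorial fact that holds in $C(6)$ and $C(4)$-$T(4)$ complexes because the link of every $0$-cell has girth $\ge 6$ (resp.\ $\ge 4$, with every vertex of valence $\ge 4$): a $2$-cell cannot ``wrap around'' in a way that touches a lower ring in two separated arcs. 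So the decomposition $\boundary_\p R=\alpha\varepsilon_1\beta_1\gamma\beta_2\varepsilon_2$ with $g(\alpha)=n$, $g(\beta_1\gamma\beta_2)=n+1$, $g(\varepsilon_1)=g(\varepsilon_2)=[n,n+1]$ records precisely this structure.

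For properties (4) and (5) I would invoke the small-cancellation length bounds on the arcs. The inner arc $\alpha$ is an arc of $\boundary_\p R$ that is shared with the inner ball, hence is a union of pieces (each maximal sub-arc common to $R$ and another $2$-cell of the inner ball is a piece). In the $C(6)$ case Greendlinger-type counting forces the total ``exposure'' — the part of $\boundary_\p R$ not consumed by pieces with inner neighbors — to be large, so $\alpha$ consists of at most two pieces; in the $C(4)$-$T(4)$ case one similarly gets the one-piece bound, using the $T(4)$ condition to rule out the configuration where $\alpha$ would be two pieces meeting at a valence-$3$ interior vertex. The same analysis applied to the two ``side'' arcs $\varepsilon_1\beta_1$ and $\beta_2\varepsilon_2$ — each bordering the $2$-cell immediately clockwise/counterclockwise in the same ring — shows each is a single piece (or a single edge), which is property (4). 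Finally, property (6) is the statement that the outer arc $\gamma$ of $R$ genuinely faces the next ring: any $2$-cell $R'\neq R$ that traverses an edge of $\gamma$ cannot lie in ring $[n,n+1]$ (else it would share more than a piece with $R$, or create a short cycle in a vertex link) and cannot lie in ring $[n+2,n+3]$ or beyond (else $\gamma$'s edge would be at level $\ge n+2$, contrary to $g(\gamma)=n+1$); hence $g(R')=[n+1,n+2]$.

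The main obstacle I anticipate is proving the connectedness claim underlying (3) — that the inner ball at level $n$ meets each level-$(n+1)$ $2$-cell in a single arc and is bridged by exactly two edges — rather than in several disjoint arcs. This is genuinely a planarity-flavored statement and the honest proof requires a careful induction on $n$ showing the level-$n$ ball is simply connected and has its boundary an embedded cycle, so that attaching the next ring of $2$-cells behaves like attaching cells along sub-arcs of a circle. The small-cancellation hypotheses feed into this induction exactly as in the proof of Greendlinger's lemma: a failure of connectedness produces a $2$-cell sharing two separated arcs with the ball, and expanding those arcs into minimal pieces yields a disk diagram with too few interior edges relative to its boundary, contradicting $C(6)$ (resp.\ $C(4)$-$T(4)$). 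Since the excerpt defers to \cite[Prop.~10.2]{WiseEnergy} for detail, the proof here would be a short paragraph extracting the stated combinatorial conclusions from that reference, with the ring-by-ring induction as the conceptual engine.
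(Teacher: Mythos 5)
The paper does not actually prove Lemma~\ref{lem:rings}: it is stated as an observation about the structure of $\widetilde X$ and the reader is referred to \cite[Prop.~10.2]{WiseEnergy} for the details, so there is no internal argument to compare yours against. Your plan --- build $g$ ring by ring from $b$, show each $2$-cell meets exactly two consecutive levels with a connected inner arc, then extract (4)--(6) from piece counting --- is the expected shape of the argument in that reference, and your closing decision to quote the reference is exactly what the paper itself does. As a reconstruction of the intended route, this is on target.

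That said, if the write-up were meant to stand on its own, the substantive content lies precisely in the steps you defer. Your first definition of $g(v)$ as ``the minimal number of $2$-cells crossed'' is not yet well posed (combinatorial paths live in the $1$-skeleton and cross no $2$-cells); the usable definition is the closed-star induction you mention second, $K_0=\{b\}$ and $K_{m+1}$ the union of closed cells meeting $K_m$, and one must then \emph{prove}, not posit, that every $2$-cell meets only two consecutive balls. The heart of the lemma is the inductive statement that the level-$n$ ball is simply connected with controlled outer boundary; from that one deduces the single inner arc $\alpha$, the exactly two vertical edges $\varepsilon_1,\varepsilon_2$, and conclusion (6) --- in particular ruling out a second $2$-cell of the \emph{same} level across an edge of $\gamma$, which your sketch handles only with ``a short cycle in a vertex link.'' Likewise the exposure bounds in (5) (at most two pieces for $C(6)$, one for $C(4)$-$T(4)$, where $T(4)$ is needed at the corner where two inner pieces would meet) require that induction together with Greendlinger-style counting. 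None of this is carried out in your proposal, and you say so; since the paper also outsources these details to \cite[Prop.~10.2]{WiseEnergy}, that is acceptable, but be aware that the ``short paragraph extracting the stated conclusions'' is where all the work lives.
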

\begin{rem}\label{rem:nontrivial top} 
In the $C(6)$ $[$$C(4)$-$T(4)$$]$ case it follows from Conditions~\eqref{ring:abcd},~\eqref{ring:exposure}~and~\eqref{ring:next} that  $\gamma$ is 
 not the concatenation of fewer than two $[$one$]$ nontrivial pieces.
 \end{rem}
\begin{proof}[Proof of Proposition~\ref{prop:small cancellation bicollapse}]
We will apply Lemma~\ref{lem:bicollapse criterion}.
Let $Y\subset \widetilde X$ be a finite subcomplex with at least one 2-cell.
Let $b\in Y^0$.
Consider the map $g:\widetilde X\rightarrow [0,\infty)$ provided by Lemma~\ref{lem:rings} restricted to 
$g:Y\rightarrow [0,\infty)$.

Since $Y$ is finite, we may choose $n$ maximal such that $g(R)=[n,n+1]$ for some  2-cell $R$.
By Condition~\eqref{ring:abcd} $\boundary_\p R = \alpha\varepsilon_1\beta_1\gamma\beta_2\varepsilon_2$ where $\gamma$ is nontrivial
by Remark~\ref{rem:nontrivial top},
Hence for each 2-cell $R$ with $g(R)=[n,n+1]$ we have $R$ collapses along an edge of $\gamma$ by Condition~\eqref{ring:next}.

Suppose there is a unique $2$-cell with $g(R)=[n,n+1]$.
If $R$ is the only 2-cell of $Y$ then we are done, so suppose there is another 2-cell.
If there is no 2-cell $R'$ with $g(R')=[n-1,n]$ then a ``next lower maximum''  provides another collapse as above.
We thus consider a  2-cell $R'$ with $g(R')=[n-1,n]$.

In the $C(6)$ case 
we see that $\gamma' \not\subset \boundary R$ by Remark~\ref{rem:nontrivial top},
and hence $R'$  collapses along an edge of $\gamma'-\boundary R$.
Hence $Y$ has at least two collapses.

In the $C(4)$-$T(4)$ case if $R'$ has no neighboring 2-cell at $\varepsilon_1'$ and/or $\varepsilon_2'$ then we have another collapse.
So assume $R'$ has neighbors $R_1'$ and $R_2'$ (perhaps $R_1'=R_2'$).
Now observe that $\gamma_1'$ and $\gamma'$ cannot lie together  in $\boundary R$,
for then the triple $R_1', R_2', R$ would violate the $T(4)$ hypothesis.
\end{proof}

\section{Unicollapsibility and diagrammatic reducibility}
\label{sec:DR and n-collapsing}

\begin{defn}
$X$ is \emph{unicollapsible} if for every immersion
$Y\rightarrow X$ with $\pi_1Y=1$, the complex $Y$ collapses to a point,
in the sense that there is a sequence of collapses along free faces starting with $Y$ and terminating in a 0-cell.
\end{defn}

\begin{defn}[Diagrammatic Reducibility]
\label{def:DR}
A \emph{near-immersion} $A\rightarrow B$ is a combinatorial map between 2-complexes that is locally-injective except at $A^0$, and a 2-complex $X$ is \emph{diagrammatically reducible} (or DR) if there is no near-immersion $S^2\rightarrow X$ for some combinatorial 2-sphere $S^2$. 
We refer to \cite{Gersten87b}.
Note that bicollapsibility  implies unicollapsibility
which is equivalent to DR by Proposition~\ref{prop:DR Collapsibility}.
\end{defn}

The following extends the elegant Corson-Trace characterization of DR 
 \cite{CorsonTrace2000}:

\begin{prop}\label{prop:DR Collapsibility}\label{prop:corson trace Collapsibility}
The following are equivalent for the 2-complex $X$:
\begin{enumerate}
\item
 $X$ is unicollapsible.
 \item  $X$ is DR.
\item Every compact subcomplex of $\widetilde X$ collapses to a graph.
\end{enumerate}
\end{prop}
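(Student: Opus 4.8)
The plan is to prove the three conditions equivalent via the cycle $(1)\Rightarrow(2)\Rightarrow(3)\Rightarrow(1)$, using disk diagrams and tower lifts as the main tools. For $(1)\Rightarrow(2)$: suppose $X$ is not DR, so there is a near-immersion $S^2\rightarrow X$ with $S^2$ a combinatorial 2-sphere. Remove the interior of one 2-cell $R_\infty$ to obtain a disk diagram $D\rightarrow X$ which is a near-immersion except at vertices, and whose boundary path is the attaching map of $R_\infty$. The key observation is that $D$ cannot collapse to a point: a near-immersion has no cancellable pair of 2-cells (two 2-cells sharing a free edge with the same attaching data would force a genuine failure of local injectivity away from $A^0$, or alternatively one checks that a collapse move on $D$ would close up to a collapse on $S^2$, which cannot reach a point). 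Hence running the collapsibility hypothesis on $D$ (after taking a maximal tower lift via Lemma~\ref{lem:towers exist}, so that the hypothesis of unicollapsibility applies to the tower) yields a contradiction, since the tower is simply connected, has a 2-cell, and admits no collapse.

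For $(2)\Rightarrow(3)$: let $Z\subset\widetilde X$ be a compact subcomplex. I would argue that if $Z$ does not collapse to a graph, then after collapsing away all free faces we reach a subcomplex $Z'\subset\widetilde X$ with a 2-cell but no free 1-cell — i.e.\ every 1-cell in $Z'$ lies on the boundary of at least two 2-cells (or twice on the boundary of one). Such a $Z'$, being simply connected (it sits inside $\widetilde X$, and collapses preserve homotopy type, but more carefully one should pass to a connected component and note $\widetilde X$ is simply connected so $Z'$ has free fundamental group — actually one wants $\pi_2$), supports a near-immersion of a 2-sphere: thicken $Z'$ by doubling, or more precisely use that a reduced spherical picture exists because $Z'$ has no free faces and trivial $\pi_1$. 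This is the classical equivalence between "every compact subcomplex of $\widetilde X$ collapses to a graph" and asphericity/DR; I would cite or mimic the Corson--Trace argument \cite{CorsonTrace2000}, adapting it to allow non-immersed attaching maps by first passing to a subdivision as indicated after Lemma~\ref{lem:towers exist}.

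For $(3)\Rightarrow(1)$: given an immersion $Y\rightarrow X$ with $\pi_1 Y=1$, take its lift $Y\rightarrow\widetilde X$ and let $T=\mathrm{image}(Y\rightarrow\widetilde X)$, a compact subcomplex of $\widetilde X$. By hypothesis $T$ collapses to a graph, i.e.\ there is a sequence of collapses of $T$ along free faces ending in a graph $\Gamma$; since $\Gamma\subset\widetilde X$ is a subgraph of a tree, $\Gamma$ is a tree, hence collapses to a point. So $T$ collapses to a point. Now I would pull back this sequence of collapses along the immersion $Y\rightarrow T$: each free face of a cell in $T$ has preimages in $Y$ that are free faces of the corresponding cells (here the immersion property is what guarantees free faces lift to free faces), and iterating, $Y$ itself collapses to a point, which is exactly unicollapsibility.

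The main obstacle I anticipate is the implication $(1)\Rightarrow(2)$, specifically verifying cleanly that a reduced spherical diagram (near-immersion of $S^2$) cannot collapse to a point and interacts correctly with the tower lift — one must be careful that the collapsing moves available on the punctured sphere $D$ are exactly the "wrong" ones (they would have to be collapses of 2-cells along free boundary edges, but a sphere has no boundary), and that passing to a maximal tower lift does not accidentally create collapses. Handling the non-immersed attaching maps via subdivision, so that Lemma~\ref{lem:towers exist} applies, is the routine-but-fiddly part to get right.
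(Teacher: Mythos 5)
Your implication $(1\Rightarrow 2)$ has a genuine gap, and it is located exactly where you puncture the sphere. The paper's argument applies Lemma~\ref{lem:towers exist} to the closed sphere $S^2\rightarrow X$ itself: the maximal tower lift $T$ is compact, simply connected, has a 2-cell, and the tower map $T\rightarrow X$ is an immersion, so unicollapsibility forces a collapse of $T$ along a free face $e$ of some 2-cell $r$; since every edge of $S^2$ lies on exactly two 2-cell corners, any edge mapping to $e$ witnesses a failure of local injectivity of $S^2\rightarrow T$ at an interior point of that edge, hence of $S^2\rightarrow X$, contradicting the near-immersion hypothesis. Your route through the punctured diagram $D$ breaks this mechanism twice. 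First, ``$D$ cannot collapse to a point because it is reduced'' is not correct: having no cancellable pairs does not preclude collapses --- every boundary edge of $D$ lies on exactly one 2-cell of $D$, so $D$ has many free faces and may well collapse completely (a single 2-cell with embedded boundary is reduced and collapses to a point). Second, ``the tower of $D$ admits no collapse'' is unjustified and is not what you can extract from the near-immersion: a free face $e$ of that tower whose $D$-preimages all lie on $\partial D$ gives no contradiction, because in $S^2$ the second side of those edges is the removed cell $R_\infty$, whose corner may map to a different 2-cell or corner of $X$ than $r$ does. Since unicollapsibility only tells you the tower \emph{does} collapse, you never reach a contradiction; the repair is simply not to puncture and to tower-lift the closed sphere, where no free face can exist compatibly with a near-immersion.

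There is also a flaw in your $(3\Rightarrow 1)$: you claim the graph $\Gamma$ to which the image $T\subset\widetilde X$ collapses is a tree because it is ``a subgraph of a tree,'' but the 1-skeleton of $\widetilde X$ is not a tree in general, and $T$ (the image of an immersion of a simply connected complex) need not be simply connected, so $T$ need not collapse to a point at all. The paper avoids this by doing the steps in the other order: pull the collapse-to-a-graph back along the immersion $Y\rightarrow T$ (free faces pull back to free faces, as you note), obtaining that $Y$ collapses to a graph $J'$, and only then use $\pi_1 J'=\pi_1 Y=1$ to conclude $J'$ is a tree and hence collapses to a point. This part of your argument is easily repaired by that reordering. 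Your $(2\Rightarrow 3)$, citing Corson--Trace, is exactly what the paper does and is fine; the sketched ``doubling'' argument you offer as an alternative is not needed.
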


\begin{proof}
$(1\Rightarrow 2)$
Suppose  $X$ is unicollapsible.
Let $f:S^2\rightarrow X$ be a near-immersion of a 2-sphere.
By Lemma~\ref{lem:towers exist}, we have $f=p\circ \widehat f$
where
$\widehat f: S^2\rightarrow T$ is a maximal tower lift and $p:T\rightarrow X$ is a tower map.
So $T$ has a collapse along a free face $e$ of some 2-cell $r$, and this implies that $\widehat f: S^2\rightarrow T$
is not an immersion at an edge $e'$ mapping to $e$,
and hence the composition $S^2\rightarrow T \rightarrow X$ is not an immersion at $e'$.

$(2\Rightarrow 3)$
This holds by the (main part of the) Theorem of Corson and Trace in \cite{CorsonTrace2000}.

$(3\Rightarrow 1)$
Suppose $Y\rightarrow X$ is an immersion with $Y$ compact and $\pi_1Y$ trivial.
Let $\widetilde  Y$ be the image of a lift of $Y$ to $\widetilde X$.
The subcomplex $\widetilde Y$ collapses to a graph $J$.
Hence $Y$ collapses to a graph $J'$, and hence collapses to a point since $\pi_1J'=\pi_1Y=1$.
\end{proof}

\begin{prob}
Is there a characterization of bicollapsibility in the spirit of Proposition~\ref{prop:corson trace Collapsibility}?
\end{prob}

\begin{rem}Bicollapsible implies DR.
In particular, $X$ is not bicollapsible when $X$ is not aspherical.
For instance any 2-complex homeomorphic to the 2-sphere is not bicollapsible.
The simplest example of a 2-complex that isn't DR, but is still aspherical is
Zeeman's ``dunce cap'' $\langle a \mid aaa^{-1}\rangle$.

Let $X$ be the 2-complex associated to
$\langle a,b\mid ab, b\rangle$.
Then $X$ is unicollapsibile but not bicollapsible.
\end{rem}

In many natural situations a universal cover $\widetilde X$ has
 distinct 2-cells with the same boundary cycle. It will be convenient to 
 combine these ``duplicate'' 2-cells.
 
\begin{const}[quotienting duplicates]
Let $Y$ be a 2-complex.
\emph{Duplicate} 2-cells have the same boundary cycle.
Define $r:Y\rightarrow \underline Y$ to be the quotient map that identifies 
pairs of duplicate 2-cells.
Any $G$-action on $Y$ induces a $G$-action on $\underline Y$ and the map $r$ is $G$-equivariant.
We are especially interested in $r:Y\rightarrow \underline Y$ in the case that  no 2-cell of $Y$ has an attaching map that is a proper power. 
(However, it is an interesting inverse to the construction $\dot Y$ of \S\ref{sec:branched sec}, in the sense that $\underline {\dot Y} = Y$,
provided
the identification process `duplicate' is interpreted more generally so that 2-cells of $\dot Y$ project to 2-cells of $Y$ by a branched covering.)
We use the notation $\underline{\widetilde Y}$ to mean $\underline{(\widetilde Y)}$.
Of course, $\underline Y=Y$ when there are no duplicate 2-cells.
\end{const}

\begin{defn}
\label{def:n-collapsing}
Let $n\in \naturals$.
Say $X$ is \emph{$n$-collapsing}  if the following holds:
For each compact subcomplex $Y$ of $
\underline {\widetilde X}$, 
if $Y$ has at least $m\leq n$ 2-cells, then $Y$ has at least $m$ distinct collapses
along free faces.
\end{defn}

When $\widetilde X$ has no duplicate 2-cells,
 $1$-collapsing is equivalent to unicollapsing by
Proposition~\ref{prop:corson trace Collapsibility}.

\begin{lem}\label{lem:2-collapsing implies bicollapsible}
If $X$ is 2-collapsing, then $X$ is bicollapsible.
\end{lem}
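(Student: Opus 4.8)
The plan is to deduce bicollapsibility from 2-collapsing by using the criterion already established in Lemma~\ref{lem:bicollapse criterion}, while handling the subtlety that the definition of $n$-collapsing refers to the complex $\underline{\widetilde X}$ with duplicate 2-cells quotiented, whereas Lemma~\ref{lem:bicollapse criterion} talks about finite connected subcomplexes of $\widetilde X$ itself. So first I would take a finite connected subcomplex $Y \subset \widetilde X$ and project it to $\underline{Y} \subset \underline{\widetilde X}$ via the quotient map $r$ of the Construction on duplicates. If $Y$ is a graph we are done immediately, so assume $Y$ has at least one 2-cell. If $Y$ has two or more 2-cells with the \emph{same} boundary cycle, then already $Y$ has (at least) two collapses: each such duplicated pair gives a free-face collapse of each of the two cells against the other (this is exactly the ``proper power / identical attaching map'' phenomenon flagged in the note after Lemma~\ref{lem:bicollapse criterion}), so $Y$ has at least two collapses and we are done.

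Thus I may assume $Y$ has no two 2-cells with identical boundary cycles, so $r$ restricts to an isomorphism $Y \xrightarrow{\ \sim\ } \underline{Y}$ onto a subcomplex of $\underline{\widetilde X}$, and free-face collapses correspond under this isomorphism. Now apply the hypothesis that $X$ is 2-collapsing to $\underline{Y}$: if $\underline{Y}$ has $m \leq 2$ 2-cells then it has at least $m$ collapses along free faces. If $m = 1$, then $\underline{Y}$ has at least one collapse — but I need to conclude $\underline Y$ is the closure of that single 2-cell, which requires arguing that there are no extra graph-parts; indeed the collapse of the single 2-cell leaves a graph, and since $\underline{Y}$ is simply connected (being isomorphic to $Y \subset \widetilde X$ which is simply connected as a subcomplex of a simply-connected complex — here one uses that subcomplexes of CAT-type/aspherical universal covers are simply connected... actually subcomplexes of $\widetilde X$ need not be simply connected in general, so I will instead argue directly: $\underline{Y}$ with one 2-cell and one collapse is, after collapsing the 2-cell, a finite graph, and if that graph has any edge not collapsed into a point we can still see $\underline Y$ as cl$(r)$ union a graph, yielding extra spur collapses unless the graph is a single point). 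More carefully: if $\underline Y$ has a single 2-cell and a free face, then after collapsing the 2-cell we obtain a graph $J$; either $J$ is a point, in which case $\underline Y = \mathrm{cl}(r)$ and $Y$ is the closure of a single 2-cell that collapses, matching the middle alternative of Lemma~\ref{lem:bicollapse criterion}'s hypothesis; or $J$ has at least one edge, hence (being finite) has a valence-1 vertex, producing a spur collapse of $\underline Y$ distinct from the $2$-cell collapse, so $\underline Y$ (hence $Y$) has two collapses. If $m \geq 2$, then by hypothesis $\underline Y$ has at least $2$ collapses directly. Transporting back along the isomorphism $Y \cong \underline Y$, in every case $Y$ is a graph, or the closure of a single 2-cell that collapses, or has at least two collapses of 2-cells along free faces — which is exactly the trichotomy required by Lemma~\ref{lem:bicollapse criterion}. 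Hence $X$ is bicollapsible.

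The step I expect to be the main obstacle — or at least the one needing the most care — is the bookkeeping around duplicate 2-cells: making sure that the reduction to ``$Y$ has no duplicated 2-cells'' is airtight (in particular that duplicated 2-cells genuinely give two distinct free-face collapses, and that this is what we want and not a degenerate situation like a single 2-cell attached by a proper power, which is correctly excluded), and that a free-face collapse in $Y \subset \widetilde X$ matches a free-face collapse in $\underline Y \subset \underline{\widetilde X}$ under the quotient. A secondary subtlety is the $m=1$ analysis: Definition~\ref{def:n-collapsing} only guarantees ``at least one collapse'' for a one-2-cell subcomplex, whereas Lemma~\ref{lem:bicollapse criterion} wants either the closure of a single collapsing 2-cell \emph{or} two collapses, so I must run the little graph argument above to split into those two cases. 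Once those points are handled, the implication follows formally.
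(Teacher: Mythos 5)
Your reduction to the duplicate-free case fails, and it fails in the opposite direction from what you claim. If $r_1,r_2$ are distinct 2-cells of $Y\subset\widetilde X$ with the same boundary cycle, then every 1-cell of that cycle lies in the boundary of both $r_1$ and $r_2$, so it is a free face of neither; when the common boundary is embedded, $\mathrm{cl}(r_1)\cup \mathrm{cl}(r_2)$ is a 2-sphere and admits no collapse whatsoever. Duplicates (like proper powers) are obstructions to collapsing, not sources of two collapses---this is precisely why Definition~\ref{def:n-collapsing} is phrased in terms of $\underline{\widetilde X}$, and why the note after Lemma~\ref{lem:bicollapse criterion} lists ``no identical attaching maps'' as an additional hypothesis rather than a consequence. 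The remark following Lemma~\ref{lem:2-collapsing implies bicollapsible} does assert that 2-collapsing rules out duplicates in $\widetilde X$, but that requires its own argument; it certainly does not follow from ``collapse each cell against the other,'' so the step you flagged as the main obstacle is indeed where your proof breaks.

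The $m=1$ case is also broken. Lemma~\ref{lem:bicollapse criterion} quantifies over all finite connected subcomplexes of $\widetilde X$, including non-simply-connected ones, and its alternatives (like condition (2) of Definition~\ref{def:bicollapsible}) require two distinct cells of the subcomplex each having a free face \emph{in that subcomplex}. The spur you produce is a valence-1 vertex of the graph $J$ obtained \emph{after} performing the 2-cell collapse; it need not be a spur of $\underline Y$ itself, and $J$ need not have a valence-1 vertex at all. Concretely, the presentation complex of $\langle a,b\mid a\rangle$ is 2-collapsing, yet the subcomplex of $\widetilde X$ consisting of one disk, the adjacent $b$-edge, and the bare $a$-loop at its far endpoint is not a graph, is not the closure of a single 2-cell, and has exactly one cell that collapses; so 2-collapsing does not deliver the hypothesis of Lemma~\ref{lem:bicollapse criterion} for arbitrary subcomplexes, and your route through that lemma cannot be completed as stated. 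The paper's proof avoids both issues by arguing directly from Definition~\ref{def:bicollapsible}: lift the immersion $Y\rightarrow X$ with $\pi_1Y=1$ to $\widetilde X$, apply 2-collapsing to the image, and pull the collapses back, using that simple-connectivity of the domain forces any attached graph pieces of $Y$ to be trees, whose spurs furnish the second collapsing cell in the one-2-cell case.
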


\begin{proof}
If $Y\rightarrow X$ is an immersion with $\pi_1Y=1$, consider a lift $Y\rightarrow \widetilde X$. 

If the image has at least two cells, or a single 2-cell with a graph attached, we are done. 
If the image consists of a single 2-cell, this provides a collapse of each of the 2-cells of $Y$. The remaining details are left to the reader.
\end{proof}

\begin{prob}Is $2$-collapsing the same as bicollapsible?
Note that bicollapsible doesn't assert anything about subcomplexes of $\widetilde X$
that aren't simply-connected.
\end{prob}

\begin{rem}
Note that 2-collapsing implies that $\widetilde X$ has no duplicate 2-cells and no 2-cell of $X$ is attached by a proper power.
\end{rem}

\begin{rem}
The proof of Proposition~\ref{prop:CAT0 bi} actually implies that CAT(0)~$2$-complexes are~$2$-collapsing.
\end{rem}

Both Propositions~\ref{prop:CAT0 bi} and \ref{prop:small cancellation bicollapse} can be strengthened with Definition~\ref{def:n-collapsing} in mind.
\begin{prop}\label{prop:C(6) is 3-collapsing}
$C(6)$ and $C(4)$-$T(4)$ complexes are 3-collapsing.
\end{prop}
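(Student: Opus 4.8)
The plan is to adapt the proof of Proposition~\ref{prop:small cancellation bicollapse}, using Lemma~\ref{lem:rings} and the function $g:\widetilde X\rightarrow[0,\infty)$, but now tracking collapses more carefully so that a subcomplex $Y\subset \underline{\widetilde X}$ with $m\leq 3$ 2-cells is shown to have at least $m$ distinct collapses. Since we already know from Proposition~\ref{prop:small cancellation bicollapse} that $X$ is bicollapsible (and in particular $1$- and $2$-collapsing are already in hand via Lemma~\ref{lem:bicollapse criterion}), the only genuinely new case is $m=3$: given $Y$ with exactly three 2-cells $R_1,R_2,R_3$, produce three distinct free faces along which distinct 2-cells collapse.

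First I would fix $b\in Y^0$ and apply Lemma~\ref{lem:rings} to get $g:Y\rightarrow[0,\infty)$. Let $N$ be maximal with $g(R)=[N,N+1]$ for some 2-cell $R$ of $Y$. By Remark~\ref{rem:nontrivial top} and Condition~\eqref{ring:next}, every 2-cell $R$ at the top level $[N,N+1]$ has its distinguished subpath $\gamma$ not traversed by any other 2-cell of $Y$ (as none lies at level $[N+1,N+2]$), so each top-level 2-cell collapses along an edge of its own $\gamma$, and these edges are pairwise distinct across distinct top-level 2-cells. So if all three 2-cells sit at level $[N,N+1]$ we immediately get three distinct collapses. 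If two sit at the top level, we get two distinct collapses there, and for the third we pass to the ``next lower maximum'': the highest level $[N',N'+1]<[N,N+1]$ containing a 2-cell; since no 2-cell of $Y$ occupies level $[N'+1,N'+2]$ except possibly the top-level ones whose $\gamma$-edges we have already used, we argue (using Remark~\ref{rem:nontrivial top}, and in the $C(4)$-$T(4)$ case the $T(4)$ argument exactly as in the proof of Proposition~\ref{prop:small cancellation bicollapse}) that the level-$N'$ 2-cell has a collapse along an edge not traversed by the two higher 2-cells, giving the third distinct collapse. The remaining configuration is when all three 2-cells live at three distinct levels $N>N_2>N_3$; here the top one collapses along its $\gamma$, and we recursively apply the same ``next lower maximum'' reasoning to the subcomplex obtained by deleting the collapsed free face, peeling off one new distinct collapse at each of the lower levels.

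The main obstacle will be bookkeeping the distinctness of the collapsing edges when 2-cells at adjacent levels interact: a priori an edge of $\gamma$ for a lower 2-cell $R'$ could coincide with an edge used by a higher 2-cell, or could lie on $\boundary R$ for the unique higher 2-cell $R$, which is precisely the situation the $C(6)$ versus $C(4)$-$T(4)$ dichotomy in the proof of Proposition~\ref{prop:small cancellation bicollapse} was designed to rule out. I would therefore isolate a lemma of the following shape: if $R$ sits at level $[n,n+1]$ and $R'\neq R$ sits at level $[n-1,n]$ in a $C(6)$ $[$or $C(4)$-$T(4)$$]$ complex, then $R'$ has a free face along an edge of $\gamma'$ that is neither on $\boundary R$ nor equal to a free face already selected for 2-cells at levels $\geq n$; in the $C(6)$ case this follows since $\gamma'$ is a concatenation of at least two nontrivial pieces and at most one of them can lie on $\boundary R$, while in the $C(4)$-$T(4)$ case it follows from the $T(4)$ obstruction applied to the triple $R_1',R_2',R$ together with the fact that at most one 2-cell at level $\geq n$ other than $R$ neighbors $R'$. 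Iterating this lemma down at most three levels handles all the $m=3$ configurations. The argument for general $n$ is the same with a longer iteration, which is why the result is stated only for $n=3$ here; I would remark that nothing obstructs the extension beyond noting that the combinatorial case analysis grows.
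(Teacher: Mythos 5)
Your overall route is the intended one (the paper itself only says the proof is an application of Lemma~\ref{lem:rings}, as in Proposition~\ref{prop:small cancellation bicollapse}), but the details you supply break down at exactly the configuration that makes $3$-collapsing harder than bicollapsibility. The problem is your key lemma, which is formulated for ``the unique higher 2-cell $R$.'' After the top level $[N,N+1]$ contributes two collapses, the remaining 2-cell $R'$ at level $[N-1,N]$ may have \emph{both} top cells $R_1,R_2$ traversing edges of its arc $\gamma'$. Remark~\ref{rem:nontrivial top} only says that in the $C(6)$ case $\gamma'$ is not a single piece; it does not prevent $\gamma'$ from being covered by one piece shared with $\partial R_1$ and another shared with $\partial R_2$, and in the $C(4)$-$T(4)$ case it only says $\gamma'$ is nontrivial, so $\gamma'$ may even be a single piece lying on one of them. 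In those situations there is no free face of $R'$ on $\gamma'$ at all, so ``a collapse along an edge not traversed by the two higher 2-cells'' cannot be extracted from the remark, and your $C(4)$-$T(4)$ assertion that at most one 2-cell at level $\geq n$ other than $R$ neighbours $R'$ is unjustified. A correct argument must be prepared to find the third free face elsewhere on $\partial R'$ (for instance on $\alpha'$, whose edges lie at level $N-1$ and hence on no other 2-cell of $Y$), and, when that fails, must exclude the covering configuration using genuinely more of the hypotheses --- a piece count on $\partial R'=\alpha'\varepsilon_1'\beta_1'\gamma'\beta_2'\varepsilon_2'$ against $C(6)$, connectivity of intersections of pairs of 2-cells, or the $T(4)$ condition applied to the triple $R_1,R_2,R'$ --- none of which your lemma touches. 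Three pairwise-adjacent hexagons in the hexagonal tiling illustrate the shape of the difficulty: the lower cell's free faces can lie on its bottom arc rather than on $\gamma'$.

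Two smaller points. Definition~\ref{def:n-collapsing}, as it is used in Theorem~\ref{thm:123} and Lemma~\ref{lem:2cell intersections}, requires every compact subcomplex with \emph{at least} three 2-cells (possibly many more) to admit three distinct collapses, and these must be simultaneous free faces of $Y$ itself; so restricting to exactly three 2-cells, and ``peeling off'' collapses of the complex obtained by deleting an already-collapsed cell, are not legitimate as stated (a face freed by a deletion need not be free in $Y$ --- in the three-level case you can instead select each free face directly in $Y$ along $\gamma$ minus the boundary of the unique cell above). Finally, your closing claim that nothing obstructs the extension to general $n$ contradicts the paper's remark that $C(6)$ complexes are not always $7$-collapsing and $C(4)$-$T(4)$ complexes are not always $5$-collapsing (surround a 2-cell by others); this is a symptom of the same gap: the entire content of the bound $3$ is ruling out a cell whose exposed arc is covered by its higher neighbours, which the cited remark alone does not do.
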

As with Proposition~\ref{prop:small cancellation bicollapse}, the proof is an application of Lemma~\ref{lem:rings} and the details are left to the reader. 
Note that $C(6)$ complexes are not always 7-collapsing and $C(4)$-$T(4)$ complexes are not always 5-collapsing. Indeed, we can surround a 2-cell by other 2-cells to see this.

In analogy with Proposition~\ref{prop:CAT0 bi}, we have:

\begin{prop}\label{CAT(0) is 3-collapsing}
Let $X$ be a CAT(0) 2-complex.
Suppose each $2$-cell is convex.
Then $X$ is $3$-collapsing.
\end{prop}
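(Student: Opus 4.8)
The plan is to adapt the proof of Proposition~\ref{prop:CAT0 bi} by iterating the ``farthest point'' argument to extract three collapses instead of two. Let $Y\subset\widetilde X$ be a compact subcomplex with $m\le 3$ 2-cells; since the 2-cells are convex, $Y$ is itself a CAT(0) 2-complex whose 2-cells are convex (hence their boundary cycles are embedded and distinct, so $\underline Y=Y$ and duplicates are not an issue). If $m=0$ then $Y$ is a graph and the 1-collapsing case of Proposition~\ref{prop:corson trace Collapsibility} gives the required collapse; if $m=1$ the argument below specialises to produce one collapse. So the interesting cases are $m=2$ and $m=3$.

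First I would establish the basic extraction step: for any point $x\in Y$, if $z\in Y$ realizes $\dist(\cdot,x)$ on $Y$ then $z\in Y^0$ (by the CAT(0) inequality, exactly as in Proposition~\ref{prop:CAT0 bi}), and $\link(z)$ is either a singleton or contains a spur — otherwise the geodesic from $x$ to $z$ extends past $z$, contradicting maximality. A singleton link at $z$ means $z$ is an endpoint of a single 1-cell (a spur), and a spur in the link means the corner of a 2-cell at $z$ is ``exposed,'' i.e.\ that 2-cell collapses along a free face incident to $z$. Either way we get a collapse, and I would record precisely \emph{which} cell this collapse belongs to: either a 1-cell with $z$ as a valence-1 vertex, or a specific 2-cell $R$ having an exposed corner at $z$.

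Next I would iterate, choosing the base points to force the collapses to involve distinct cells. Pick $p,q\in Y$ with $\dist(p,q)$ maximal; this gives collapses $c_1$ at $p$ and $c_2$ at $q$, belonging to cells $\sigma_1\ni q$ and $\sigma_2\ni p$ — since $p\ne q$ (as $Y$ is not a point) and an exposed corner/spur at $q$ cannot be a corner of a cell containing $p$ unless $Y$ is the closure of that single cell, the cells $\sigma_1,\sigma_2$ are distinct unless $Y=\mathrm{cl}(\sigma_1)$. If $Y$ equals the closure of a single 2-cell, then $m=1$ and that 2-cell collapses, done. Otherwise $c_1,c_2$ are two distinct collapses; if $m\le 2$ we are finished. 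If $m=3$, I would then take $x\in Y$ maximizing $\dist(x,\sigma_1\cup\sigma_2)$ — using that $\sigma_1\cup\sigma_2$ is a closed convex (hence nonempty, and the nearest-point projection is well-defined) subset — get a third collapse at the farthest point $z$, and argue that the cell $\sigma_3$ carrying it is disjoint from $\sigma_1\cup\sigma_2$, or at least distinct from $\sigma_1$ and $\sigma_2$, because $\dist(z,\sigma_1\cup\sigma_2)>0$ forces $z\notin\sigma_1\cup\sigma_2$; if instead every point of $Y$ lies in $\sigma_1\cup\sigma_2$ then $Y$ has at most $2$ cells plus attached trees, and the attached trees (if any) supply extra spurs, again giving three distinct collapses.

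The main obstacle I anticipate is the bookkeeping in the $m=3$ step: ensuring that the third collapse is genuinely distinct from the first two, and handling the degenerate configurations where the farthest set $\sigma_1\cup\sigma_2$ already contains all of $Y^{(2)}$, or where $\link(z)$ being a singleton arises from a 1-cell that is itself a free face of one of the already-counted 2-cells. The convexity hypothesis on the 2-cells is exactly what rules out pathologies like a 2-cell whose boundary wraps around and could be exposed at two corners simultaneously in a way that collapses the same cell twice; I would make sure to invoke convexity at each point where I claim a corner exposure at one vertex is unrelated to a collapse at another vertex. Once the distinctness bookkeeping is set up, the ``farthest point'' machinery from Proposition~\ref{prop:CAT0 bi} does all the real work.
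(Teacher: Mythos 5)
Your overall strategy (iterating the farthest-point argument of Proposition~\ref{prop:CAT0 bi}) is different from the paper's, but as written it has a genuine gap at its central step. The extraction of the third collapse rests on the assertion that $\sigma_1\cup\sigma_2$ is a closed \emph{convex} subset, which is false: a union of two convex cells is essentially never convex, and $\dist(\cdot,\sigma_1\cup\sigma_2)=\min\bigl(\dist(\cdot,\sigma_1),\dist(\cdot,\sigma_2)\bigr)$ is not a convex function. The ``otherwise the geodesic extends past $z$, contradicting maximality'' mechanism needs convexity of the distance function (so that extending the geodesic beyond the farthest point strictly increases the distance to the target set); for a non-convex target it simply fails. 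Concretely, take three unit squares glued in a row with $\sigma_1,\sigma_2$ the two end squares: the maximizers of $\dist(\cdot,\sigma_1\cup\sigma_2)$ form a segment crossing the middle square, so the farthest point need not lie in $Y^0$ and need not exhibit a spur in its link; the conclusion holds there for other reasons, but not via your mechanism. Passing to the convex hull does not repair this, since a third 2-cell can lie inside the hull, making the maximum distance zero without $Y\subset\sigma_1\cup\sigma_2$. A second unjustified claim is that the two collapses at the diameter-realizing pair $p,q$ lie in distinct cells unless $Y=\text{cl}(\sigma_1)$: both can be exposed corners of one long 2-cell with a tree or a second cell attached along its side, and indeed the proof of Proposition~\ref{prop:CAT0 bi} needs an extra step precisely for that configuration.

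There is also a mismatch with the definition you must verify: you only treat subcomplexes with $m\le 3$ 2-cells, whereas Definition~\ref{def:n-collapsing} (which is what Lemma~\ref{lem:2cell intersections} and Theorem~\ref{thm:123} later rely on) requires at least $\min(m,3)$ collapses for compact subcomplexes with arbitrarily many 2-cells. The paper's proof is structured to handle this: for planar $Y$ it fixes a point $y$ in the interior of a 2-cell and considers the circle of directions at $y$; every maximal ray from $y$ terminates on a cell that collapses, and if only two collapsing cells occurred then that circle would lie in two 2-cells whose intersection is connected by convexity, forcing $Y$ to consist of exactly those two (collapsing) cells. For nonplanar $Y$ it takes an edge lying on at least three 2-cells and three perpendicular rays, with convexity of the cells preventing two of the terminal collapses from coinciding. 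Some such ``global'' argument, rather than an iterated farthest-pair step aimed at a non-convex union, appears to be needed; as it stands your third collapse is not established.
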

\begin{proof}
Let $Y\subset X$ be a compact subcomplex.
We first consider the case where $Y$ is planar.
Let $y$ be a point in the interior of a 2-cell.
Let $S^1$ be the space of directions about $y$,
and observe that there is a continuous map $Y-\{y\}\rightarrow S^1$ that sends each $p\in Y$ to the ray associated to the geodesic from $y$ to $p$. Note that for each point in $S^1$, there is a ray emanating from $y$
that terminates at a point on a closed 1-cell or 0-cell that collapses.
If there were only two such collapses, then this copy of $S^1$ would be contained in the union of two 2-cells of $Y$. 
By convexity, these 2-cells have connected intersection, and we find that $Y$ consists of two 2-cells that both collapse.

If $Y$ is not planar, then choose $y$ in the interior of a 1-cell $e$
such that $e$ lies on the boundary of at least three 2-cells $F_1,F_2,F_3$
Choose maximal rays $r_1,r_2,r_3$ that emanate from $y$ and are perpendicular to $e$, and begin by travelling into the $F_1,F_2,F_3$.
Let $p_1,p_2,p_3$ be the endpoints of these rays.
As in the proof of Proposition there are collapses along each $p_i$.
Suppose the collapses at the cell $P_i,P_j$ associated to $p_i,p_j$ are
the same, so $P_i=P=P_j$.
By convexity the geodesic $p_ip_j= r_i\cup r_j$ lies in $P$.
But then $P$ contains points near $y$ on ``opposite sides'' of the 1-cell $e$. This is impossible.
\end{proof}

The following observation will be strengthened in  Theorem~\ref{thm:123}.
\begin{lem}\label{lem:2cell intersections}
Let $X$ be $2$-collapsing. Then each 2-cell $R$ of $\widetilde X$ has 
$\euler(\boundary R)=0$.

Let $X$ be $3$-collapsing. Then each pair of 2-cells $R_1,R_2$ of $\widetilde X$ has
$\boundary R_1 \cap \boundary R_2$ connected.
\end{lem}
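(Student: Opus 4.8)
The plan is to prove each statement by exhibiting a small subcomplex of $\widetilde X$ that would fail the $n$-collapsing hypothesis unless the desired conclusion holds.

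For the first statement, I would argue by contradiction: suppose some 2-cell $R$ of $\widetilde X$ has $\euler(\boundary R)\neq 0$. Since $\boundary R$ is a finite graph, $\euler(\boundary R)\leq 1$, with equality iff $\boundary R$ is a tree, and the only other option consistent with $\boundary R$ being the image of a circle is $\euler(\boundary R)\leq 0$; so the assumption forces $\euler(\boundary R)=1$, i.e. $\boundary R$ is a tree (equivalently, $R$ is attached by a proper power giving a contractible boundary path, or the attaching map backtracks down to a tree). Then $Y=\text{cl}(R)$ is a compact subcomplex of $\widetilde X$ with a single 2-cell. Collapsing: since $\boundary R$ is a tree, it has a leaf, which is a free face of $R$, but after collapsing $R$ across that free face the remaining tree keeps having spurs; yet these are spurs of a graph, not ``distinct collapses of 2-cells,'' so $Y$ has exactly one collapse involving a 2-cell. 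This contradicts $2$-collapsing applied to $Y$ (which has $m=1$ 2-cell and must have at least $1$ collapse — this is fine — so I actually need the right count). The cleaner route: the Remark following Lemma~\ref{lem:2-collapsing implies bicollapsible} already records that $2$-collapsing implies no 2-cell of $X$ is attached by a proper power and $\widetilde X$ has no duplicate 2-cells; combined with the fact that a $2$-collapsing complex is bicollapsible (Lemma~\ref{lem:2-collapsing implies bicollapsible}) and hence DR (Definition~\ref{def:DR}), Corollary~\ref{cor:2-cells embed in Xtilde}-type reasoning shows each 2-cell of $\widetilde X$ has embedded, hence immersed, boundary cycle, and an immersed circle in a graph has $\euler=0$. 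I would present this as the main line, invoking those earlier results.

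For the second statement, suppose $\boundary R_1\cap\boundary R_2$ is disconnected for two 2-cells $R_1,R_2$ of $\widetilde X$. Let $Y=\text{cl}(R_1)\cup\text{cl}(R_2)$, a compact subcomplex with exactly $m=2$ 2-cells (they are distinct and non-duplicate by the first part / the Remark), so by $3$-collapsing $Y$ has at least $2$ distinct collapses along free faces. The heart of the argument is to show that disconnectedness of the intersection actually forces strictly fewer — or, more precisely, to set up the count so a contradiction emerges. The idea: build a slightly larger or differently chosen subcomplex. Because $\boundary R_1\cap\boundary R_2$ has at least two components $e,e'$, the ``outside'' of $R_1\cup R_2$ near these components has at least two arcs of $\boundary R_1$ and two arcs of $\boundary R_2$ that are exposed (not shared), and one checks each shared component $e$ is \emph{not} a free face of either cell (it lies on both), while each exposed arc yields collapses only along its own cell. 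Counting: $Y$ can collapse $R_1$ (across an exposed arc), then what remains is $\text{cl}(R_2)$ possibly with a graph attached — but after $R_1$ is gone, the former shared edges $e,e'$ become free faces of $R_2$, so there is effectively only this one ``second cell'' $R_2$ to collapse, giving at most ... here is where I must be careful: naively $Y$ still has two collapses ($R_1$ then $R_2$), so disconnectedness alone does not immediately contradict $3$-collapsing with $m=2$. The correct move is to use $3$-collapsing with $m=3$: glue in a third 2-cell, or observe that if $\boundary R_1\cap\boundary R_2$ is disconnected then a minimal disk diagram filling one of the resulting ``bigon-like'' regions between $e$ and $e'$ contributes a third 2-cell $R_3$, producing a subcomplex with $3$ 2-cells but — because $R_1,R_2$ wrap around $R_3$ leaving it with no free face — only $2$ collapses, contradicting $3$-collapsing.

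The main obstacle is this bookkeeping in the second statement: correctly identifying which edges of $\boundary R_1\cup\boundary R_2$ (and of the interpolating disk diagram) are free faces after successive collapses, and engineering the subcomplex so that the number of 2-cells strictly exceeds the number of available collapses. I expect the cleanest formulation uses a minimal disk diagram: take the two arcs $\gamma_1\subset\boundary R_1$ and $\gamma_2\subset\boundary R_2$ running between two components of $\boundary R_1\cap\boundary R_2$, fill the closed loop $\gamma_1\gamma_2^{-1}$ by a minimal disk diagram $D$, and apply $3$-collapsing (or just bicollapsibility, via Lemma~\ref{lem:2-collapsing implies bicollapsible} and the generalized Dehn property) to $\text{cl}(R_1)\cup\text{cl}(R_2)\cup D$; minimality of $D$ rules out cancellable pairs with $R_1,R_2$ (as in the proof of Corollary~\ref{cor:2-cells embed in Xtilde}), and one shows the only collapses are of $R_1$ and $R_2$ along their exposed arcs, contradicting the collapse count once $\area(D)\geq 1$, which forces $D$ to be trivial and hence $\boundary R_1\cap\boundary R_2$ connected. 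I would write the proof along exactly these lines, reusing the minimal-disk-diagram / no-cancellable-pair technique already deployed for Corollary~\ref{cor:2-cells embed in Xtilde}.
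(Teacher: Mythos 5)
Your treatment of the second claim is essentially the paper's own argument. The paper takes $Z=\mathrm{cl}(R_1)\cup\mathrm{cl}(R_2)$, notes that disconnectedness of $\partial R_1\cap\partial R_2$ forces $\pi_1Z\neq 1$, attaches reduced disk diagrams of minimal area along normal generators of $\pi_1Z$, and observes that in the image $Y\subset\widetilde X$ only $R_1,R_2$ can collapse while $Y$ has at least three 2-cells, contradicting $3$-collapsing; your version, filling the single loop $\gamma_1\gamma_2^{-1}$ by a minimal diagram $D$, is the same counting argument. One step you assert without justification is that $\area(D)=0$ gives connectedness: it does, but you should say why --- a zero-area diagram is a tree, and the image of the geodesic in that tree between the lifts of the two endpoints is contained both in the image of $\gamma_1\subset\partial R_1$ and in the image of $\gamma_2\subset\partial R_2$, hence it is a connected subset of $\partial R_1\cap\partial R_2$ joining the two chosen components, a contradiction.

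The genuine gap is in the first claim. The dichotomy ``$\euler(\partial R)\neq 0$ forces $\partial R$ to be a tree'' is false: $\partial R$ is the image of the attaching circle in $\widetilde X$, so one can also have $\euler(\partial R)\leq -1$ (the image contains two or more independent cycles, e.g.\ a theta graph traversed as $pq^{-1}qr^{-1}$), and this case is never addressed in your sketch. It cannot be dismissed cheaply, since in that example the edges traversed once are free faces of $R$, so $\mathrm{cl}(R)$ alone does not violate $2$-collapsing; some filling construction is needed. Your fallback via Corollary~\ref{cor:2-cells embed in Xtilde} is not available either: that corollary assumes $X$ is compact and that attaching maps are immersions, neither of which is a hypothesis of this lemma (the immersed-boundary convention is only imposed later, for branched complexes), so ``embedded, hence $\euler=0$'' is not earned from the $n$-collapsing hypothesis. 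The intended proof of the first claim is the same construction you used for the second (the paper's ``the first is similar''): if $\euler(\partial R)=1$ then $\partial R$ is a tree, every edge of it is traversed at least twice by the attaching map --- so, contrary to your assertion, a leaf is \emph{not} a free face of $R$ --- and $\mathrm{cl}(R)$ is a compact subcomplex with one 2-cell and no collapse of a 2-cell, already a contradiction; if $\euler(\partial R)\leq -1$ then $\pi_1(\mathrm{cl}(R))\neq 1$, one attaches minimal-area reduced diagrams killing it, and the resulting subcomplex has at least two 2-cells while only $R$ can collapse, contradicting $2$-collapsing.
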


\begin{proof}
We prove the second claim, as the first is similar.
If $\boundary R_1\cap \boundary R_2$ is disconnected.
Then the subcomplex consisting of the union $Z=R_1\cup R_2$  of the closed cells,
has the property that $\pi_1\neq 1$. We may therefore choose reduced disk diagrams $D_1,\ldots, D_k$ mapping to $\widetilde X$, such that $\boundary_\p D_i$ normally generate $\pi_1Z$. Moreover choose each $D_i$ so that it is minimal area with this property.
Let $Z'= Z \cup \bigcup D_i$ be formed from $Z$ by attaching  $D_i$ along $\boundary_\p D_i$ for each $i$. Note that  $Z'$ cannot collapse along any free face except on $R_1,R_2$. Let $Y$ be the image of $Z'$ in $\widetilde X$.
Then $Y$ has at least three 2-cells, but at most two collapses.
\end{proof}

\section{Branched complexes}
\label{sec:branched sec}
\begin{conv}
We shall now work under the assumption that all 2-cells in a 2-complex $X$ have immersed boundary cycle.
\end{conv}

Let $X$ be a 2-complex with 1-skeleton $X^1$ and with 2-cells $\{C_i\}_{i\in I}$ having boundary paths $\boundary_\p C_i=w_i$. 
We form a new  2-complex $\dot X$ with the same 1-skeleton,
but whose 2-cells $\{\dot C_i\}$ have boundary paths $\boundary_p \dot C_i =w_i^{n_i}$,
where $n_i\geq 2$ for each $i\in I$.
We call $\dot X$ the \emph{branched complex} associated to $X$
and  $\{n_i\}_{i\in I}$.

\begin{lem}\label{lem:bicollapse dehn}
Let $\dot X$ be a branched complex associated to a 2-complex $X$. 
If $X$ is bicollapsible then $\dot X$ has the strong generalized Dehn property.
\end{lem}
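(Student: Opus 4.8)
\textbf{Proof plan for Lemma~\ref{lem:bicollapse dehn}.}

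The plan is to take a minimal disk diagram $D \rightarrow \dot X$ and analyze it via the tower machinery of Lemma~\ref{lem:towers exist}, reducing to a statement about the unbranched complex $X$ where bicollapsibility applies. First I would observe that each 2-cell $\dot C_i$ of $\dot X$ has boundary path $w_i^{n_i}$ with $n_i \geq 2$; in particular its boundary cycle is a proper power, so it is far from immersed. The key move is to pass from a disk diagram $D$ in $\dot X$ to an associated diagram (or immersed complex) in $X$: concretely, I would take a maximal tower lift $D \rightarrow T$ of $D \rightarrow \dot X$ and then ``unfold'' each 2-cell of $T$ whose boundary is $w_i^{n_i}$ into the corresponding 2-cell $C_i$ of $X$ with boundary $w_i$, together with $(n_i-1)$-fold identifications of the sheets. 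Since $X$ is bicollapsible and the unfolded complex $Y$ maps by an immersion to $X$ and has trivial $\pi_1$ (being built from the contractible $T$), Definition~\ref{def:bicollapsible} gives that either $Y$ is trivial or $Y$ has two distinct cells that collapse along free faces.

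Second, I would translate collapses in $Y$ back to spurs, shells, and cutcells in $D$. A free face of a 2-cell in $Y$ is a 1-cell traversed exactly once by $\partial_\p C_i = w_i$ and not shared with another 2-cell; when we ``re-fold'' to recover a 2-cell $\dot C_i$ with boundary $w_i^{n_i}$, such a free face no longer produces a collapse of $\dot C_i$ inside $T$, but it produces instead an \emph{outer} arc $Q$ of $\partial_\p \dot C_i$ whose interior meets no other cell, and since $Q$ is essentially $w_i$ (or a subpath) while the complement $S$ is at least $(n_i - 1)|w_i| \geq |w_i| \geq |Q|$ --- one must be slightly careful about the length comparison, and this is exactly the quantitative input that forces $n_i \geq 2$ --- we obtain a shell of $\dot C_i$ in $T$, hence (pushing down through the tower map, which only identifies cells) a shell or a cutcell in $D$, or a spur if the image degenerates. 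The case where $Y$ collapses to a point along a single 2-cell corresponds to $D$ being a single 2-cell of $\dot X$ or having a spur. Two distinct collapses in $Y$ thus yield at least two distinct spurs/shells/cutcells in $D$, which is precisely the strong generalized Dehn property.

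The main obstacle I expect is bookkeeping the correspondence between collapses in the unfolded complex $Y$ and the triad {spur, shell, cutcell} in $D$ --- in particular handling the case where a collapse of $Y$ comes from a 1-cell shared by two distinct sheets of the same branched 2-cell (so that after re-folding it becomes a free-face-free configuration), versus a genuine free face of $C_i$ in $X$; and ensuring that two \emph{distinct} collapses in $Y$ do not collapse to a single feature in $D$ after the tower map identifications. A subtle point is that the tower map $T \rightarrow \dot X$ may identify 2-cells, and minimality of $D$ must be used to rule out cancellable pairs that would let us reduce area (compare the argument in Corollary~\ref{cor:2-cells embed in Xtilde}); this is where I would invoke minimality of $\area(D)$ to guarantee $D \rightarrow T$ does not collapse the relevant cells. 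Once the dictionary is set up carefully, the length inequality $|S| < |Q|$ required for a shell follows automatically from $n_i \geq 2$, and the ``strong'' (two features) conclusion follows from the ``two distinct collapses'' clause of bicollapsibility.
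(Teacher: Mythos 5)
Your overall strategy---tower machinery plus bicollapsibility of $X$, then a dictionary from collapses back to spurs/shells/cutcells of $D$---is the paper's strategy, but two steps in your execution have genuine problems. First, the ``unfolding'' of the tower. You take a maximal tower lift of $D\rightarrow \dot X$ and then propose to replace each 2-cell of $T$ whose boundary maps to $w_i^{n_i}$ by a 2-cell of boundary $w_i$ with ``$(n_i-1)$-fold identifications of the sheets.'' In a tower over $\dot X$ the attaching path of a 2-cell is a closed path in $T^1$ mapping to $w_i^{n_i}$, but its $w_i$-period need not close up in $T^1$ (think of a cyclic cover), so there is in general no 2-cell with boundary $w_i$ to unfold to; and if instead you quotient each 2-cell by its $\mathbb{Z}_{n_i}$-rotation, the resulting complex need not immerse in $X$ (further folding may be needed), and its simple connectivity is not automatic---so you cannot yet invoke Definition~\ref{def:bicollapsible}. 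The paper sidesteps all of this by composing first: it takes the map $D\rightarrow \dot X\rightarrow X$ (cellular after subdivision) and applies Lemma~\ref{lem:towers exist} to \emph{that} map, so the maximal tower $T$ is already an immersed, compact, simply-connected complex over $X$ with 2-cells of boundary $w_i$; no unfolding construction is needed. Your route can be repaired, but only by in effect rebuilding this composite-then-tower step.

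Second, the key translation of a collapse into a feature of $D$ is stated with the inequality backwards. You take the outer arc $Q$ to be ``essentially $w_i$'' and the complement $S$ to have length at least $(n_i-1)|w_i|$, i.e.\ $|S|\ge |Q|$; by the paper's definition of shell ($\partial_\p R=QS$ with $|Q|>|S|$ and $Q\subset\partial_\p D$) this is precisely \emph{not} a shell, so the conclusion ``we obtain a shell'' does not follow from what you wrote. The correct mechanism (the paper's ``\'a la Newman'' remark) is the opposite: if $C$ collapses in $T$ along a free face $e$ with $\partial_\p C=Se$, then for a 2-cell $\dot C$ of $D$ lying over $C$ with $\partial_\p\dot C=(Se)^{n}$, reducedness of $D$ forces all $n$ copies of $e$ onto $\partial D$; hence the \emph{inner} path of $\dot C$ is confined to a single copy of $S$, the outer path contains the remaining $(n-1)$ periods, and $\dot C$ is a shell (or a cutcell if its boundary meets $\partial_\p D$ in several components), with $n_i\ge 2$ entering exactly here. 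Relatedly, features of $T$ must be pulled back along the lift $D\rightarrow T$ (preimages), not ``pushed down through the tower map''; with that orientation fixed, two distinct collapses in $T$ yield two distinct spurs/shells/cutcells in $D$, which is the strong generalized Dehn property.
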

Likewise, if $X$ is unicollapsible then
$\dot X$ has the generalized Dehn property.
\begin{proof}
Let $D\rightarrow \dot X$ be a reduced disk diagram,
and  by composing with $\dot X\rightarrow X$ we have a map
 $D\rightarrow X$.
Lemma \ref{lem:towers exist} guarantees the existence of a maximal
tower lift $D\rightarrow T$ of $D\rightarrow X$ where $T\rightarrow  X$ 
is a tower map. Note that $T$ is compact and $\pi_1T=1$.
Since $X$ is bicollapsible, either $T$ is trivial, or $T$ contains two distinct cells $C_1,C_2$ that collapse along free faces.
If $T$ is a single 0- or 1-cell then so is $D$. If $T$ consists of a single 2-cell, then $D$ is a union of 2-cells joined along vertices, and hence the claim holds.
Finally, if $T$ contains two distinct cells $C_1,C_2$ that collapse along free faces, then $D$ contains at least two spurs and/or shells and/or cutcells.
(While each collapse of a 1-cell in $T$ induces a collapse of one or more 1-cells in $D$, a collapse of a 2-cell in $T$ induces one or more shells and/or cutcells in $D$.)
\end{proof}

\begin{thm}\label{thm:bicollapsible branched is dehn} Let $X$ be compact and bicollapsible.
Then $\dot X$ has the Dehn property. Hence $\pi_1\dot X$ is word-hyperbolic.
\end{thm}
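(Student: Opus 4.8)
The plan is to combine the two results that have just been assembled: Lemma~\ref{lem:bicollapse dehn}, which tells us that $\dot X$ has the \emph{strong} generalized Dehn property, and Corollary~\ref{hyperbolic}, which says that any 2-complex with the strong generalized Dehn property has the (ordinary) Dehn property. So the first sentence of the theorem is essentially a two-line deduction: $X$ bicollapsible $\Rightarrow$ $\dot X$ has the strong generalized Dehn property $\Rightarrow$ $\dot X$ has the Dehn property.

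For the hyperbolicity conclusion, I would invoke the classical fact recorded in the Remark after the definition of the generalized Dehn property: if $X$ is compact and has the Dehn property, then $\widetilde X$ has a linear isoperimetric function, whence $\pi_1 X$ is word-hyperbolic (see e.g.\ \cite{ABC91}). The only point needing a word of justification is that $\dot X$ is compact: this is immediate since $\dot X$ has the same (finite) 1-skeleton as $X$ and one 2-cell $\dot C_i$ for each 2-cell $C_i$ of $X$, with attaching map $w_i^{n_i}$ the $n_i$-th power of the attaching map of $C_i$; finitely many 2-cells with finite-length attaching maps. One should also note that $\pi_1\dot X$ is finitely presented, so that "linear isoperimetric function" genuinely yields word-hyperbolicity in the standard sense.

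I do not anticipate a serious obstacle here, since all the substantive work has been done in the preceding lemmas; the proof is a short chain of citations. If anything, the one place to be slightly careful is making sure the hypotheses line up: Lemma~\ref{lem:bicollapse dehn} requires $X$ bicollapsible (given), Corollary~\ref{hyperbolic} requires the strong generalized Dehn property (supplied by the lemma), and the isoperimetric consequence requires compactness of $\dot X$ (inherited from compactness of $X$). A clean writeup simply threads these together.

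\begin{proof}
Since $X$ is bicollapsible, Lemma~\ref{lem:bicollapse dehn} shows that $\dot X$ has the strong generalized Dehn property. By Corollary~\ref{hyperbolic}, it follows that $\dot X$ has the Dehn property. Finally, $\dot X$ is compact: it has the same finite 1-skeleton as $X$, and it has one 2-cell $\dot C_i$ for each of the finitely many 2-cells $C_i$ of $X$, attached by the finite-length word $w_i^{n_i}$. Hence, by the classical fact recorded above (see e.g.\ \cite{ABC91}), $\widetilde{\dot X}$ has a linear isoperimetric function, and therefore $\pi_1\dot X$ is word-hyperbolic.
\end{proof}
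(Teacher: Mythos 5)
Your proof is correct and follows exactly the paper's route: the paper also deduces the theorem by combining Lemma~\ref{lem:bicollapse dehn} with Corollary~\ref{hyperbolic}, with hyperbolicity then following from the classical compact-plus-Dehn-property fact. Your extra remarks on the compactness of $\dot X$ are fine but merely make explicit what the paper leaves implicit.
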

\begin{proof}
This holds by combining Lemma~\ref{lem:bicollapse dehn}
and Corollary~\ref{hyperbolic}.
\end{proof}

\begin{rem} [\'a la Newman]
Let $\dot C$ be a 2-cell of $D$ that maps to a 2-cell $C$ with $\boundary_\p C = Se$
and $C$ collapses along the free face $e$.
Suppose $\boundary_p \dot C$ has the form $(Se)^n$.
Then all $n$ copies of $e$ in $\boundary_\p \dot C$ lie on $\boundary D$.
Hence, if $\dot C$ is a shell, we see that its innerpath is a subpath of $S$.
And similarly, if $\dot C$ is a cutcell, we see that the lobes associated to $\dot C$
meet $\dot C$ along copies of $S$.
\end{rem}

We define $\elev{X}= \underline{\left(\widetilde{\dot X}\right)}.$
In fact, the map $\elev{X}\rightarrow X$ is a simply-connected branched covering space whose branching degrees are ensured to be the $\{n_i\}$:

\begin{cor}
\label{cor:orders of relators}
With the setup of Theorem~\ref{thm:bicollapsible branched is dehn}, the order of $w_i$ in $\pi_1 \dot X$ is $n_i$.
\end{cor}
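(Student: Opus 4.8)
The plan is to show that the order of $w_i$ in $\pi_1\dot X$ is exactly $n_i$ by bounding it below and above. The upper bound is free: since $\boundary_\p \dot C_i = w_i^{n_i}$, the loop $w_i^{n_i}$ bounds the 2-cell $\dot C_i$ in $\dot X$, so $w_i^{n_i}=1$ in $\pi_1\dot X$ and hence $\order(w_i)$ divides $n_i$. The content is the lower bound: we must show $w_i^k\neq 1$ in $\pi_1\dot X$ whenever $1\le k< n_i$.

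Suppose for contradiction that $w_i^k=1$ in $\pi_1\dot X$ for some $1\le k<n_i$. Then by Van Kampen's lemma there is a reduced disk diagram $D\rightarrow \dot X$ with $\boundary_\p D = w_i^k$, and we take $D$ of minimal area among all such; $D$ has at least one 2-cell since $w_i$ is a nontrivial cyclically reduced word in $X^1$ (recall all attaching maps are immersed and $w_i$ is primitive in $X$). Now apply Theorem~\ref{thm:bicollapsible branched is dehn}: $\dot X$ has the Dehn property, so $D$ (being a minimal, hence reduced, diagram with a 2-cell, and not a single 2-cell since $|\boundary_\p D| = k|w_i| < n_i|w_i| = |\boundary_\p \dot C_i|$) must have a spur or a shell. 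Since $\boundary_\p D = w_i^k$ is a cyclically reduced path (as $w_i$ is cyclically reduced in $X^1$), $D$ has no spur. Hence $D$ has a shell $\dot C$ with $\boundary_\p \dot C = QS$, where $|Q|>|S|$ and $Q$ is a subpath of $\boundary_\p D = w_i^k$. The 2-cell $\dot C$ maps to some $\dot C_j$ with $\boundary_\p \dot C_j = w_j^{n_j}$, so $QS$ is (a cyclic conjugate of) $w_j^{n_j}$, giving $|w_j^{n_j}| = |QS| < 2|Q| \le 2|w_i^k|$; combined with the fact that $Q$, being a subpath of $w_i^k$, is a subword of the bi-infinite periodic word $w_i^\infty$, we can analyze the overlap.

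The heart of the argument is then a combinatorial-periodicity (Fine–Wilf type) clash: $Q$ is a long subword — longer than half — of the periodic word $w_j^{n_j}$, and simultaneously a subword of the periodic word $w_i^\infty$. By a standard argument on words with two long periods (since $|Q| \ge |w_j^{n_j}|/2 \ge |w_j|$, and $|Q| \ge |w_i|$ once $|w_i^k| \ge |w_i|$, which holds, and $|Q|$ exceeds $|w_i|+|w_j|$ in the relevant range after reducing to the worst case), the words $w_i$ and $w_j$ are powers of a common word; since both are taken to be primitive (no $w_i$ is a proper power in $X$ — this is forced since $X$ is bicollapsible hence $2$-collapsing, cf.\ the remark that $2$-collapsing rules out proper-power attaching maps, and also that distinct $2$-cells of $\widetilde X$ have distinct boundary cycles), we get $w_i = w_j$ up to cyclic conjugation and inversion, and then $Q$ being a subword of $w_i^\infty$ of length $>\tfrac12 n_j|w_i|$ forces $|w_i^k| > \tfrac12 n_j |w_i|$, i.e.\ $k > n_j/2$, while the shell replacement reduces $|\boundary_\p D|$ and so contradicts minimality of $D$ once we push $\dot C$ off: replacing $Q$ by the shorter $S$ in $\boundary_\p D$ yields a smaller disk diagram $D'$ for a word $w_i^{k'}$ with $k'<k$, still $\ge 1$ (one checks $S$ is again a power of $w_i$ since $S = w_j^{n_j}Q^{-1}$ read cyclically and all pieces are powers of $w_i$), contradicting minimality unless the process terminates with $k=0$, i.e.\ the identity — contradicting $w_i^k=1$ being a nontrivial relation in the first place, or more directly contradicting that $D$ had minimal area with $k\ge 1$.

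The main obstacle I expect is making the periodicity/clash argument airtight, in particular handling the case $i\ne j$ cleanly and controlling what happens to the boundary word $w_i^k$ after the shell is removed — one needs that $S$ is still a (nonnegative, strictly smaller) power of $w_i$ so that the induction on $k$ (equivalently, on $\area(D)$) closes. This is exactly the classical Newman-style argument for one-relator groups with torsion, now run through the Dehn property provided by Theorem~\ref{thm:bicollapsible branched is dehn}, and the remark ``\'a la Newman'' preceding the corollary is precisely the tool that controls the shape of the shell $\dot C$ (its innerpath lies in a single period $S$, so the overlap bookkeeping is the same as in the one-relator case).
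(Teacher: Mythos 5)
There is a genuine gap here --- in fact two, located exactly at the points you flag as ``obstacles'' but do not resolve. First, the Fine--Wilf periodicity clash is unjustified and, more importantly, has no way to handle $i\ne j$: the shell condition only gives $|Q|>\tfrac12 n_j|w_j|\ge |w_j|$, a bound in terms of $|w_j|$ alone, so when $|w_j|$ is small compared to $|w_i|$ the common factor $Q$ need not have length anywhere near $|w_i|+|w_j|$, and no conclusion that $w_i$ and $w_j$ are powers of a common word follows; since the corollary concerns presentations with several relators, the shell may well lie over a different relator and your argument has no mechanism for that case. Second, the descent does not close: after excising the shell you replace $Q$ by $S$, but $S$ is a subpath of $w_j^{n_j}$ (and even when $j=i$ it is a subword of $w_i^\infty$ beginning at an arbitrary letter), so the new boundary word is in general not of the form $w_i^{k'}$; the parenthetical claim that ``$S$ is again a power of $w_i$'' is simply not true as stated, so the induction on $k$ has no base to stand on.

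The paper's proof avoids both problems by extracting from the shell only the soft consequence that, since $n_j\ge 2$, a cyclic permutation of $w_i^m$ contains $w_j$ as a subword, and then deriving the contradiction not from diagrams in $\dot X$ but from bicollapsibility of the unbranched complex $X$: glue a copy of the 2-cell $C_j$ to a copy of $C_i$ along the occurrence of $w_j$ in the periodic word read around $\boundary_\p C_i$, and fold to obtain an immersion $Y\rightarrow X$ with $Y$ compact and $\pi_1Y=1$. Every edge of $\boundary C_j$ then also lies on $\boundary C_i$, so $C_j$ has no free face; hence $Y$ admits at most one collapse although it is not trivial, contradicting Definition~\ref{def:bicollapsible}. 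So the repair is not to sharpen the word-combinatorics but to replace the Newman-style bookkeeping by this two-cell immersed complex. Your opening steps (order divides $n_i$; no spurs since $w_i^k$ is cyclically reduced; existence of a shell via Theorem~\ref{thm:bicollapsible branched is dehn}) are fine and agree with the paper's starting point, though you should also treat the possibility that $D$ is a single 2-cell mapping to $\dot C_j$ with $j\ne i$, which the paper's subword argument covers uniformly.
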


\begin{proof}
Towards contradiction, suppose that $w_i^m=1$, for some $0<m<n_i$. By Theorem~\ref{thm:bicollapsible branched is dehn}, a reduced disk diagram $D$ with $\partial_\p D = w_i^m$ has a shell $w_j^{n_j}$. 
As $n_j\ge 2$, we find that a cyclic permutation of $w_i^m$ contains $w_j$ as a subword.
We may thus form a complex by gluing together copies of the corresponding 2-cells $C_i$ and $C_j$, and then fold to form an immersed complex $Y\rightarrow X$. Note that no edge on $\partial C_j$ is a free face. This contradicts bicollapsibility.
\end{proof}

\begin{rem}
The assumption that $n_i\ge 2$ in Lemma~\ref{lem:bicollapse dehn} is necessary.
For instance, the 2-complex $X$ associated to $\langle a,b\mid [a,b]\rangle$ is bicollapsible but since $\integers^2$ is not word-hyperbolic, $X$ certainly fails to have the
(strong generalized) Dehn property. 
\end{rem}

Bicollapsibility of $X$ provides control over contractible subcomplexes of $\X$.

\begin{cor}\label{cor:shells or spurs in a complex}
Let $X$ be bicollapsible. 
Then any near-immersion $E\rightarrow \X $ with $E$ compact and $\pi_1E=1$ has the property that either $E$ consists of a single 2-cell whose boundary is an embedded cycle,
or $E$ has at least two shells and/or spurs.
\end{cor}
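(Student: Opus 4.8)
The plan is to mimic the structure of the proof of Lemma~\ref{lem:bicollapse dehn}, but work with a near-immersion $E \to \X$ rather than a reduced disk diagram, and pass down to $X$ via the branched covering $\X \to X$. First I would compose with $\X \to X$ to obtain a cellular map $E \to X$ (after subdivision, if necessary, so that Lemma~\ref{lem:towers exist} applies), and let $E \to T$ be a maximal tower lift, with $T \to X$ the accompanying tower map. Since $E$ is compact and $\pi_1 E = 1$, the complex $T$ is compact and simply-connected, so bicollapsibility of $X$ gives that either $T$ is trivial (a $0$-cell, a $1$-cell, or a single $2$-cell collapsing along a free face), or $T$ has two distinct cells that collapse along free faces.

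Next I would translate each alternative back to $E$. If $T$ is a $0$- or $1$-cell, then so is $E$, and since $E$ is a subcomplex of $\X$ this case is degenerate — I should note that $E$ then has two spurs when it is a $1$-cell, matching conclusion (2), or is excluded if we only allow genuine subcomplexes (I would phrase the statement to cover the $0$/$1$-cell case cleanly). If $T$ is a single $2$-cell, then every $2$-cell of $E$ maps into the image of that single $2$-cell of $X$; because the attaching maps in $\X$ are powers $w_i^{n_i}$ with $n_i \geq 2$ and $E \to \X$ is a near-immersion, each $2$-cell of $E$ has boundary cycle an embedded circle traversing $w_i^{n_i}$ at least twice around the base — so either $E$ is a single such $2$-cell (first conclusion), or $E$ has additional structure giving spurs. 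The substantive case is when $T$ has two distinct collapsing cells $C_1, C_2$: as in Lemma~\ref{lem:bicollapse dehn}, a collapse of a $1$-cell in $T$ pulls back to a spur in $E$ (a valence-$1$ vertex), while a collapse of a $2$-cell of $T$ along a free face pulls back, via the branched covering structure, to a shell in $E$ — here one uses the ``\'a la Newman'' remark: if $C$ collapses along free face $e$ with $\boundary_\p C = Se$, then in $\dot C$ all $n$ copies of $e$ lie on $\boundary E$, so $\dot C$ is a shell with innerpath inside $S$ (it is not a cutcell since $E$ is simply-connected, hence has no cutcells — any cutcell of a simply-connected diagram would still be fine, but in a subcomplex $E$ of $\X$ that is simply-connected there are no cutcells in the strong sense, so the collapse manifests as a shell). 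Thus $E$ acquires at least two shells and/or spurs.

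The main obstacle I anticipate is bookkeeping around the difference between $\X = \underline{(\widetilde{\dot X})}$ and $\widetilde{\dot X}$ itself — i.e., making sure the branched-covering description of $\X \to X$ is used correctly so that a $2$-cell of $T$ in $X$ genuinely lifts to a $2$-cell of $E$ whose boundary wraps $n_i$ times, and that a collapse along a free face upstairs in $X$ really does produce a shell (and not merely a cutcell or nothing) in the branched picture. This is exactly the point handled by the remark following Theorem~\ref{thm:bicollapsible branched is dehn}, so I would invoke that remark directly. A secondary point is verifying that near-immersions (not immersions) still admit maximal tower lifts with the needed properties; since near-immersions are immersions away from $E^0$, the failures of local injectivity at vertices do not obstruct the tower argument, and the conclusion about collapses of $T$ pulling back is unaffected — a spur of $E$ is precisely a vertex where the boundary path backtracks, which is compatible with $E \to \X$ being only a near-immersion. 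I would close by remarking that the second alternative of the conclusion — $E$ has at least two shells and/or spurs — is exactly what is needed for the generalized Dehn-type arguments downstream, and that the first alternative (single embedded $2$-cell) is the expected ``irreducible'' base case.
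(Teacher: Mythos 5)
There is a genuine gap, and it is exactly at the point you try to wave away: cutcells. Your argument asserts that since $E$ is simply-connected it ``has no cutcells,'' so the pulled-back collapse of a 2-cell of $T$ must manifest as a shell. That is false: a cutcell is just a 2-cell whose removal disconnects the complex, and simply-connected complexes have these in abundance (every interior 2-cell of a ladder is a cutcell). Concretely, if a 2-cell $C$ of $T$ collapses along a free face $e$ with $\boundary_\p C=Se$, then in a 2-cell $\dot C$ of $E$ lying over $C$ the $n$ copies of $e$ do lie on $\boundary E$, but other 2-cells of $E$ may be attached along several of the intervening copies of $S$, in which case $\dot C$ is a cutcell and not a shell. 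This is precisely why the parenthetical in Lemma~\ref{lem:bicollapse dehn} says a collapse upstairs induces ``shells and/or cutcells,'' and why the paper's proof of this corollary only concludes at first that $E$ has at least two cutcells and/or shells and/or spurs, and then invokes Lemma~\ref{lem:complex dehn property} (the outermost-lobe argument: each outermost lobe of a cutcell contains a shell or spur) to upgrade this to two shells and/or spurs. Your proposal never supplies this step or any substitute for it, so as written it does not establish the conclusion in the stated form.

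Two smaller points. First, in the case where the tower image is a single 2-cell, the embeddedness of the boundary of the lone 2-cell of $E$ does not follow from ``$n_i\geq 2$ and near-immersion''; the paper gets it from Corollary~\ref{cor:2-cells embed in Xtilde} (which is where bicollapsibility and immersed attaching maps are really used), and the dichotomy there is that the preimage is either two collapsing cells, or one embedded 2-cell together with a graph which, by $\pi_1E=1$, is empty or contributes a spur --- your phrase ``additional structure giving spurs'' is standing in for that argument. Second, your choice to tower-lift $E\rightarrow X$ rather than $E\rightarrow \X\rightarrow\widetilde X$ is a harmless variation; the structure of your argument otherwise follows the paper's, so once the cutcell case is handled via Lemma~\ref{lem:complex dehn property} the proof goes through.
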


\begin{proof}
By Lemma~\ref{lem:towers exist},
let $E\rightarrow Y$
be a maximal tower lift of $E\rightarrow \X \rightarrow \widetilde X$.
Note that $Y$ is compact, $\pi_1Y=1$, and $Y\rightarrow \widetilde X$ is an immersion.
By bicollapsibility, $Y$ is either trivial, 
or $Y$ has two (or more) 2-cells that collapse along free faces.
Observe that preimages of collapses in $Y$ are collapses in $E$.
If $Y$ is a 0-cell or 1-cell, then so is $E$.
If $Y$ is a single 2-cell that collapses along a free face, then the preimage of this 2-cell in $E$ either has two cells that both collapse, or $E$ has a single 2-cell, whose boundary is embedded by Corollary~\ref{cor:2-cells embed in Xtilde}, that collapses, together with a graph. 
Since $\pi_1E=1$, the graph is empty or provides a spur.
Finally, if $Y$ has two collapses, then their preimages provide two collapses, and we conclude that $E$ has at least two cutcells and/or shells and/or spurs.
The result therefore follows from Lemma~\ref{lem:complex dehn property}.
\end{proof}

\begin{rem}\label{rem:tiny innerpath}
Within the context of Corollary~\ref{cor:shells or spurs in a complex},
each 2-cell $R$ is associated with a degree~$n\geq 2$,
and 
it is then natural to redefine the notion of \emph{shell}
so that $|S|<\frac{1}{n}|\boundary_\p R|$.
The conclusion of Corollary~\ref{cor:shells or spurs in a complex} holds with this revised definition.
This is because lobes are attached to $R$ along part of a  component of $\boundary_\p R -\integers_ne$ for some edge $e$.
\end{rem}

We now describe a natural collection of subspaces of $\X$.

\begin{defn}
Let $r$ be a 2-cell of $\X$, let $e$ be an edge of $\partial_\p r=w^n$, and let $e'$ be the edge in the barycentric subdivision of $\X$ from the center of $r$ to the center of $e$.
A \emph{spider} in $r$ at $e$ is the tree consisting of the $\mathbb{Z}_n$-orbit of $e'$. We call the $\mathbb{Z}_n$-orbit of the center of $e$ the \emph{feet} of the spider.

Consider the graph $\Gamma$ obtained from the disjoint union of all spiders of $\X$ by identifying spiders along their feet. 
Observe that there is a natural immersion $\Gamma \rightarrow \X$.

\end{defn}

\begin{figure}
  \centering
  \includegraphics[width=.2\textwidth]{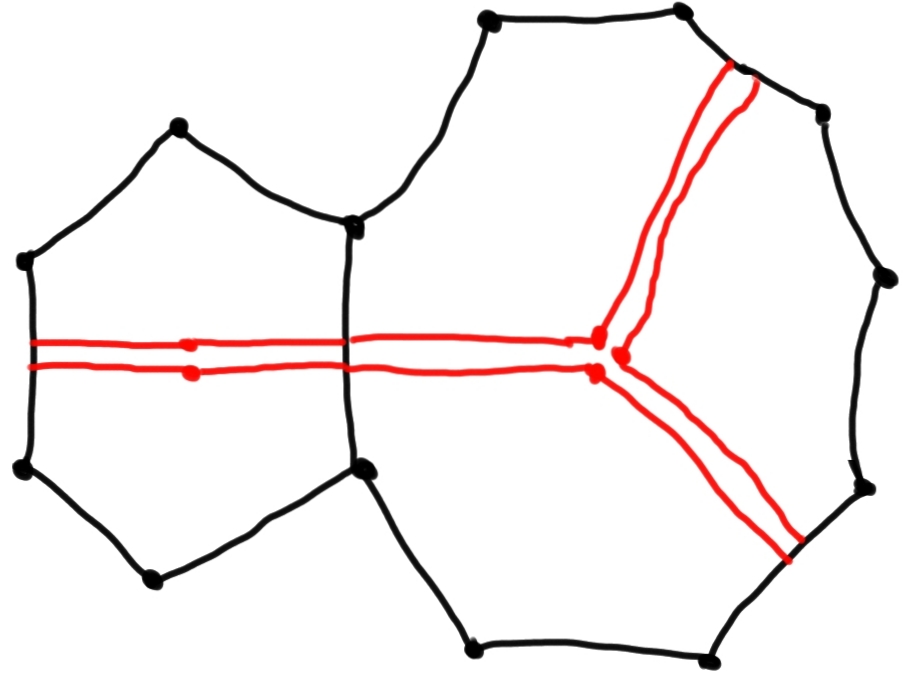}
  \caption{\label{fig:TwoPerEdge} Parts of three natural walls.}
\end{figure}

The images of components of $\Gamma$ in $\X$ are called \emph{divisive trees}.
The \emph{natural walls} of $\X$ are the components of regular neighborhoods of divisive trees.
Note that the natural walls immerse inside the divisive tree. 
See Figure \ref{fig:TwoPerEdge}.

In fact, divisive trees are called trees for a reason:

\begin{lem}
\label{bicollapsible implies tree}
Suppose that $X$ is bicollapsible.
\begin{enumerate}
\item Each component of $\Gamma$ is a tree.
\item Each component of $\Gamma$ embeds in $\X$.
\end{enumerate}
\end{lem}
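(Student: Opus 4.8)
# Proof Plan for Lemma~\ref{bicollapsible implies tree}

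The plan is to prove both statements simultaneously by analyzing the structure of $\Gamma$ and its immersion $\Gamma \to \X$ through the lens of bicollapsibility. First, I would observe that each individual spider is by construction a tree (it is the $\mathbb{Z}_n$-orbit of a single edge $e'$ in the barycentric subdivision, all sharing the central vertex at the barycenter of $r$, so it is a star with $n$ legs), and the feet-identifications that build $\Gamma$ only glue leaves to leaves. Thus $\Gamma$ fails to be a forest precisely when some sequence of spiders, glued cyclically along feet, produces an embedded circuit. So the content of claim~(1) is the absence of such circuits, and claim~(2) additionally rules out that the immersion $\Gamma \to \X$ identifies points of a tree component.

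The core idea: suppose a component $\Gamma_0$ of $\Gamma$ is not a tree, or does not embed. Then I would extract from $\Gamma_0 \to \X$ a minimal bad configuration — a minimal cyclic sequence of 2-cells $r_1, \ldots, r_k$ of $\X$ (with $k \geq 1$; the case $k=1$ being a single spider whose feet are identified in $\X$, and $k \geq 2$ a genuine circuit of spiders) such that consecutive cells share a foot. From this data I would build a compact subcomplex, roughly the union of the closed 2-cells $r_1, \ldots, r_k$, and then pass to a maximal tower lift via Lemma~\ref{lem:towers exist} to obtain an immersion $Z \to \widetilde{X}$ (composing $\X \to \widetilde{X} \to X$ as in the proof of Corollary~\ref{cor:shells or spurs in a complex}, and remembering each 2-cell of $\X$ covers one of $X$ with degree $n_i$, so the relevant 2-cells of $Z$ are the un-branched cells $C_i$). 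The key structural point is that the spider circuit constrains \emph{where} collapses can occur: a spider foot at edge $e$ of $\partial_\p r = w^n$ sits at the midpoint of one of the $n$ copies of the subword, and the divisive-tree/wall combinatorics force the shared edges to be non-free in $Z$. If the bicollapsibility dichotomy then says $Z$ has two collapses along free faces, these collapses must lie outside the circuit edges, and — by the usual argument that a circuit has no "outermost" free face to collapse — I would derive a contradiction with minimality of the configuration, much as cutcells force multiple outermost lobes in Lemma~\ref{lem:complex dehn property}. The alternative, that $Z$ is trivial, is incompatible with containing a genuine 2-cell circuit.

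For claim~(2), once~(1) is known, a component $\Gamma_0$ is a tree; if the immersion $\Gamma_0 \to \X$ is not injective, two distinct feet (or a foot and an interior point) map to the same point of $\X$, which again produces a cyclic spider configuration — closing up the tree path between the two identified points — and I would feed this into the same tower-lift-plus-bicollapsibility machine. So~(1) and~(2) are really the same contradiction applied to slightly different "bad circuits."

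The main obstacle I anticipate is the bookkeeping in the tower lift: I need to verify carefully that the compact subcomplex built from a spider circuit has \emph{no} free face along the shared (foot) edges — this is where the precise geometry of how divisive trees cut through 2-cells matters, and where Remark~\ref{rem:tiny innerpath} and the $\mathbb{Z}_n$-orbit structure of $\partial_\p r$ are essential — and that the tower lift does not destroy this property, so that bicollapsibility's two guaranteed collapses genuinely land where they cannot. Getting the minimal-configuration setup right (so that "no smaller bad circuit" really forbids the extra collapses) is the delicate step; the rest is a routine application of the tower-lift technology already used repeatedly above.
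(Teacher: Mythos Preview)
Your overall strategy---build a complex from the cycle of 2-cells carrying the bad path in $\Gamma$, tower-lift, and invoke bicollapsibility to locate collapses that the circuit geometry forbids---is the paper's strategy as well. But there is a real gap: the complex you build is not simply-connected. A cyclic sequence of closed 2-cells glued along foot-edges is an annular diagram $A$ with $\pi_1 A \cong \mathbb{Z}$, and the maximal tower lift $Z$ of $A \to X$ has $\pi_1 Z$ a (possibly nontrivial) cyclic quotient of $\pi_1 A$. Bicollapsibility in Definition~\ref{def:bicollapsible}, and its consequence Corollary~\ref{cor:shells or spurs in a complex}, both require $\pi_1 = 1$, so neither applies to $Z$. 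The obstacle you anticipate---checking that foot-edges remain non-free after tower lifting---is downstream of this; you first need a simply-connected source.

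The paper repairs this by capping the annulus. Since $\X$ is simply-connected, a core curve $P$ of $A$ bounds a reduced disk diagram $D \to \X$, and one works instead with $A \cup_P D$, which has trivial $\pi_1$. Corollary~\ref{cor:shells or spurs in a complex} then applies to its tower lift $E$, and the contradiction comes from tracking where the two shells/spurs of $E$ can pull back: $A\cup_P D$ has no spurs; a shell pulling back into $D$ or across the $A$--$D$ interface violates reducedness or minimality of $D$; and $A$ itself contributes at most \emph{one} shell, at the 2-cell where the endpoints of $\tau$ sit (not zero, as your ``a circuit has no outermost free face'' suggests). The single-2-cell alternative for $E$ forces consecutive cells of $A$ to form a cancellable pair. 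Once you insert the disk-filling step and adjust the shell count to ``at most one,'' your plan is essentially the paper's argument.
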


\begin{rem}
In the barycentric subdivision of $X$, consider a 1-cell $s$, and let $S(s)$ be the union of closed edges with an endpoint at the new vertex corresponding to $s$.
When the attaching map of each 2-cell of $X$ is an embedding, each $S(s)$ is a tree, and the divisive trees in $\X$ are components of the preimage of $S(s)$ under $\X\rightarrow X$.
\end{rem}

\begin{proof}
Consider a shortest embedded path $\tau\subset \Gamma$ mapping to a nontrivial closed path in $\X$.
Choose an annular diagram $A\rightarrow \X$ carrying $\tau \rightarrow \X$. 
Note that $A\rightarrow \X$ is a near-immersion, since $\tau\subset \Gamma$ is embedded.
Choose a reduced disk diagram $D\rightarrow \X$ such that $\boundary_\p D$ is a closed path 
$P\rightarrow A$ that generates $\pi_1A$.
Observe that $A$ has at most one shell (at the 2-cell corresponding to the initial and terminal point of $\tau$; see Remark~\ref{rem:tiny innerpath}).

Consider $A\cup_P D \to \X$, and let $E$ be a maximal tower lift. Since $\pi_1E=1$, by Corollary~\ref{cor:shells or spurs in a complex}, $E$ is either a single 2-cell with embedded boundary cycle, or $E$ has two spurs or shells.

Preimages of spurs of $E$ are spurs of $A\cup_P D$, but $A\cup_P D$ evidently has no spurs.

Suppose $E$ has two shells. Preimages of shells of $E$ that lie in $A$ are shells of $A$. 
The 1-cells of the outer path of a preimage shell either lie in $A$, (but $A$ has at most one shell), or they are interior to $D$, in which case $D$ is not reduced.

Suppose $E$ consists of a single 2-cell with embedded boundary cycle. 
The path $\tau$ is immersed, so there are consecutive 2-cells of $A$ mapping to the single embedded 2-cell in $E$, and therefore cannot be reduced. \qedhere

\end{proof}

\begin{rem}
The subgroup $\stab(\T)$ is not always a codimension-1 subgroup. It is  called 
\emph{divisive} in \cite{HruskaWiseAxioms} and the natural walls carry its associated codimension-1 subgroups.
\end{rem}

We close this section by explaining the following:
\begin{thm}
Let $X$ be bicollapsible and compact.
If $\pi_1X$ is infinite then $\pi_1\dot X$ has a codimension-1 subgroup.
\end{thm}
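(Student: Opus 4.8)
The plan is to produce a codimension-1 subgroup of $G = \pi_1 \dot X$ by exhibiting a divisive tree $\T$ in $\X$ whose stabilizer acts on its associated natural wall so that the quotient of the Cayley graph of $G$ by this wall stabilizer has more than one end. By Theorem~\ref{thm:bicollapsible branched is dehn}, $G$ is word-hyperbolic, so ends of the coset space can be detected via the geometry of $\X$ and its walls. First I would establish that $\X$ is itself one-ended (or has infinitely many ends), using that $\pi_1 X$ is infinite: since $X$ is compact and aspherical (bicollapsible implies DR, hence aspherical), $\pi_1 X$ is an infinite group, and the branched covering $\X \to X$ together with the action of $G$ on $\X$ should transfer this infiniteness to $\X$; the key point is that $\X$ is quasi-isometric to $G$ via the Milnor--\v{S}varc lemma (the action of $G$ on $\X$ is proper and cocompact because $\dot X$ is compact).

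Next I would analyze a fixed divisive tree $\T \subset \X$ and its stabilizer $H = \stab(\T)$, and the natural wall $W$ which is a component of a regular neighborhood of $\T$. By Lemma~\ref{bicollapsible implies tree}, $\T$ is an embedded tree, so the natural wall $W$ genuinely separates $\X$, or at least the relevant neighborhood does. The crucial claim is that $\X \smallsetminus W$ has at least two ``deep'' components — that the wall is two-sided with unbounded complementary pieces on each side. I would argue this by using the spider structure: each spider in a 2-cell $r$ of $\X$ has $n \geq 2$ feet and separates the boundary cycle $w^n$ of $r$ into $n$ arcs, so locally the wall cuts each 2-cell it enters into at least two pieces that attach to genuinely different parts of $\X$. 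Combined with one-endedness of $\X$, a Mayer--Vietoris / counting-ends argument along the wall shows that $W$ separates $\X$ into components, at least two of which are unbounded; since $H$ acts cocompactly on $W$ (as $\dot X$ is compact, the image of $\T$ in $\dot X$ is compact, so $H$ acts cocompactly on $\T$ hence on $W$), this yields that the coset graph $H \backslash \mathrm{Cay}(G)$ has more than one end, i.e., $H$ is codimension-1.

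The main obstacle I expect is the step showing that the natural wall $W$ has \emph{at least two} unbounded complementary components in $\X$, rather than possibly being ``peripheral'' in the sense that $\X \smallsetminus W$ deformation retracts essentially onto one side. This is exactly the content flagged in the Remark just before the theorem: $\stab(\T)$ is not always codimension-1 on the nose, but the natural walls carry the codimension-1 subgroups — so the subtle point is that one must pass to the wall stabilizer $\stab(W)$ (a possibly smaller or differently-structured subgroup than $\stab(\T)$) and verify the two-sidedness there. I would handle this by invoking the structural control over contractible subcomplexes of $\X$ provided by Corollary~\ref{cor:shells or spurs in a complex} and Remark~\ref{rem:tiny innerpath}: any disk diagram in $\X$ with boundary crossing a wall must have a shell or spur with tiny innerpath (length $< \frac{1}{n}|\boundary_\p R|$), which forces the two-sided structure — a path crossing from one side of $W$ to the other and back must bound a diagram, and the shell/spur structure shows the diagram ``wraps around'' the wall, so the two sides cannot be collapsed into one. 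The remaining details — converting ``two unbounded sides of $W$ in $\X$'' into ``more than one end of $\stab(W)\backslash\mathrm{Cay}(G)$'' — are standard given that $G$ acts properly and cocompactly on $\X$ and $\stab(W)$ acts cocompactly on $W$, and I would cite the analogous arguments in \cite{LauerWise07} and the wallspace framework of \cite{HruskaWiseAxioms}.
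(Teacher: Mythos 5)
There is a genuine gap, and it sits exactly where you flagged the ``main obstacle.'' What makes $\stab(W)$ codimension-1 is not merely that the natural wall $W$ separates $\X$, but that \emph{both} halfspaces are deep: each side must contain points arbitrarily far from $W$ (otherwise $H\backslash \X$ has only one end even though $W$ separates). Your proposed route to this --- establish that $\X$ is one-ended (or infinite-ended) and then run a Mayer--Vietoris/ends-counting argument along $W$ --- does not deliver it: $\pi_1\dot X$ need not be one-ended, and even if it were, a non-compact separating subcomplex can perfectly well have a shallow side, all of whose points lie within bounded distance of $W$; that is precisely the ``peripheral'' scenario you mention. Your fallback via Corollary~\ref{cor:shells or spurs in a complex} and the tiny-innerpath remark controls disk diagrams and (as in Lemma~\ref{bicollapsible implies tree}) shows the divisive trees are embedded, but it gives no lower bound on how far the complement of $W$ reaches, so it cannot rule out shallowness. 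Note also that in your sketch the hypothesis ``$\pi_1X$ infinite'' is only used to say $\X$ is unbounded, whereas the theorem's hypothesis is about the unbranched base $X$, and it must enter more substantively.

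The paper's proof supplies exactly the missing mechanism, and it is quite short: since $\pi_1X$ is infinite and $X$ compact, $\widetilde X^1$ contains geodesics of every length, hence (K\"onig's lemma) a bi-infinite geodesic $\gamma$. Fix an edge $e$ of $\gamma$, lift $\gamma$ to a geodesic $\dot\gamma$ of $\X$ through a lift $\dot e$, and let $W$ be the natural wall alongside the divisive tree $T_e$ through $\dot e$. The key point is that the projection $\X\rightarrow \widetilde X$ is $1$-Lipschitz on $1$-skeleta and sends $W$ into the \emph{bounded} star $S(e)$. This gives both needed facts at once: $W\cap\dot\gamma$ is a single point, since any further intersection would project to a further intersection of the geodesic $\gamma$ with $S(e)$; and both rays of $\dot\gamma$ travel arbitrarily deep into the two halfspaces of $W$, since the rays of $\gamma$ get arbitrarily far from $S(e)$ and distances can only grow upstairs. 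The two ends of $\dot\gamma$ then give two ends of $H\backslash\X$, hence of the coset graph, where $H$ is the subgroup of $\stab(T_e)$ stabilizing $W$ (you correctly anticipated passing from $\stab(\T)$ to the wall stabilizer). Without some substitute for this bounded-projection trick --- i.e., a concrete certificate that a quasi-line crosses $W$ once and escapes it on both sides --- your ends-counting step does not go through.
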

\begin{proof}
We first observe that the 1-skeleton of $\widetilde X$ has a bi-infinite geodesic.
To see this, note that since $\pi_1X$ is infinite, $\widetilde X$ contains pairs of vertices $p_i,q_i$ with
$\dist(p_i,q_i)= i$.
Hence there are geodesics $\gamma_i$ of length $i$ for each $i$.
Hence there is a bi-infinite geodesic $\gamma$ by Koenig's infinity lemma.

Let $e$ be an edge of $\gamma$.
Consider the divisive tree $T_e$ of $\X$ associated to a lift $\dot e$ of $e$.
And let $\dot \gamma$ be a lift of $\gamma$ containing $\dot e$.
Note that $\dot \gamma$ must also be a geodesic in $\X$.
Finally, let $W$ be a  wall of $\X$ that is alongside $T_e$ and passes through  $\dot e$.
Then $W\cap \dot \gamma$ is a singleton, since any intersection of $W$ and $\dot \gamma$ projects to an intersection of $S(e)$ and $\gamma$.
Moreover, $\dot \gamma$ travels deeply within the two halfspaces of $W$. This last statement holds since $\dot \gamma$ projects to 
$\gamma$ in $\widetilde X$, but the two rays of $\gamma$ get arbitrarily far from $e$.
Since $\gamma$ gets far from the projection of $W$ in $\widetilde X$ (contained in $S(e)$), $\dot \gamma$ gets far from $W$ in $\X$.

Finally, this demonstrates that the subgroup of the stabilizer of $T_e$ that stabilizes $W$ is a codimension-1 subgroup: Indeed, the two ends of $\dot\gamma$ project to distinct ends of $H\backslash \X$, hence the corresponding rays yield distinct ends of the coset diagram $H\backslash \pi_1\dot X$.
\end{proof}

\section{Branched covers and CAT(0) 2-complexes}

While the examples indicated in \S\ref{examples sec} are extensive, geometric properties of the complex $\X$ are often more naturally explored without bicollapsibility.
Two examples follow.

\begin{thm}\label{thm:typical branched}
Let $X$ be a 2-complex with the following properties:
\begin{enumerate}
\item Each 0-cell $v$ has $\girth(\link(v))\geq p$.
\item Each 2-cell $r$ has $|\boundary_\p r|\geq q$.
\item\label{pq ineq} We have $ \frac 2p + \frac 1q \le 1$.
\end{enumerate}
Then the complex $\X$ admits a CAT(0) metric.
If the inequality in \eqref{pq ineq} above is strict, $\dot X$ admits a metric of negative curvature.
Each natural immersed wall is embedded and 2-sided (hence an actual wall) and $\pi_1\dot X$ acts properly on the CAT(0) cube complex associated to the natural wall structure on $\X$.
Moreover, if $X$ is compact then each natural wall has f.g.\ and quasi-isometrically embedded stabilizer.
 (Hence if $\pi_1X$ is word-hyperbolic, then $\pi_1X$ acts cocompactly.) 
\end{thm}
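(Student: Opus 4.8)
The plan is to build everything from the standard link-condition criterion for nonpositive curvature of 2-complexes, due to Ballmann--Buyalo / Gromov, combined with the ring-type combinatorial control available in such complexes. First I would put the obvious piecewise-Euclidean metric on $\X=\underline{\widetilde{\dot X}}$: declare each 2-cell of $\dot X$ (hence each 2-cell of $\X$) to be a regular Euclidean $k$-gon for the appropriate perimeter $k$, and each 1-cell a unit segment. The link of a vertex $v$ of $\X$ then has edges of length determined by the interior angles of the polygons around $v$; the girth hypothesis $\girth(\link(v))\ge p$ together with the perimeter hypothesis $|\boundary_\p r|\ge q$ makes every cycle in $\link(v)$ have length at least $p\cdot\frac{(q-2)\pi}{q}$, and the inequality $\frac2p+\frac1q\le 1$ is exactly what is needed to force this to be $\ge 2\pi$. (Here one must note that $\boundary_\p r$ is immersed and, by Corollary~\ref{cor:2-cells embed in Xtilde} applied in the relevant cases, the attaching maps in $\X$ are embeddings, so that links really are simplicial graphs and the girth bound transfers.) Hence $\X$ satisfies the link condition and is CAT(0), being simply connected by construction; and if the inequality is strict one uses regular hyperbolic polygons of the appropriate defect instead, getting strictly negative curvature. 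The descent to $\dot X$ itself is by the same local computation since $\X\to\dot X$ is a local isometry on the level of the metrics just described.

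Next I would establish that each natural immersed wall is embedded and 2-sided. By Lemma~\ref{bicollapsible implies tree}--style reasoning adapted to this setting (or rather its analogue here, since bicollapsibility is not assumed we instead use the CAT(0) geometry directly): a divisive tree is a convex subtree of $\X$ because it is a component of the preimage of a local geodesic segment $S(s)$ under the local isometry $\X\to X$, hence is itself convex, hence embedded and contractible; its regular neighborhood deformation retracts to it and is therefore an embedded, 2-sided subspace separating $\X$. Two-sidedness comes from the fact that a small tubular neighborhood of a convex tree in a CAT(0) 2-complex has trivial normal bundle. This gives a genuine wallspace structure, and the properness of the $\pi_1\dot X$-action on the dual cube complex follows from the general Sageev-machine criterion: walls are locally finite, the collection of walls separating two points is finite (each wall meets a geodesic at most once, by convexity of halfspaces, exactly as in the proof of the codimension-1 theorem above), and every pair of points is separated by at least one wall.

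For the final parenthetical claims, suppose $X$ is compact. The stabilizer of a natural wall $W$ acts cocompactly on the convex subtree $T$ it neighbors, and hence is finitely generated and acts geometrically on $T$; since $T$ is convex in $\X$ its inclusion is an isometric embedding, giving $\stab(W)\hookrightarrow \pi_1\dot X$ quasi-isometrically embedded by the Milnor--\v{S}varc lemma applied to both actions. Finally, if $\pi_1X$ is word-hyperbolic then by Theorem~\ref{thm:bicollapsible branched is dehn} (or the negatively-curved metric in the strict case) $\pi_1\dot X$ is word-hyperbolic, the wall stabilizers are quasiconvex, and hence by the Hsu--Wise / Sageev cocompactness criterion the action on the dual cube complex is cocompact. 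The main obstacle I expect is the bookkeeping in the link-condition computation: one has to be careful that the girth hypothesis on $\link(v)$ in $X$ really does give a girth bound in $\X$ after passing to the branched cover and quotienting duplicates, i.e.\ that no short cycle is created in a vertex link by the branching, and that the polygons can be realized with the claimed angles simultaneously around each vertex; verifying convexity of divisive trees in the possibly non-manifold complex $\X$ is the second delicate point.
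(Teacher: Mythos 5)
There are two genuine gaps. The first is the link-condition arithmetic. With regular Euclidean polygons, your bound $p\cdot\frac{(q-2)\pi}{q}\ge 2\pi$ is equivalent to $\frac 2p+\frac 2q\le 1$, \emph{not} to the hypothesis $\frac 2p+\frac 1q\le 1$: for $p=3$, $q=4$ the hypothesis holds, but a regular square has interior angle $\pi/2$ and a girth-$3$ link then has cycles of length $3\pi/2<2\pi$. What makes the stated inequality the right one is the branching, which you never use: a 2-cell of $\dot X$ has boundary $w_i^{n_i}$ with $n_i\ge 2$, hence perimeter at least $2q$, so the relevant corner angles can be taken at least $\pi-\pi/q$, and $p(\pi-\pi/q)\ge 2\pi$ is exactly $\frac 2p+\frac1q\le 1$. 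The paper encodes this by subdividing each branched 2-cell into $n_i|w_i|$ isosceles triangles with apex angle $\pi/q$ at a new central vertex and base angles $\pi/p$ at the old vertices; condition~\eqref{pq ineq} is precisely the statement that such a triangle has angle sum at most $\pi$ (strictness giving hyperbolic triangles), and the link check at new vertices uses $n_i|w_i|\cdot\frac{\pi}{q}\ge 2q\cdot\frac\pi q=2\pi$.

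The second, more serious, gap is the treatment of the walls. Your justification that divisive trees are convex invokes ``the local isometry $\X\to X$'' and geodesity of $S(s)$ in $X$, but $X$ carries no metric in this theorem (it is not assumed nonpositively curved), and $\X\to X$ is a branched covering, not a local isometry. Worse, with your regular-polygon metric the center of a 2-cell of $\X$ is a smooth point (cone angle exactly $2\pi$), and a spider with $n\ge 3$ legs through a flat point cannot be locally convex, since adjacent legs subtend angle $2\pi/n<\pi$; so the natural walls are not convex in your metric and embeddedness/2-sidedness does not follow. This is exactly what the paper's triangle-subdivision metric is designed for: the cone angle at each new central vertex is $n_i|w_i|\pi/q\ge n_i\pi$, so adjacent spider legs subtend angle $|w_i|\pi/q\ge\pi$, wall segments cross 1-cells orthogonally, and hence each natural wall is a locally geodesic immersed graph, therefore embedded with convex image (and convex carrier), which is also what gives the f.g., quasi-isometrically embedded stabilizers and, via quasiconvexity, cocompactness of the dual-cube-complex action. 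Two further points: your properness criterion is misstated (finiteness of the set of walls separating two points, plus one separating wall per pair, does not yield properness; the paper instead cuts an axis of every infinite-order element, citing \cite{HruskaWiseAxioms}), and your appeal to Theorem~\ref{thm:bicollapsible branched is dehn} for hyperbolicity is unavailable because bicollapsibility is not assumed here; in the strict case hyperbolicity comes from the negatively curved metric itself.
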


\begin{rem}[Cubulation via Small Cancellation]
Note that in the above setting with $q\ge 3$ all polygons of $\widetilde X$ have at least 3 sides,
and all pieces have length at most 1.
Consequently, $\X$ satisfies the $B(6)$ condition \cite{WiseSmallCanCube04}.
It has a different wallspace structure, also preserved by the group action, than the one we are considering above, 
in which antipodal edges are paired (after subdividing).
Therefore $\pi_1\dot X$ acts properly on the dual cube complex,
and moreover, it acts properly and cocompactly when $\dot X$ is compact.
\end{rem}

Note that the scenario of Theorem~\ref{thm:typical branched} holds
when $X$ is a nonpositively curved piecewise Euclidean 2-complex whose 2-cells
have angles $<\pi$.

\begin{const}[Triangle Subdivision]
Following the setup of Theorem~\ref{thm:typical branched}:
For each 2-cell $r$ of $X$, and associated $2$-cell $\dot r$ of $\dot X$ with degree $d$,
we subdivide $\dot r$ into $d |\boundary_\p r|$  triangles meeting at a central new vertex.
We introduce a piecewise nonpositively-curved metric to $\dot X$ in which each triangle is isosceles with base side length $1$, as follows:
Let the central angle be given by $\pi/q$, and the base angles by $\pi/p$. Note that inequality \eqref{pq ineq} implies that there is a metric of constant non-positive curvature on the triangle with the given angles, which is moreover negative if \eqref{pq ineq} is strict.

The result on $\X$ is a CAT(0) complex built from metric triangles.
Moreover, each natural tree is convex in the metric,
and intersects central vertices and centers of edges.
Furthermore, the complementary regions are finite neighborhoods of original 0-cells.
\end{const}
\begin{proof}[Proof of Theorem~\ref{thm:typical branched}]
To see that the subdivision is CAT(0) with triangles as given, by \cite{Gromov87,BridsonHaefliger}
it suffices to verify
that the length of any cycle in $\link(v)$ is at least $2\pi$.
When $v$ is a new 0-cell, its link is a cycle of length $|\boundary_\p \dot r|\cdot \frac \pi q = d|\boundary_\p r| \frac \pi q \ge 2\pi$.
When $v$ is an old 0-cell, the girth in the subdivision is twice the original girth,
and hence at least $2\cdot p$ by hypothesis.
Because each angle at an old 0-cell is $\pi/p$, the length of a cycle in $\link(v)$ is at least $2p\cdot \frac \pi p =2\pi$.

The walls are convex since their segments meet edges orthogonally
(hence locally isometric to the inclusion
 $\{\frac12\}\times T \subset [0,1]\times T$ where $T$ is tree)
and meet the central vertices with an angle $ |\boundary_\p r| \cdot \frac \pi q \geq \pi$ on each side.
Hence they are graphs that immerse by a local isometry, and are hence convex. 

For properness, note that any infinite order element of $\pi_1 \dot X$ has an axis which is cut by a wall (see \cite{HruskaWiseAxioms}).
\end{proof}
\begin{figure}
  \centering
  \includegraphics[width=.35\textwidth]{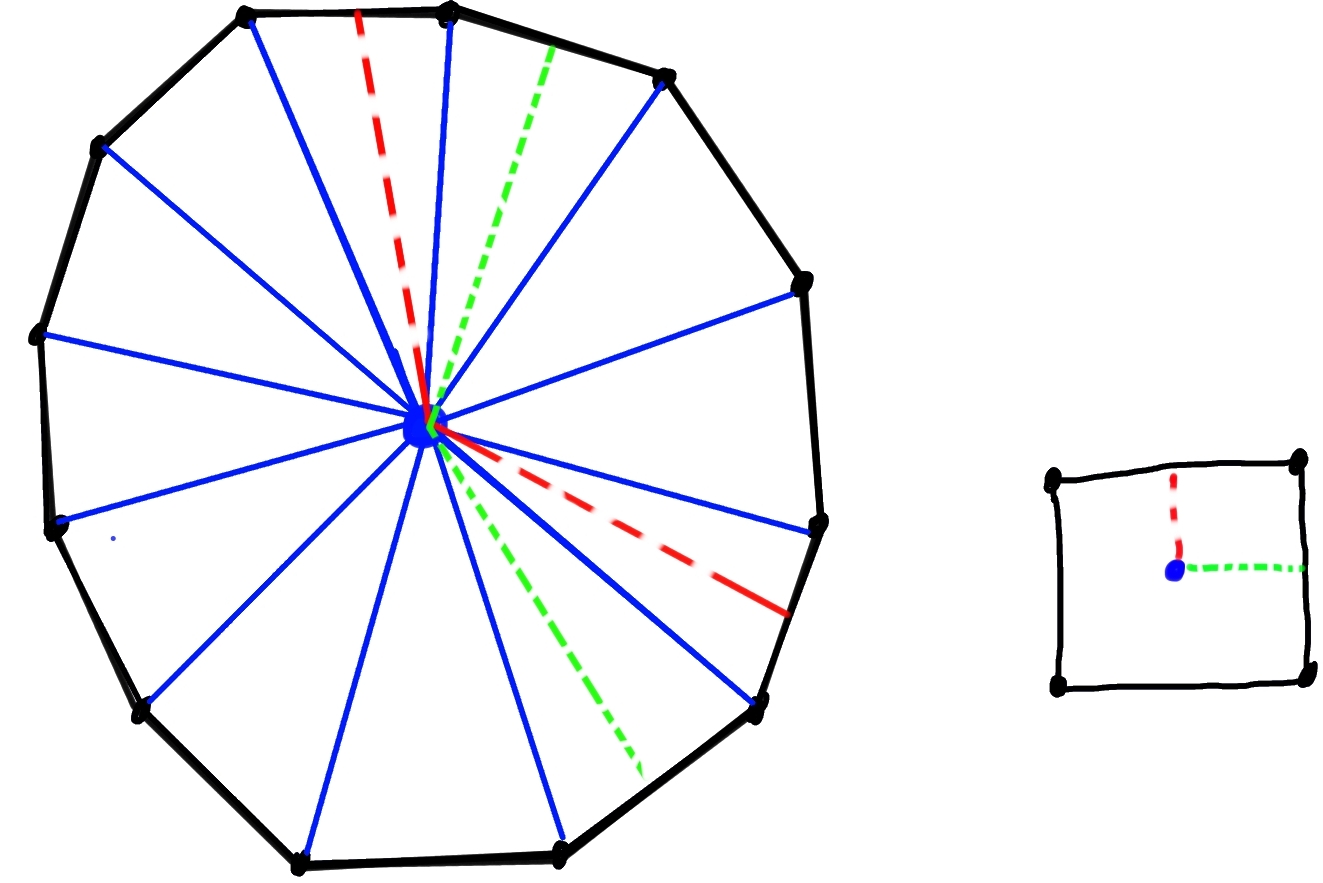}
  \caption{\label{fig:NaturalWalls} Parts of two natural walls passing through a new vertex}
\end{figure}

In general,  $\dot X$ may not be aspherical and $\pi_1\dot X$ may not be  word-hyperbolic. Indeed:
\begin{exmp}
A branched cover of a 2-sphere with two degree~$n$ branched points gives another 2-sphere.
A branched cover with three degree~$2$ branched points gives a torus.
\end{exmp}
Both these examples are linked to spheres in $X$, and the pathology in $\dot X$ will occur if there is a small 2-sphere in $\widetilde X$. However:
\begin{prob}
Suppose $X$ is aspherical. Is $\dot X$ atoroidal? Is $\pi_1\dot X$ aspherical? 
 Hyperbolic? 
\end{prob}

\begin{rem}[Impact on Asphericity]
It is unclear how  $\pi_2\dot X$ is impacted by the asphericity of $X$.
(Of course, without collapsing redundant 2-cells in $\X$, one obtains some essential spheres for each 2-cell of $\dot X$.)

Diagrammatic reducibility is a strong version of asphericity. 
We refer to \cite{Gersten87b} for detail and a connection to angle assignments.
A small step supporting the impact of asphericity of $X$ on $\dot X$ is 
that $\dot X$ is also diagrammatically reducible.
Indeed, any near-immersion to $\dot X$ projects to a near-immersion in $X$.
\end{rem}

\section{Bislim implies Bicollapsible}\label{sec:bislim bicollapse}
Given a preorder $\preceq$, the notation $a\prec b$ means $a\preceq b$ but $b\not\preceq a$.

\begin{defn}[bislim]\label{defn:bislim}
The 2-complex $X$ is \emph{bislim} if there is 
a $\pi_1X$-invariant preorder $\preceq$ on (significant) 1-cells of $\widetilde X$
such that the following conditions are satisfied:
\begin{enumerate}
\item Each 2-cell $r$ has associated (significant) 1-cells  $r^+$ and  $r^-$.
\item $\boundary_\p r$ traverses $r^+$ exactly once.
\item If $x^+ \subset \boundary y$ with $y\ne x$ then $x^+ \prec y^+$.
\item\label{bislim:4}  If $\boundary x \supset y^-$ with $y \ne x$ then $y^+ \prec x^+$.
\end{enumerate}
\end{defn}

Note that \eqref{bislim:4} implies that $r^+$ and $r^-$ are distinct whenever $r$ has no free faces.

\begin{rem}\label{rem:good stacking is bislim}
The notion of \emph{good stacking} was introduced by
Louder and Wilton in \cite{LouderWilton2014}. 
It is shown in \cite{BambergerCarrierGasterWise2018} that a 2-complex with a good stacking 
has a bislim structure in the above sense.

Definition~\ref{defn:bislim} is slightly more general than the original definition in \cite{HelferWise2015}, but the conclusions obtained for bislim complexes are the same.
We refer to \cite{BambergerCarrierGasterWise2018} for the details.
\end{rem}

\begin{prob}
Suppose that $\widetilde X$ is 2-collapsing, and no 2-cell of $\widetilde X$ is a proper power.
Is $\widetilde X$ bislim (with the trivial group action)?
Does $\widetilde X$ have a good stacking?
\end{prob}

 \begin{lem}\label{lem:bi bi}
 Let $X$ be bislim, and suppose that $r^-$ is traversed exactly once by $\boundary_\p r$ for each 2-cell $r$.
 Then $X$ is bicollapsible.
 \end{lem}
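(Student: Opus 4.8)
The plan is to take an immersion $Y \to X$ with $Y$ compact and $\pi_1 Y = 1$, and to produce two distinct collapsible cells in $Y$ by exploiting the preorder $\preceq$. Since $Y$ immerses in $X$, the significant $1$-cells of $Y$ inherit a preorder via their images, and each $2$-cell $r$ of $Y$ carries distinguished edges $r^+$ and $r^-$, each traversed exactly once by $\boundary_\p r$ (using the hypothesis on $r^-$). The key observation is that a $2$-cell $r$ collapses along the free face $r^+$ precisely when no other cell of $Y$ contains $r^+$ — and by bislim condition (3), if some $y \ne r$ has $r^+ \subset \boundary y$, then $r^+ \prec y^+$; so a $2$-cell whose $r^+$ is $\preceq$-maximal among all the $\{y^+\}$ must collapse along $r^+$.

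First I would dispose of the degenerate cases: if $Y$ is a graph, then since $\pi_1 Y = 1$ it is a tree, so it is either a single vertex, a single edge, or has at least two leaves, hence two spurs. So assume $Y$ has at least one $2$-cell. Among the finitely many $2$-cells of $Y$, the hypothesis that $Y$ immerses means each $2$-cell has $r^+$ traversed once. Consider the finite set $\{r^+ : r \text{ a } 2\text{-cell of } Y\}$. Pick $r$ with $r^+$ maximal in this set with respect to $\preceq$ (taking care that $\preceq$ is a preorder, so I should pick $r^+$ that is $\prec$-maximal, i.e.\ not strictly below any other; such an element exists in a finite preordered set). By condition (3), no other $2$-cell's boundary traverses $r^+$, and no edge of a graph-part can traverse it either in a way that blocks collapse since $\boundary_\p r$ traverses $r^+$ once; so $r$ collapses along the free face $r^+$. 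This gives one collapse.

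To get a second, distinct collapse I would do the same with the $r^-$ edges, or rather with a minimality argument: after collapsing, or by symmetry, I want a $2$-cell $r'$ (possibly equal to $r$) that collapses along a free face which is \emph{not} the free face $r^+$. Here condition \eqref{bislim:4} is the tool: if $\boundary x \supset y^-$ with $y \ne x$ then $y^+ \prec x^+$. I would look at the set $\{r^- : r \text{ a } 2\text{-cell of } Y\}$ and pick one that is, in a suitable sense, extremal — specifically, pick the $2$-cell $r'$ whose $(r')^+$ is \emph{minimal} among all $2$-cells; then by \eqref{bislim:4}, $(r')^-$ cannot be contained in $\boundary x$ for any $2$-cell $x \ne r'$ (that would force $(r')^+ \prec x^+$, fine, that's allowed — so I need the reverse: I want no $x$ with $(r')^- \subset \boundary x$, which by \eqref{bislim:4} would give $(r')^+ \prec x^+$; that's consistent, so minimality of $(r')^+$ doesn't immediately forbid it). Let me instead argue: the edge $(r')^-$ is either a free face of $r'$ (giving a collapse of $r'$ that is generically different from the $r^+$ collapse, since $r^- \ne r^+$ by the remark after the definition when $r$ has no free face, and if $r$ does have a free face we already have a collapse there that differs from any $r'^+$ collapse at a different cell), or $(r')^-$ is contained in some other $\boundary x$; in the latter case $x^+ \prec (r')^+ $ is false — we get $(r')^+\prec x^+$. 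So choose $r'$ with $(r')^+$ $\preceq$-maximal: then no such $x$ exists, so $(r')^-$ is a free face of $r'$, and if $r' = r$ then $r$ has the two free faces $r^+ \ne r^-$; if $r' \ne r$ we have collapses at two distinct cells.

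The main obstacle will be bookkeeping around the preorder (as opposed to a partial order): I must be careful that ``$\preceq$-maximal'' elements exist and that when $r^+$ and $(r')^+$ are $\preceq$-equivalent but the cells are distinct, the two collapses are genuinely at distinct cells; and I must handle the case where $Y$ has exactly one $2$-cell $r$, where I need $r$ to either be all of $Y$ (trivial case) or have a graph hanging off it producing a spur, together with the collapse along $r^+$ or $r^-$. I would also need to confirm that ``significant'' $1$-cells and the immersion $Y \to X$ interact correctly — i.e.\ that the pulled-back preorder still satisfies (1)–\eqref{bislim:4} on $Y$, which should be immediate since immersions are local isomorphisms and all the conditions are local containment statements. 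Once these are settled, in every case $Y$ is trivial or exhibits two distinct collapsible cells, which is exactly bicollapsibility.
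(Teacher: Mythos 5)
Your first collapse is the paper's: a $2$-cell $r$ whose $r^+$ is maximal for the strict order $\prec$ has $r^+$ as a free face, by condition (3) together with the fact that $\boundary_\p r$ traverses $r^+$ exactly once. The genuine gap is in your second collapse. After your back-and-forth with condition \eqref{bislim:4} you settle on choosing $r'$ with $(r')^+$ \emph{maximal}, so generically $r'=r$ (e.g.\ if the $+$-edges form a chain $r_1^+\prec r_2^+\prec r_3^+$ there is a unique maximal cell), and your conclusion is that the single cell $r$ has two free faces $r^+\neq r^-$. That does not establish bicollapsibility: Definition~\ref{def:bicollapsible} demands two \emph{distinct cells} that collapse, and the criterion of Lemma~\ref{lem:bicollapse criterion} is used in the same way (collapses of two distinct $2$-cells of the subcomplex transfer to two distinct cells of $Y$). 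One cell with two free faces says nothing about any other $2$-cell; in the chain example your recipe certifies nothing about $r_1,r_2$. The intended argument, which is the paper's, takes the \emph{minimal} cell for the second collapse: if no $s$ satisfies $s^+\prec r^+$, then $r^-$ is a free face, because $r^-\subset\boundary s$ with $s\neq r$ forces $s^+\prec r^+$. This is the orientation of \eqref{bislim:4} that the proof needs (and, together with (3), the only orientation under which the remark that \eqref{bislim:4} forces $r^+\neq r^-$ for a cell without free faces is correct); reading the displayed inequality in Definition~\ref{defn:bislim} literally is exactly what derailed you. With that in place one checks that a subcomplex with at least two $2$-cells has two \emph{distinct} extremal cells: distinct cells have distinct $+$-edges by (3), $\prec$ is a strict partial order on the finite set of $+$-edges, and if the unique maximal element coincided with the unique minimal one, any other $s^+$ would yield $r^+\prec s^+\prec r^+$. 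The maximal cell collapses along its $+$-edge and the minimal cell along its $-$-edge, giving two distinct collapsing cells.

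A secondary issue: you pull the preorder back along $Y\to X$, but the bislim structure is a $\pi_1X$-invariant preorder on the $1$-cells of $\widetilde X$, and conditions (3)--\eqref{bislim:4} concern $2$-cells of $\widetilde X$; there is in general no induced bislim structure on $X$ itself. Since $\pi_1Y=1$ you can lift $Y\to\widetilde X$, and it is cleaner to argue, as the paper does, on compact subcomplexes of $\widetilde X$ and then invoke Lemma~\ref{lem:bicollapse criterion} to pass to an arbitrary compact simply-connected immersed $Y$; your treatment of the degenerate cases (trees, a single $2$-cell with trees attached) is then subsumed by that lemma.
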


\begin{proof}
We verify the hypothesis of Lemma~\ref{lem:bicollapse criterion}.
Let $Y\subset \widetilde X$ be a compact subcomplex.
If $Y$ is a graph we are done.
If $Y$ has a single 2-cell $r$, then $r$ collapses along $r^+$ and we are done.
Now suppose $Y$ has at least two 2-cells.

Suppose there exists a 2-cell $r$ such that $r^+$ is maximal in the sense that
there does not exist a 2-cell $s$ with $r^+\prec s^+$.
Then $r$ collapses along $r^+$.
Indeed, $\boundary_\p r$ traverses $r^+$ once, and if $s$ is a 2-cell with $r^+$ in $\boundary s$ then $r^+\prec s^+$.

Suppose  there exists a 2-cell $r$ that is minimal in the sense that:
 there does not exist a 2-cell $s$ with $s^+\prec r^+$.
 Then $r$ collapses along $r^-$.
Indeed, here we use the additional hypothesis that 
$r^-$ is traversed exactly once by $\boundary_\p r$.
If $s$ is a 2-cell with $r^-$ in $\boundary s$, then $s^+\prec r^+$.

As $\prec$ is a partial order when restricted to $\{r^+:r\in 2$-cells$(Y)\}$, maximal and minimal elements in the former sense exist. 
Because $Y$ has at least two 2-cells, it has at least two extremal cells, each of which collapses as above.
\end{proof}
 \begin{exmp} The proof of Lemma~\ref{lem:bi bi} uses that $\boundary_\p r$ traverses $r^-$ exactly once. The following  example shows that extra hypothesis is necessary.
 Consider the presentation: $\langle a,b,c \mid aa^{-1}b, bc \rangle$.
 Its universal cover consists of an $a$-line, with $b$ and $c$ loops at each vertex.
 We declare $a\prec b \prec c$, which induces a preorder on the 1-cells of $\widetilde X$. It is immediate to check the conditions for bislimness, however $X$ is not bicollapsible: the concatenation of the 2-cells $aa^{-1}b$ and $bc$ has two cells but only one collapse.
\end{exmp}

\section{3-collapsibility and Cubulation}
In this section we work under the assumption that $X$ is $3$-collapsing. 

\begin{conv}
Following section \S\ref{sec:branched sec}, we again fix the convention that boundary cycles of 2-cells of $X$ are immersed.
\end{conv}

A \emph{ladder} $L$ is a disk diagram that is the union of a finite sequence
of closed 1-cells and 2-cells $C_1,\ldots, C_n$ where $n\geq 2$
and $C_i\cap C_j=\emptyset$ when $j>i+1$.
Equivalently, for $1<i<n$, each $C_i$ is a cutcell with two lobes.

\begin{thm}\label{thm:123}
 For every near-immersion $D\rightarrow \X$ with $D$ compact and contractible, either:
\begin{enumerate}
\item $D$ is a single 0-cell or 2-cell,
\item $D$ is a ladder,
\item $D$ has at least three shells and/or spurs.
\end{enumerate}
\end{thm}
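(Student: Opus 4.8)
\textbf{Proof proposal for Theorem~\ref{thm:123}.}

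The plan is to reduce the statement about near-immersions $D\rightarrow \X$ to an assertion about compact simply-connected subcomplexes of $\widetilde X$ (the ``unbranched'' picture), exactly as in the proofs of Corollary~\ref{cor:2-cells embed in Xtilde} and Corollary~\ref{cor:shells or spurs in a complex}. First I would compose with the branched covering $\X\rightarrow X$ to obtain a cellular map $D\rightarrow X$, and apply Lemma~\ref{lem:towers exist} to get a maximal tower lift $D\rightarrow Y$ with $Y\rightarrow X$ a tower map, $Y$ compact, $\pi_1Y=1$, and $Y\rightarrow \widetilde X$ an immersion. As in the earlier corollaries, one sees that collapses of cells in $Y$ pull back to spurs, shells, or cutcells in $D$: a collapse of a $1$-cell in $Y$ yields a spur in $D$, while a collapse of a $2$-cell $C$ along a free face in $Y$ yields, for the preimage $\dot C$ in $D$ with $\partial_\p \dot C = w^n$ and $n\geq 2$, a shell or a cutcell whose innerpath (resp.\ lobe-attachment) is governed by Remark~\ref{rem:tiny innerpath} — the relevant feature is that all $n$ copies of the free edge appear on $\partial_\p D$, so the lobes/innerpath live inside a single component of $\partial_\p \dot C$ minus the $\integers_n$-orbit of that edge.

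The heart of the argument is therefore to show: \emph{a compact simply-connected immersed subcomplex $Y$ of $\widetilde X$, where $X$ is $3$-collapsing, is either a tree, a single $2$-cell with a free face, a ladder, or has at least three distinct collapses along free faces.} I would take the image $T=\image(Y\rightarrow \widetilde X)$ and then its image $\underline T$ in $\underline{\widetilde X}$ (which equals $T$ up to identifying duplicate $2$-cells; note that $3$-collapsing forces no duplicates and no proper-power $2$-cells by the Remarks following Definition~\ref{def:n-collapsing}, so in fact $\underline T = T$). If $T$ has at least three $2$-cells, Definition~\ref{def:n-collapsing} hands us three distinct collapses in $T$, which pull back to three collapses in $Y$, and we land in case (3). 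If $T$ has at most two $2$-cells, I would analyze cases directly: zero $2$-cells gives a tree (case (1) or, via spurs of $Y$, case (3)); one $2$-cell gives either $\mathrm{cl}(r)$ (case (1)) or $r$ with trees attached, where each tree supplies a spur, and combined with the collapse of $r$ we need a slightly careful count — if two or more trees are attached we get $\geq 3$ collapses, otherwise $r$ plus one tree, which after pulling back to $D$ may still be a ladder or acquire extra shells/spurs from the branching. The genuinely delicate case is $T$ with exactly two $2$-cells $r_1, r_2$: by Lemma~\ref{lem:2cell intersections} each $\partial r_i$ has $\euler = 0$ and $\partial r_1 \cap \partial r_2$ is connected, so $T$ is essentially $r_1 \cup r_2$ glued along an arc, possibly with trees attached; $Y$ immersing onto this can produce the ladder of case (2), and I must verify that whenever $Y$ is \emph{not} a ladder the immersion $Y\rightarrow T$ or an attached tree forces a third collapse.

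The main obstacle I anticipate is exactly this two-$2$-cell bookkeeping: disentangling when the near-immersion $D\rightarrow \X$ genuinely ``is'' a ladder (a linear chain $C_1,\dots,C_n$ of branched $2$-cells, each interior one a cutcell with two lobes) versus when the combinatorics of the immersion $Y\rightarrow \widetilde X$, or the $\integers_{n_i}$-fold boundary of the branched cells, introduces an additional extremal feature. I expect to handle this by arguing that if $D$ is not a single cell, then $D$ has a spur or a shell or a cutcell at ``each end'' — and that the only way to avoid a \emph{third} such feature is for $D$ to have a strictly linear cutcell structure, which is the definition of a ladder. Concretely, one runs the outermost-lobe argument of Lemma~\ref{lem:complex dehn property}: if $D$ has $\geq 2$ cutcells whose lobes are not nested in a line, there are $\geq 3$ outermost lobes, hence $\geq 3$ shells/spurs; if the cutcell structure \emph{is} linear, $D$ is a ladder. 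The remaining subtlety — a ladder that nonetheless has a genuine shell at one end and could be ``extended'' — is absorbed by noting that for a ladder we are content with conclusion (2), and a non-ladder linear-looking diagram with an extra free face at an end falls into (3).
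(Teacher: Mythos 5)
There is a genuine gap, and it sits exactly where your sketch becomes vague. The step ``If $T$ has at least three $2$-cells, Definition~\ref{def:n-collapsing} hands us three distinct collapses \dots and we land in case (3)'' is not correct as stated: a collapse of a $2$-cell $R$ along a free face upstairs only tells you (via the \`a la Newman observation you yourself quote) that each $2$-cell of $D$ mapping to $R$ is a shell \emph{or a cutcell} of $D$, and cutcells are not counted in conclusion (3). The cutcell possibility is the whole point of the theorem --- it is why ``ladder'' appears as an alternative --- and your proposal never supplies the mechanism that resolves it. The paper's proof is an induction on $D$ itself: choosing $C_1,C_2,C_3$ in $D$ lying over three collapsing $2$-cells of the image $Y\subset\widetilde X$, if some $C_i$ is a cutcell one applies the theorem inductively to each lobe of $C_i$ (each lobe being again a compact contractible near-immersion to $\X$); a lobe then either contributes a shell beyond $C_i$ or is a ladder, so three lobes force three shells, and two lobes force either three shells or that $D$ is a ladder obtained by combining two ladders along $C_i$. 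Your final paragraph gestures at ``the outermost-lobe argument of Lemma~\ref{lem:complex dehn property}'', but that lemma's hypothesis (every contractible subcomplex in the family already has two cells each of which is a cutcell, shell or spur) has not been verified in this setting; what is needed per lobe is either the induction just described or an appeal to Corollary~\ref{cor:shells or spurs in a complex} (available because $3$-collapsing implies bicollapsible, Lemma~\ref{lem:2-collapsing implies bicollapsible}), and you invoke neither. Likewise ``if the cutcell structure is linear, $D$ is a ladder'' is unjustified: a linear chain of cutcells with extra $2$-cells or trees attached is not a ladder, and producing a third shell or spur there again requires the per-lobe input.

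A secondary, structural criticism: your proposed ``heart of the argument'' --- a tree~/~single-cell~/~ladder~/~three-collapses trichotomy for compact simply-connected subcomplexes of $\widetilde X$ --- is pitched at the wrong level. For subcomplexes of $\underline{\widetilde X}$ with at least three $2$-cells the three collapses are immediate from Definition~\ref{def:n-collapsing}, and the ladder alternative is irrelevant there; the ladder in the theorem lives in $D$, and the image in $\widetilde X$ of a ladder $D$ need not be a ladder (many branched cells of $D$ can map onto few cells of $\widetilde X$), nor conversely. Accordingly, the ``two-$2$-cell bookkeeping'' you single out as delicate is not where the difficulty lies: when the image has at most two $2$-cells, every $2$-cell of $D$ maps to a collapsing cell and so is a shell or cutcell, and the count is the easy part of the paper's proof. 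The real work is the cutcell/lobe induction on $D$ described above, which your proposal omits.
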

As in Remark~\ref{rem:tiny innerpath}, each shell $R$ above has innerpath $|S|<\frac1n|\boundary_\p R|$ where $n=\degree(R)$.
\begin{proof}
The proof is by induction.

Let $Y$ be the image of the lift of $D\rightarrow \dot X \rightarrow X$ to $\widetilde X$.

Suppose $Y$ has a single 2-cell $R$ which collapses along a 1-cell $e$.
Then each 2-cell of $D$ is a shell or cutcell.
It is easy to see that the conclusion holds in that case.

Suppose $Y$ has two 2-cells $R_1,R_2$. Then they  collapse along 1-cells
$e_1,e_2$. Hence the same reasoning holds as in the previous case.

Suppose $Y$ has three or more 2-cells, then $Y$ has 2-cells
$R_1,R_2,R_3$ collapsing along 1-cells $e_1,e_2,e_3$.
Each 2-cell $C$ mapping to some $R_i$ is a cutcell or shell of $D$.
Note that some $C_i$ maps to $R_i$ for each $i$.
If each $C_i$ is a shell, we are done as $D$ has three shells.
Otherwise, at least one $C_i$ is a cutcell.

If some $C_i$ has three lobes, then each lobe contains a shell (beyond $C_i$) by induction, so $D$ has three shells.

If some $C_i$ has two lobes, then each of these is either ladder or contains two shells (beyond $C_i$) and hence $D$ is either a ladder, as it is built by combining two ladders along $C_i$, or $D$ has three shells.
\end{proof}

For a subset $W\subset \X$, let $N(W)\subset \X$ denote the \emph{carrier} of $W$, i.e.~the smallest subcomplex of $\X$ containing $W$. 

Recall the graph $\Gamma\rightarrow \X$, a union of divisive trees, whose images determine natural walls in $\X$ (see Figure~\ref{fig:TwoPerEdge}). Note that each vertex of $\Gamma$ maps to 
the interior of either a 2-cell or a 1-cell; we call the former \emph{face} vertices of $\Gamma$.

Consider an arc $J \subset \Gamma$ between face vertices of $\Gamma$. Let $M(J)$ be the 2-complex consisting of the union of a closed 2-cell for each face vertex of $J$, such that 2-cells associated to semi-consecutive vertices meet along an arc in their boundaries corresponding to the maximal piece between them, containing the intervening edge vertex. Observe that there is an immersion $M(J)\rightarrow \X$. 

\begin{lem}\label{lem:M(J) ladder}
$M(J)$ is a ladder.
\end{lem}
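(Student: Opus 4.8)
The plan is to apply Theorem~\ref{thm:123} to the complex $M(J)$ and rule out the ``three shells and/or spurs'' alternative, forcing $M(J)$ to be either a ladder (possibly a single 2-cell, which is a degenerate case we handle separately) or a single 0-cell (which cannot happen since $J$ contains at least one face vertex). First I would observe that $M(J)\to\X$ is a near-immersion with $M(J)$ compact and contractible: compactness and contractibility are clear from its construction as a linear chain of closed 2-cells glued successively along arcs in their boundaries, and the near-immersion property is exactly the observation already recorded before the statement. So Theorem~\ref{thm:123} applies.

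The key step is to bound the number of shells and spurs of $M(J)$. By construction $M(J)$ has no spurs: every 1-cell lies on the boundary of some 2-cell, and the graph $M(J)^1$ retracts onto the embedded circle boundary of the outermost cells, so there are no valence-1 vertices in $\partial M(J)$ other than none at all. For shells, I would argue that only the two \emph{extreme} 2-cells of the chain $C_1,\dots,C_n$ — the ones associated to the first and last face vertices of $J$ — can be shells. Indeed, an interior cell $C_i$ ($1<i<n$) meets $C_{i-1}$ and $C_{i+1}$ along two disjoint nontrivial arcs of its boundary; since these arcs are the maximal pieces to its neighbours, the part of $\partial_\p C_i$ lying on $\partial M(J)$ is broken into two components, so $C_i$ is a cutcell with two lobes, not a shell (its interior arc on $\partial M(J)$ is not a single arc). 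Hence $M(J)$ has at most two shells and no spurs, so alternative (3) of Theorem~\ref{thm:123} is impossible. This leaves alternatives (1) and (2): either $M(J)$ is a single 2-cell or a single 0-cell — impossible when $n\ge2$, and when $n=1$ the ``ladder'' degenerates to one 2-cell, which I would either absorb into the statement's hypothesis ($n\ge 2$ in the definition of ladder, matching $J$ having at least two face vertices) or note separately — or $M(J)$ is a ladder, which is the desired conclusion.

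The step I expect to be the main obstacle is verifying carefully that the interior cells $C_i$ are genuinely cutcells with two lobes, i.e.\ that the two gluing arcs to $C_{i-1}$ and $C_{i+1}$ are disjoint and nontrivial and really do separate $\partial_\p C_i$ into exactly two boundary components of $M(J)$. This requires knowing that consecutive gluing arcs on a single $C_i$ do not overlap or abut, which is where the hypothesis that the gluing arc between semi-consecutive 2-cells is the \emph{maximal piece} between them — together with $3$-collapsingness controlling how 2-cells of $\X$ intersect (Lemma~\ref{lem:2cell intersections} gives $\partial R_1\cap\partial R_2$ connected, so ``the'' maximal piece is well-defined and is a single arc) — does the real work. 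Once that local picture is established, identifying $M(J)$ with a ladder is immediate from the equivalent description of a ladder as a chain in which each interior cell is a cutcell with two lobes and $C_i\cap C_j=\emptyset$ for $j>i+1$; the last disjointness condition follows because a divisive tree embeds (Lemma~\ref{bicollapsible implies tree}), so non-consecutive face vertices of $J$ correspond to 2-cells sharing no edge.
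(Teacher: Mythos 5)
Your overall scaffolding (Theorem~\ref{thm:123} plus a count of shells and spurs) differs from the paper's, but that is not the issue; the issue is that the one claim your argument actually needs is exactly the one you leave unproved. Every branch of your proposal --- interior cells are cutcells with two lobes, hence not shells, hence alternative (3) of Theorem~\ref{thm:123} is excluded; and likewise the final ``immediate from the definition'' identification of $M(J)$ with a ladder --- rests on the assertion that the two gluing arcs on an interior cell $C_i$ (the maximal piece shared with $C_{i-1}$ and the maximal piece shared with $C_{i+1}$) do not overlap on $\partial_\p C_i$. You explicitly flag this as ``the main obstacle'' and then only gesture at it: the consequence of Lemma~\ref{lem:2cell intersections} you invoke is that $\partial R_1\cap \partial R_2$ is connected, i.e.\ that each maximal piece with a given neighbour is a single well-defined arc. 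That says nothing about why two \emph{different} pieces, coming from two different neighbours, cannot overlap on the boundary of the same cell. This missing step is precisely the entire content of the paper's (short) proof: if the two maximal pieces overlapped, one would produce a 2-cell of $\widetilde X$ whose boundary cycle is a concatenation of two pieces, which is impossible by Lemma~\ref{lem:2cell intersections} (the same deduction appears in the proof of Lemma~\ref{lem:equisector}). As written, your proposal therefore has a genuine gap.

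A secondary point: once the non-overlap of the two arcs is established, the detour through Theorem~\ref{thm:123} buys you nothing --- as you yourself observe, the ladder conclusion is then immediate from the definition (each interior cell is a cutcell with two lobes, and non-consecutive cells are disjoint by the very construction of $M(J)$), which is exactly how the paper concludes. Invoking Theorem~\ref{thm:123} only adds hypotheses to verify (that $M(J)\rightarrow \X$ is a near-immersion and $M(J)$ is compact and contractible, plus the bookkeeping about spurs and the two extreme shells), and it cannot substitute for the missing argument, since ruling out alternative (3) already requires it.
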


\begin{proof}
Observe that, in the construction of $M(J)$, the two maximal pieces in the boundary of a cell cannot overlap; if they did one would find in $\widetilde X$ a 2-cell whose boundary is a concatenation of two pieces, which is impossible by Lemma~\ref{lem:2cell intersections}. 
This implies that $M(J)$ is a ladder by definition.
\end{proof}

\begin{lem}\label{lem:M(J) convex}
$M(J)\rightarrow \X$ is injective, and its image is a convex subcomplex $N(J)\subset \X$.
\end{lem}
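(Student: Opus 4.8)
The plan is to prove injectivity and convexity of $M(J)\to \X$ by a minimal-counterexample argument built on Theorem~\ref{thm:123}, running in parallel with the proof of Lemma~\ref{bicollapsible implies tree}. First I would set up: $M(J)$ is a ladder by Lemma~\ref{lem:M(J) ladder}, so it comes with a natural spine and a well-defined ``width'' along $J$; the immersion $M(J)\to \X$ traces out the carrier $N(J)$, and what must be ruled out is that the map folds two distinct cells of $M(J)$ onto the same 2-cell of $\X$, or that $N(J)$ fails to be locally convex (i.e.\ some geodesic bigon or a vertex with two ``incoming directions'' escapes $N(J)$). I would reduce both failures to the existence of a reduced annular or disk diagram of small area: if $M(J)\to \X$ is not injective, pick a shortest subarc $J'\subset J$ whose two endpoint face-cells map to the same 2-cell $R$ of $\X$ (or more generally whose images first coincide), and build an annular diagram $A\to \X$ carrying the corresponding closed loop, as in Lemma~\ref{bicollapsible implies tree}. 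Since $J$ is embedded in $\Gamma$ (by Lemma~\ref{bicollapsible implies tree}(2)) and consecutive cells of $M(J)$ overlap only along their maximal pieces, $A\to\X$ is a near-immersion, and $A$ itself contains at most one or two shells coming from its extremal 2-cells.

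Next I would cap off: choose a reduced disk diagram $D\to\X$ whose boundary path generates $\pi_1 A$, glue to form $A\cup_P D\to \X$, take a maximal tower lift $E$, and apply Theorem~\ref{thm:123} to $E$. The point is that $E$ is compact and contractible (after the tower lift, $\pi_1 E = 1$), so $E$ is a single cell, a ladder, or has at least three shells/spurs. The ladder case is exactly what gives the desired geometric conclusion — a ladder configuration of 2-cells immersing in $\X$ cannot close up nontrivially or wrap, since the two ends of a ladder are shells whose innerpaths (of length $<\frac1n|\boundary_\p R|$ by Remark~\ref{rem:tiny innerpath}) would have to be absorbed into $A$ or into $D$; absorption into $D$ contradicts reducedness of $D$, and absorption into $A$ contradicts that $A$ has at most one shell on the relevant side, exactly mirroring the argument of Lemma~\ref{bicollapsible implies tree}. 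The ``three shells/spurs'' case is likewise impossible: $A\cup_P D$ has no spurs, and its shells are constrained to the two extremal cells of $M(J)$'s ladder plus whatever $D$ contributes, and reducedness of $D$ kills the $D$-shells, leaving at most two. The single-cell case forces $M(J)$ to be a single 2-cell, which is the trivial situation. This contradiction establishes injectivity.

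For convexity, once $M(J)\to\X$ is an embedding onto $N(J)$, I would argue that $N(J)$ is full (contains every cell of $\X$ all of whose lower-dimensional faces it contains) and locally convex at vertices, again by capping off a would-be geodesic that exits $N(J)$ with a disk diagram and invoking Theorem~\ref{thm:123}; since $\X$ is simply connected and built (after the branched-cover and triangle considerations, or directly from the ladder structure) so that geodesics are unique, local convexity upgrades to global convexity by the standard argument that a local geodesic in a simply-connected complex satisfying a $\mathrm{CAT}(0)$-type link condition is a global geodesic. Here I would lean on the fact, established in the preceding sections, that $\X$ carries a $\mathrm{CAT}(0)$ metric in which the natural trees and hence the ladders $M(J)$ are convex, so that $N(J)$ is the carrier of a convex set and is therefore itself convex.

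The main obstacle I expect is the bookkeeping in the minimal-counterexample step: carefully arranging that the annular diagram $A$ carrying the non-injective or non-geodesic loop is genuinely a \emph{near-immersion} with the claimed bound on its number of shells, so that after capping with a reduced $D$ and taking the tower lift, Theorem~\ref{thm:123} delivers a genuine contradiction rather than a ``ladder'' conclusion that is consistent with what we are trying to prove. The subtlety is that $M(J)$ is \emph{itself} a ladder, so the ladder alternative in Theorem~\ref{thm:123} is not automatically contradictory — one must use the innerpath size bound of Remark~\ref{rem:tiny innerpath} (a shell's innerpath is shorter than $\frac1n$ of the perimeter) together with the fact that consecutive $M(J)$-cells meet along \emph{maximal} pieces to show that the ladder cannot close up or fold, exactly the mechanism used in Lemma~\ref{bicollapsible implies tree}. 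Beyond that, the convexity half is essentially a repackaging of the $\mathrm{CAT}(0)$-carrier argument and should be routine given the metric constructed earlier.
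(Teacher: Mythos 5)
Your plan for the convexity half has a genuine gap: you propose to ``lean on'' the CAT(0) metric in which the natural trees are convex, but that metric is only constructed under the hypotheses of Theorem~\ref{thm:typical branched} (girth of links at least $p$, perimeters at least $q$, with $\frac2p+\frac1q\le 1$), none of which are assumed in this section — here the only hypothesis is that $X$ is $3$-collapsing with immersed attaching maps. There is likewise no link condition on $\X$ and no uniqueness of geodesics in its $1$-skeleton, so the ``local convexity upgrades to global convexity'' step has nothing to stand on. Convexity in Lemma~\ref{lem:M(J) convex} is combinatorial convexity of $N(J)$ in $\X$, and it has to be extracted from a diagram argument, not from a metric.

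The paper in fact proves injectivity and convexity in one stroke, and — contrary to your plan of ruling the ladder alternative out — it uses the ladder case of Theorem~\ref{thm:123} \emph{positively}. Take a geodesic $\gamma$ path-homotopic to a path $\sigma\rightarrow M(J)$, choose a disk diagram $D$ between $\gamma$ and $\sigma$ of minimal area, form $Y=D\cup_\sigma M(J)$ and pass to a maximal tower lift $T$. The only cells of $T$ that can be shells are the images of the two extremal $2$-cells of $M(J)$: a shell whose outer path lay on $\gamma$ would contradict geodesity (its innerpath is short, as in Remark~\ref{rem:tiny innerpath}), and the $2$-cells coming from $D$ cannot be shells because $D$ is reduced and, by minimality, forms no cancellable pair with $M(J)$. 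So $T$ has at most two shells and is a ladder by Theorem~\ref{thm:123}; then $M(J)$ sits in $T$ as a subladder, and $\gamma$ cannot wrap around an outermost $2$-cell of $T$ because a piece is less than half a perimeter by Lemma~\ref{lem:2cell intersections}, forcing $\gamma\subset M(J)$ and yielding both injectivity and convexity simultaneously. Your annulus-plus-cap argument for injectivity, modeled on Lemma~\ref{bicollapsible implies tree}, is closer in spirit, but as you yourself flag, the ladder alternative is not automatically contradictory there, and your sketch of why a ladder ``cannot close up or fold'' is precisely the content that is missing; the paper sidesteps this entirely by never attempting to exclude ladders.
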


\begin{proof}
Let $\gamma\rightarrow \X$ be a geodesic that is path homotopic to a path $\sigma\rightarrow M(J)\rightarrow \X$.
Let $D$ be a disk diagram between $\gamma$ and $\sigma \rightarrow \X$,
and suppose $(D,\sigma)$ are chosen so that $D$ has minimal area with this property.
Let $Y=D\cup_\sigma M(J)$, and let $Y\rightarrow T$ be a maximal tower lift of $Y\rightarrow \X$.

Observe that the only possible 2-cells of $T$ that are shells are the 2-cells that are images of the first and last 2-cells of $M(J)$.
Indeed, no outer path of a shell lies on $\gamma$, since it is a geodesic, and the remaining 2-cells to consider in $D$ cannot be shells, since they are surrounded by reduced diagrams. 
Indeed, $D$ is reduced and the 2-cells of $D$ along $M(J)$ cannot form a cancellable pair with 2-cells of $M(J)$ by minimality.
Hence at most two 2-cells of $T$ are shells, so $T$ is a ladder by Theorem~\ref{thm:123}.

The map $M(J)\rightarrow T$ must send consecutive 2-cells to consecutive 2-cells, and semi-consecutive 2-cells to semi-consecutive 2-cells. Hence $M(J)$ is a subladder of $T$. 

We now consider the path $\gamma \rightarrow T$ whose endpoints lie on $M(J)$. If $\gamma$ were not contained in $M(J)$, then the outermost 2-cell of $T$ that it wraps around shows that $\gamma$ is not a geodesic, since a piece is less than half the perimeter of a 2-cell by Lemma~\ref{lem:2cell intersections}. Therefore $\gamma\subset M(J)$. It follows that $M(J)\rightarrow \X$ is injective, and the image is convex.
\end{proof}

\begin{lem}\label{lem:convex carriers}
Let $W$ be a wall in $\X$.
Then $N(W)$ is convex.
\end{lem}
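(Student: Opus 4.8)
The plan is to deduce convexity of $N(W)$ from the already-established convexity of the pieces $N(J)$ for arcs $J$ of the divisive tree $T$ underlying $W$ (Lemma~\ref{lem:M(J) convex}), together with the tree structure of $T$ guaranteed by Lemma~\ref{bicollapsible implies tree}. First I would fix notation: $W$ is the wall alongside a divisive tree $T\subset\X$, and $N(W)=\bigcup_J N(J)$ where $J$ ranges over arcs of $T$ between face vertices (equivalently, $N(W)$ is the union of the closed 2-cells of $\X$ met by $T$, glued along the 1-cells met by $T$; note $N(W)$ deformation retracts onto $T$, which is a tree, so $N(W)$ is simply-connected).

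To prove convexity it suffices to show that any geodesic $\gamma\to\X$ with endpoints on $N(W)$ actually lies in $N(W)$. Following the pattern of Lemma~\ref{lem:M(J) convex}, I would choose a path $\sigma\to N(W)$ with the same endpoints as $\gamma$, take a minimal-area disk diagram $D$ between $\gamma$ and $\sigma$, set $Y = D\cup_\sigma N(W)$, and pass to a maximal tower lift $Y\to E$ with $E\to\X$. Since $\pi_1 Y = 1$ (both $D$ and $N(W)$ are simply-connected and they are glued along the connected path $\sigma$), $E$ is compact and contractible, so Theorem~\ref{thm:123} applies: $E$ is a single cell, a ladder, or has at least three shells/spurs. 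I would argue as before that shells of $E$ can only come from the 2-cells of $N(W)$ lying at the very ``ends'' of the portion of $T$ that $\sigma$ traverses — no shell can have its outerpath on the geodesic $\gamma$, and no 2-cell of $D$ can be a shell since $D$ is reduced and minimality forbids cancellable pairs between $D$ and $N(W)$, while a 2-cell of $N(W)$ in the interior of $T$ meets $T$ at two separate pieces and so is a cutcell, not a shell. Hence $E$ has at most two shells, so $E$ is a ladder (the single-cell case being trivial).

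Once $E$ is a ladder, the image of $T$ in $E$ must be an embedded arc running monotonically along $E$ (the divisive tree structure forces consecutive 2-cells of $N(W)$ to go to consecutive 2-cells of the ladder), so $N(W)$ maps to a subladder $M\subset E$. The geodesic $\gamma\to E$ has both endpoints on $M$; if $\gamma$ left $M$ it would wrap around an outermost 2-cell of $E$ beyond $M$, but a piece has length less than half the perimeter of a 2-cell by Lemma~\ref{lem:2cell intersections}, so such a detour could be shortened — contradicting that $\gamma$ is geodesic. Therefore $\gamma\subset M\subset N(W)$ (and pulling back, $\gamma$ lies in $N(W)$ in $\X$), which proves $N(W)$ is convex.

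The main obstacle is the bookkeeping needed to make ``$E$ has at most two shells'' precise: I need to verify carefully that every 2-cell of $N(W)$ interior to $T$ remains a cutcell (not a shell) in $E$ — using that $T$ meets such a 2-cell along two distinct maximal pieces, via Lemma~\ref{lem:2cell intersections} and Remark~\ref{rem:tiny innerpath} — and that the gluing $Y = D\cup_\sigma N(W)$ introduces no spurs and no new cancellable pairs beyond those already handled by minimality of $D$. The rest is a direct adaptation of the proof of Lemma~\ref{lem:M(J) convex} from a single arc $J$ to the whole tree $T$.
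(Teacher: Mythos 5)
Your opening sentence states the right plan (reduce to the already-proved convexity of the arc carriers $N(J)$), and that plan is exactly the paper's proof: since the divisive tree $T$ underlying $W$ is a tree, any two points of $N(W)$ lie in closed 2-cells corresponding to face vertices of $T$, hence both lie in $N(J)$ for the arc $J\subset T$ joining those face vertices; as $N(J)$ is convex by Lemma~\ref{lem:M(J) convex}, every geodesic between the two points stays in $N(J)\subset N(W)$, and $N(W)$ is convex. That is the entire argument, and you should simply carry it out.

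What you actually execute, however, is a different and flawed argument: you re-run the tower/ladder machinery on $Y=D\cup_\sigma N(W)$ with the \emph{whole} wall carrier, and this has two genuine gaps. First, $N(W)$ is in general non-compact (walls typically have infinite, virtually free stabilizers), so $Y$ is non-compact and neither Lemma~\ref{lem:towers exist} nor Theorem~\ref{thm:123} applies to it; to restore compactness you would cut down to the 2-cells along the arc of $T$ spanned by the endpoints, which is precisely the reduction to $M(J)$ you were trying to avoid. Second, and more seriously, divisive trees branch: whenever an edge dual to $W$ lies on three or more 2-cells of $\X$, several spiders are glued at that foot, so $N(W)$ is a tree of 2-cells rather than a ladder-like strip. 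Then your claim that shells of $E$ can only arise from the two 2-cells at the ends of the portion of $T$ traversed by $\sigma$ fails (each leaf 2-cell of a branch off $\sigma$ meets the rest of $Y$ along a single piece and is a shell by Remark~\ref{rem:tiny innerpath}), so you cannot conclude that $E$ is a ladder, and the assertion that ``$N(W)$ maps to a subladder $M\subset E$'' is false in the presence of branching. None of these issues arises for a single arc $J$, which is why the paper proves convexity for $N(J)$ first and then deduces Lemma~\ref{lem:convex carriers} in one line.
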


\begin{proof}
This is an immediate consequence of Lemma~\ref{lem:M(J) convex}: any two points in $N(W)$ lie in the image of $M(J)$, for some arc $J\subset \Gamma$.
\end{proof}

\begin{rem}The same proof shows that $N(T)$ is convex for each divisive tree $T$.\end{rem}

\begin{cor}\label{cor:braided bigons}
Let $J_1\subset T_1$ and $J_2\subset T_2$ be arcs in divisive trees that start and end on face vertices.
Suppose  $J_1$ and $ J_2$ have the same endpoints.
Then $N(J_1)=N(J_2)$.
Hence their pairs of dual edges lie in a piece.
\end{cor}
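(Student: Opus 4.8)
\textbf{Proof plan for Corollary~\ref{cor:braided bigons}.}
The plan is to reduce the statement to the convexity of wall/divisive-tree carriers established in Lemma~\ref{lem:M(J) convex}. First I would set up notation: let $v$ and $w$ be the common endpoints of $J_1$ and $J_2$; these are face vertices, so they correspond to (centers of) 2-cells of $\X$. By Lemma~\ref{lem:M(J) ladder}, each $M(J_k)$ is a ladder, and by Lemma~\ref{lem:M(J) convex} the map $M(J_k)\rightarrow \X$ is injective with convex image $N(J_k)\subset \X$. In particular $N(J_1)$ and $N(J_2)$ are both convex subcomplexes, and they share two 2-cells: the ones at $v$ and the ones at $w$.

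The key step is to argue that a convex subcomplex of $\X$ of the form $N(J)$ is uniquely determined by its pair of extreme 2-cells. Here is the mechanism I would use. Since $N(J_1)$ is convex and contains the 2-cells at $v$ and $w$, any geodesic in $\X$ between a point of the 2-cell at $v$ and a point of the 2-cell at $w$ lies inside $N(J_1)$; likewise inside $N(J_2)$. But inside a ladder $N(J_k)$, the combinatorial structure is forced: consecutive 2-cells meet along the maximal piece between them (containing the intervening edge vertex), and the proof of Lemma~\ref{lem:M(J) convex} shows that a path through the ladder that skips around the "thin" side of any interior 2-cell fails to be geodesic, because a piece is strictly less than half a perimeter by Lemma~\ref{lem:2cell intersections}. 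Consequently the sequence of 2-cells of $N(J_k)$ is exactly the sequence of 2-cells crossed by any geodesic from the $v$-cell to the $w$-cell, read off in order. Since this sequence depends only on the endpoints $v,w$ and on $\X$ (not on which divisive tree $J_k$ came from), the two ladders $N(J_1)$ and $N(J_2)$ consist of the same ordered list of 2-cells, hence $N(J_1)=N(J_2)$ as subcomplexes of $\X$.

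For the final sentence, recall that the dual edge to an arc $J_k\subset T_k$ is the pair of edges of the associated wall crossing out of the first and last 2-cells (the feet data of the spiders). Since $N(J_1)=N(J_2)$ carries both arcs, and inside this common ladder the divisive tree passes through the centers of the 2-cells and the centers of the intervening edge vertices (as in the remark after Lemma~\ref{bicollapsible implies tree}), the two arcs $J_1$ and $J_2$ traverse the same edge-vertices of $\Gamma$; in particular their first and last 2-cells coincide and the distinguished edges selected in those 2-cells are forced to be the same piece. So the two pairs of dual edges both lie in the (single) piece along which the $v$-cell meets the next 2-cell of the common ladder, and similarly at the $w$-end, giving the claim.

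\textbf{Expected main obstacle.} The delicate point is pinning down "uniquely determined by its extreme 2-cells": one must rule out that $N(J_1)$ and $N(J_2)$ are two different ladders sharing only their ends, i.e.\ that a bigon of ladders could occur. The argument above dissolves this by using that convexity forces each ladder to be the geodesic corridor between its ends, but one should be careful that the first and last 2-cells at $v$ and $w$ really are shared (not merely isomorphic), which uses that $v$ and $w$ are genuine common vertices of $\Gamma$ in $\X$, and one should double-check the edge cases where $J_1$ or $J_2$ has only two face vertices, so that the ladder is a single pair of 2-cells and the statement is immediate.
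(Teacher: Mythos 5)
Your reduction to convexity has a genuine gap at its key step. The claim that ``the sequence of 2-cells of $N(J_k)$ is exactly the sequence of 2-cells crossed by any geodesic from the $v$-cell to the $w$-cell, and this sequence depends only on the endpoints'' is not justified: geodesics here are edge paths in the 1-skeleton of $\X$, and an edge path does not determine a sequence of 2-cells --- each edge it traverses may lie on the boundaries of several 2-cells of $\X$, including 2-cells of the \emph{other} ladder or of neither. Convexity of $N(J_1)$ and $N(J_2)$ (Lemma~\ref{lem:M(J) convex}) only says that geodesics between their common points lie in each of them, hence in their intersection; it does not exclude two distinct convex ladders sharing their end 2-cells while a common geodesic runs along boundary edges shared by the two ladders' sides. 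That is precisely the ``braided bigon'' configuration the corollary is designed to rule out, so the step where you pass from ``same geodesics'' to ``same ordered list of 2-cells'' assumes what is to be proved; the obstacle you flag at the end is not actually dissolved by the convexity argument.

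The paper closes this gap with a global counting argument rather than convexity: assuming $N(J_1)\neq N(J_2)$, it chooses a minimal disk diagram $D$ whose boundary path travels alongside $J_1$ and $J_2$, and observes that $N(J_1)\cup N(J_2)\cup D$ gives a compact contractible near-immersion to $\X$ that is not a ladder yet has only two shells (at the shared end 2-cells), contradicting Theorem~\ref{thm:123}. Some input of this strength (ultimately the 3-collapsing hypothesis via Theorem~\ref{thm:123}) appears unavoidable; Lemma~\ref{lem:M(J) convex} alone is too local. Your handling of the last sentence of the statement (the dual edges lying in a common piece) is essentially fine, but only once $N(J_1)=N(J_2)$ has been established.
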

\begin{proof}
Otherwise, choosing a minimal disk diagram whose boundary path travels ``alongside'' $J_1$ and $J_2$, 
we could form a complex from $N(J_1)\cup N(J_2) \cup D$
that isn't a ladder, but has only two shells. Its image in $\X$ would violate
Theorem~\ref{thm:123}.
\end{proof}

\begin{lem}\label{lem:equisector}
For each edge $x$ of a 2-cell $R$ of degree~$n$,
there is an edge $y$, such that the associated divisive trees $T_x, T_y$
have no common vertex besides the vertex dual to $R$.
\end{lem}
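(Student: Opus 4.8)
The plan is to exploit the structure of the 2-cell $R$ together with the fact that a divisive tree through $R$ is uniquely determined by any one of its feet on $\boundary_\p R$. Recall $\boundary_\p R = w^n$, and that the spider of $R$ at an edge $x$ has feet equal to the $\mathbb Z_n$-orbit of the midpoint of $x$. So the divisive tree $T_x$ visits the vertex $v_R$ dual to $R$, together with $n$ of the edge-midpoints lying on $\boundary_\p R$ (the feet of the spider at $x$), plus whatever other 2-cells get attached at those feet. Two divisive trees $T_x, T_y$ share an edge-midpoint vertex lying on $\boundary_\p R$ precisely when some foot of the spider at $x$ coincides with some foot of the spider at $y$, i.e.~when $y$ and $x$ lie in the same $\mathbb Z_n$-orbit under the cyclic symmetry of $\boundary_\p R = w^n$. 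Thus the first step is to reduce the claim to: there is an edge $y$ of $R$ such that $y$ and $x$ lie in different $\mathbb Z_n$-orbits of $\boundary_\p R$, and moreover $T_x$ and $T_y$ do not meet at any vertex dual to some \emph{other} 2-cell.

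First I would handle the combinatorial/orbit part. Since $|\boundary_\p r| = n |w| \geq 2n > n$ and $w$ is a nontrivial word (the boundary cycle is immersed), there are at least two distinct $\mathbb Z_n$-orbits of edges on $\boundary_\p R$ unless $|w| = 1$; and if $|w|=1$ then $R$ is a single-edge dunce-cap-like 2-cell, which one rules out using that $X$ is $3$-collapsing (hence $2$-collapsing), so no 2-cell is a proper power of a single edge. Hence pick $y$ in a different orbit from $x$. Then by the foot description above, $T_x$ and $T_y$ have no common foot on $\boundary_\p R$. The remaining worry is that $T_x$ and $T_y$, after leaving their feet on $\boundary_\p R$, wander into the rest of $\X$ and meet at the center of some other 2-cell $R' \neq R$. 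This is where Corollary~\ref{cor:braided bigons} and the convexity results (Lemma~\ref{lem:M(J) convex}, Lemma~\ref{lem:convex carriers}) come in: if $T_x$ and $T_y$ met at the center of a common 2-cell $R'$, one could form two arcs $J_1 \subset T_x$ and $J_2 \subset T_y$ running from $v_R$ to $v_{R'}$, and build the complex $N(J_1)\cup N(J_2) \cup D$ for a minimal disk diagram $D$ between the two ``alongside'' boundary paths. If this complex is not a ladder it has only two shells, contradicting Theorem~\ref{thm:123}; if it is a ladder, then by Corollary~\ref{cor:braided bigons} we get $N(J_1) = N(J_2)$, forcing the dual edges at $v_R$ (namely $x$ and $y$) to lie in a common piece of $\boundary_\p R$ — but a piece has length $< \frac1n|\boundary_\p R| = |w|$ by Lemma~\ref{lem:2cell intersections} (as in Remark~\ref{rem:tiny innerpath}), whereas the cyclic distance within $\boundary_\p R = w^n$ between representatives of the two chosen orbits can be taken to be at least $|w|$ by choosing $y$ to be the edge of $R$ one full period $w$ away from $x$; this is the contradiction that finishes the proof.

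The main obstacle I expect is pinning down exactly the right choice of $y$ so that the two conclusions of the ladder dichotomy are both impossible simultaneously. Choosing $y$ in a different $\mathbb Z_n$-orbit kills shared feet, but to also kill the ``$N(J_1)=N(J_2)$, hence dual edges in a piece'' outcome one must arrange that $x$ and $y$ are genuinely farther apart along $\boundary_\p R$ than any piece can reach; taking $y$ to be $x$ shifted by exactly one copy of $w$ (so cyclic distance exactly $|w|$, meeting the piece bound sharply) is the natural candidate, but checking that this shift lands on an edge of $R$ in a strictly different orbit and that the bound is strict enough requires care with the definitions of piece and of innerpath length. A secondary subtlety is verifying that the near-immersion $N(J_1)\cup N(J_2)\cup D \to \X$ is genuinely a near-immersion (locally injective off the $0$-skeleton) so that Theorem~\ref{thm:123} applies — this follows by the same minimality-and-reducedness argument used in the proof of Lemma~\ref{lem:M(J) convex}, which I would simply cite rather than repeat.
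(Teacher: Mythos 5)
Your reduction is consistent with the paper's toolkit and is essentially the right first half: distinct divisive trees can only re-meet at face vertices (a shared foot forces the same $\mathbb{Z}_n$-orbit, hence the same tree), and a second common face vertex forces, via Theorem~\ref{thm:123} and Corollary~\ref{cor:braided bigons}, that the two dual edges at $R$ lie in a common piece. The gap is in your final step, the actual choice of $y$, and it is not a cosmetic slip. First, the edge ``one full period $w$ away from $x$'' lies in the \emph{same} $\mathbb{Z}_n$-orbit as $x$ (that orbit is exactly the set of translates of $x$ by multiples of $|w|$ along $\partial_\p R=w^n$), so for your candidate $y$ one has $T_y=T_x$, which is precisely what must be avoided. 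Second, and more fundamentally, the edges that Corollary~\ref{cor:braided bigons} places in a common piece are not $x$ and $y$ themselves but whichever orbit representatives the arcs $J_1\subset T_x$ and $J_2\subset T_y$ use to exit $R$. Representatives of any two distinct orbits occur within cyclic distance at most $\lfloor |w|/2\rfloor$ of one another somewhere on $\partial_\p R$, so no choice of orbit can achieve the separation $\geq |w|$ your metric argument requires; moreover the bound $|S|<\tfrac1n|\partial_\p R|$ you cite is stated in Remark~\ref{rem:tiny innerpath} for shell innerpaths, not for pieces (the paper only uses that a piece is shorter than half a perimeter, in the proof of Lemma~\ref{lem:M(J) convex}). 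So the ``choose the orbits far apart'' strategy cannot close the argument.

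The paper's mechanism is combinatorial rather than metric: by Lemma~\ref{lem:2cell intersections} (connected pairwise intersections of 2-cells, a consequence of $3$-collapsing), no 2-cell of $\widetilde X$ has boundary cycle that is a concatenation of two pieces. Working in the period $w$ (i.e.\ in the underlying 2-cell of $\widetilde X$), the pieces containing the edge underlying $x$ all share that edge, so if they covered all of $w$ then the two maximal such pieces, extending in either direction from that edge, would already cover it, exhibiting the boundary cycle as a concatenation of two pieces -- contradiction. Hence some edge $y$ lies in no piece together with any representative of the orbit of $x$, and for that $y$ the braided-bigon alternative is impossible, giving $T_x\cap T_y=\{v_R\}$. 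Replacing your last step by this covering argument (and keeping your reduction, which the paper leaves implicit) yields a correct proof.
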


\begin{proof}
By Corollary~\ref{lem:2cell intersections}, each pair of
 2-cells have connected intersection.
Hence no 2-cell in $\widetilde X$ has boundary cycle that is the concatenation of two pieces. \end{proof}

Note that this is a consequence of a slightly weaker condition than $3$-collapsible.

We refer to \cite[Lem~7.16]{WiseCBMS2012} for the following properness criterion:
\begin{prop}\label{prop:cut axis}
Let $\widetilde Y$ be a graph that is a wallspace, with an action by a group $G$.
Suppose that for each infinite order $g\in G$, there is a $g$-invariant embedded line $R\subset \widetilde Y$ and a wall $W$ that separates $R$ into rays.
Then $G$ acts with torsion-stabilizers on the dual.
\end{prop}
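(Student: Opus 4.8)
\textbf{Proof proposal for Proposition~\ref{prop:cut axis} (the stated properness criterion).}

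The plan is to prove the contrapositive at the level of cube-complex geometry: if the dual cube complex $C$ has an infinite-order stabilizer of some cube, then we produce an element $g \in G$ of infinite order for which no $g$-invariant line is separated into rays by a wall. First I would recall the standard setup from Sageev's construction: vertices of the dual cube complex $C$ are \emph{orientations} (consistent choices of halfspace for each wall), two vertices are joined by an edge when they differ on exactly one wall, and $G$ acts on $C$ by permuting orientations. A cube $\sigma$ of $C$ corresponds to a finite collection of pairwise-crossing walls $\{W_1,\dots,W_k\}$ together with a base orientation; its pointwise stabilizer in $G$ is the subgroup fixing that orientation and setwise fixing (indeed, because of consistency, pointwise fixing the choices on) the $W_i$. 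So the key translation is: $g \in G$ stabilizes a cube $\iff$ there is a point $x \in \widetilde Y$ and an orientation $o$ such that $g$ fixes $o$ on every wall, equivalently the set of walls $W$ with $g W \neq W$, or with $g$ reversing the sides of $W$, is empty — more precisely the ``wall-moving'' behavior of $g$ is trivial on some sub-collection realizing a cube.

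The main step is then: suppose toward a contradiction that $g$ has infinite order and stabilizes a cube. By hypothesis there is a $g$-invariant embedded line $R \subset \widetilde Y$ and a wall $W$ separating $R$ into two rays $R^+, R^-$. Since $g$ preserves $R$ and has infinite order, $g$ acts on $R$ as a translation (the only infinite-order isometries of a line preserving it), so $g^n$ pushes the crossing point $R \cap W$ off to infinity along $R$; in particular the walls $g^n W$ for $n \in \integers$ are pairwise distinct, and each $g^n W$ separates $R$ into rays. I would then observe that the walls $\{g^n W : n \in \integers\}$ are all distinct and that $g$ acts on this bi-infinite family by a shift, hence $g$ does not fix any orientation on this family (it permutes the halfspaces transitively in the direction of translation along $R$, so there is no $g$-fixed consistent choice — any would force a ``last'' halfspace, contradicting the shift). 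Translating back: the cube that $g$ supposedly stabilizes is built from some finite wall-collection on which $g$ is trivial, but every orientation class realizing a vertex of $C$ that $g$ fixes must be $g$-invariant on \emph{all} walls it disagrees-with-the-basepoint on, and a standard argument (using that $\widetilde Y$ is a wallspace so orientations are ``almost equal'' to the principal one) shows a $g$-fixed vertex forces $g$ to fix an orientation on cofinitely many walls, in particular on all but finitely many $g^n W$ — but $g$ shifts the $g^n W$, contradiction. Hence no such $g$ exists, i.e.\ all cube-stabilizers are torsion.

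I expect the main obstacle to be making the ``$g$ fixes a vertex of $C$ $\Rightarrow$ $g$ fixes an orientation on cofinitely many walls, hence fixes $g^nW$ for large $n$'' step fully rigorous, since it requires being careful about the difference between $g$ \emph{setwise} preserving a wall and $g$ preserving a \emph{side} of it, and about which walls actually separate the two ``ends'' associated to the vertex. The cleanest route is probably to invoke the axis/wall interaction directly: a $g$-fixed vertex $v$ of $C$ and a $g$-fixed neighbor, or the endpoints of the $g$-axis in $C$, determine a set of walls ``between'' two configurations that $g$ preserves, and one shows $W$ (hence all $g^nW$) lies in this set because it separates $R$ into rays on which $g$ translates; since $g$ must then fix each such wall's side, it cannot shift them. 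Alternatively, and perhaps more simply, one cites \cite[Lem~7.16]{WiseCBMS2012} structurally — the proposition as stated is precisely that lemma — and the role here is only to record the criterion; in the body of our paper we need only the \emph{statement}, applied with $\widetilde Y = \Gamma \to \X$ and the $g$-invariant line being a divisive-tree geodesic $\dot\gamma$ together with a natural wall $W$ as produced in the proof of the codimension-1 theorem above, so a short proof that quotes \cite{WiseCBMS2012} and sketches the orientation argument above suffices for self-containedness.
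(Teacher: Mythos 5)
Your final suggestion is exactly what the paper does: Proposition~\ref{prop:cut axis} is not proved in the paper at all, but is quoted as a (generalized restatement of a) known criterion, with the reader referred to \cite[Lem~7.16]{WiseCBMS2012}; the paper only verifies its hypothesis later, so citing the lemma is the intended ``proof.'' Your optional self-contained sketch is the standard Sageev ultrafilter argument and is essentially sound, but note two points if you spell it out: first, from ``$g$ stabilizes a cube'' you should pass to a power $g^k$ fixing a vertex (an ultrafilter) before running the shift argument; second, the claim that $g$ ``acts on $R$ as a translation because $g$ has infinite order'' does not follow from the statement as literally given --- the restriction of $g$ to $R$ could be the identity or a reflection even though $g$ has infinite order, and indeed without some properness of the $G$-action on $\widetilde Y$ (or the interpretation of $R$ as a genuine axis, as in the cited lemma and in the paper's application, where the line is a $g$-invariant quasi-geodesic axis in the $1$-skeleton) the statement would fail, e.g.\ for a trivial action. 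With properness, $g^2$ cannot fix $R$ pointwise, so the translation assumption is legitimate, the walls $g^{kn}W$ are pairwise distinct, and your ``cofinitely many walls agree with a principal ultrafilter'' step (valid on the canonical component of the dual) yields the contradiction as you describe.
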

We verify the above condition by showing that geodesics are cut at a single point,
hence a $g$-invariant quasi-geodesic axis $R$ is  cut as well.

The following cocompactness criterion
 is a generalized restatement of Sageev's result \cite{Sageev97},
 and we refer to  \cite{HruskaWiseAxioms} for a more elaborate discussion.
\begin{prop}\label{prop:cocompactness}
Let $G$ act properly and cocompactly on a graph $\widetilde Y$ that is also a wallspace.
Suppose the stabilizer of each wall is a quasiconvex subgroup of $G$.
Then $G$ acts cocompactly on the dual cube complex.
\end{prop}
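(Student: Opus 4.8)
The final statement is the Sageev-type cocompactness criterion (Proposition~\ref{prop:cocompactness}), and the plan is to reconstruct the proof of that criterion in the form given by Hruska--Wise. Write $C$ for the cube complex dual to the wallspace on $\widetilde Y$: its $0$-cubes are the consistent orientations $\sigma$ of the walls satisfying the descending chain condition, two $0$-cubes span an edge when they differ on exactly one wall, and a finite collection of $0$-cubes spans a cube when the walls on which they differ pairwise cross. Among the $0$-cubes are the \emph{canonical} orientations $\sigma_y$ for $y\in \widetilde Y^0$, and $C$ is the component of $C$ containing these. The $G$-action on $\widetilde Y$ preserving the wall structure induces an action on $C$, and a cube of $C$ is determined by one of its $0$-cubes together with the finite set of pairwise-crossing walls it spans. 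Hence it suffices to prove: (a) $C$ is finite dimensional; and (b) there is a constant $N$ such that every $0$-cube of $C$ lies at $\ell^1$-distance at most $N$ from some canonical $0$-cube. Granting (a) and (b), cocompactness of $G\curvearrowright\widetilde Y$ yields finitely many $G$-orbits of canonical $0$-cubes, hence finitely many orbits of $0$-cubes inside the $N$-ball around them, hence finitely many orbits of $0$-cubes of $C$; then (a) promotes this to finitely many orbits of cubes, i.e.\ $C/G$ is compact.

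For (a), note that a proper cocompact action on the graph $\widetilde Y$ forces $\widetilde Y$ to be locally finite, and (as holds in our setting, where each edge of the barycentric subdivision is crossed by boundedly many natural walls) there are finitely many $G$-orbits of walls. A family of pairwise-crossing walls has pairwise-intersecting carriers; using finitely many wall-orbits together with the hypothesis that wall-stabilizers are quasiconvex --- so that each wall's carrier is coarsely a single finitely generated subgroup's worth of $\widetilde Y$ --- one argues that such a family cannot be arbitrarily large, giving $\dim C<\infty$. For (b), fix a $0$-cube $\sigma$ and choose $y\in\widetilde Y^0$ minimizing $n:=\#\{W : y\notin\sigma(W)\}$; connectivity of $C$ makes this set finite, and flipping its walls one at a time, innermost first (using the descending chain condition to pick a minimal halfspace at each stage), realizes a combinatorial geodesic of length $n$ from $\sigma_y$ to $\sigma$, so $n=\dist_C(\sigma,\sigma_y)$. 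The crucial point is that minimality of $y$ forces each wall $W$ in this set to have carrier meeting a neighborhood of $y$ of radius independent of $\sigma$: were the carrier of some such $W$ far from $y$, pushing $y$ across $W$ would produce a strictly closer canonical $0$-cube. Since $\widetilde Y$ is locally finite, $G$-cocompact, has finitely many wall-orbits, and has quasiconvex wall-stabilizers, only boundedly many walls can have carriers meeting a ball of any fixed radius; hence $n\le N$ for a uniform $N$, which is (b).

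I expect step (b) --- and within it, precisely the claim that the walls on which $\sigma$ disagrees with a \emph{nearest} canonical orientation all have carriers inside a $\sigma$-independent neighborhood of the chosen basepoint --- to be the main obstacle, since this is exactly the place where the geometry of the walls (convex carriers, cocompactness of wall-stabilizers on them) must be invoked rather than the purely combinatorial bookkeeping of orientations. The careful version of this estimate is carried out in \cite{Sageev97} and \cite{HruskaWiseAxioms}, to which we refer for the remaining details.
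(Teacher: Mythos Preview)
The paper does not actually prove this proposition: it is stated as a known criterion, attributed to Sageev and cited to \cite{Sageev97} and \cite{HruskaWiseAxioms}, with no argument given. Your sketch therefore goes further than the paper itself does---you outline the standard Sageev/Hruska--Wise argument and then defer to those same references for the delicate step~(b)---so there is nothing substantive to compare, and your treatment is consistent with (indeed more detailed than) the paper's.
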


\begin{thm}
Let $X$ be 3-collapsing, and suppose that the attaching maps of 2-cells are immersions.
Then $\pi_1\dot X$ acts properly (and cocompactly) on the CAT(0) cube complex dual to the wallspaces of $\X$.
\end{thm}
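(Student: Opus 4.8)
The plan is to verify the two hypotheses of Proposition~\ref{prop:cut axis} (for properness) and Proposition~\ref{prop:cocompactness} (for cocompactness), with $\widetilde Y = \Gamma$, the union of divisive trees, equipped with the natural wall structure coming from the regular neighborhoods of divisive trees, and $G = \pi_1\dot X$. Since $X$ is $3$-collapsing, Lemma~\ref{lem:2cell intersections} and Lemma~\ref{lem:2-collapsing implies bicollapsible} apply, so $X$ is bicollapsible, divisive trees embed in $\X$ by Lemma~\ref{bicollapsible implies tree}, and carriers of walls (and of divisive trees) are convex by Lemma~\ref{lem:convex carriers}. The wall stabilizers act on these convex carriers, which are tree-like thickened objects, so I will argue they are virtually free and quasiconvex — quasiconvexity follows from convexity of the carriers $N(W)$ together with the fact (from Lemma~\ref{lem:M(J) convex}) that each $M(J)$ is a convex subcomplex, hence the inclusion $N(W)\hookrightarrow \X$ is a quasi-isometric embedding.

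For properness via Proposition~\ref{prop:cut axis}: given an infinite-order $g\in \pi_1\dot X$, it has a combinatorial quasi-geodesic axis $R$ in $\X$ (using hyperbolicity of $\pi_1\dot X$, which holds by Theorem~\ref{thm:bicollapsible branched is dehn} when $\pi_1 X$ gives $\dot X$ hyperbolic $\pi_1$, or more directly from the Dehn property). I claim some wall $W$ cuts $R$ into rays, meaning $W\cap R$ is a single point and the two subrays lie in distinct halfspaces. The key geometric input is Lemma~\ref{lem:equisector}: for an edge $x$ of a $2$-cell $R_0$ traversed by a portion of the axis, the divisive tree $T_x$ and a suitably chosen $T_y$ share no vertex other than the one dual to $R_0$, so the dual wall separates locally. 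To promote this local separation to a global cut of the axis, I will use that $N(W)$ is convex (Lemma~\ref{lem:convex carriers}): a geodesic — or a quasi-geodesic after the standard adjustment — meets a convex wall-carrier in a connected set, and because the wall is ``thin'' (its preimage in $\widetilde X$ sits inside a single $S(e)$, as in the proof of the codimension-1 theorem of \S\ref{sec:branched sec}), the intersection with the axis is in fact a single point, with the two ends escaping into the two halfspaces.

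For cocompactness via Proposition~\ref{prop:cocompactness}: $\pi_1\dot X$ acts properly and cocompactly on $\X^1$ (since $\dot X$ is compact when $X$ is — here I take $X$ compact, as in the hyperbolic setting), and $\Gamma$ is quasi-isometric to $\X^1$ and carries a $\pi_1\dot X$-invariant wall structure with finitely many wall-orbits. By the previous paragraph's quasiconvexity of wall-carriers, each wall stabilizer is quasiconvex, so Proposition~\ref{prop:cocompactness} yields cocompactness of the action on the dual cube complex.

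The main obstacle I anticipate is the properness step — specifically, upgrading the \emph{local} separation provided by Lemma~\ref{lem:equisector} to a genuine cut of the whole axis into rays. The subtlety is that an infinite-order element's axis is only a quasi-geodesic, so "convex carrier meets geodesic in a connected arc" must be replaced by a coarse statement, and one must rule out the axis re-crossing the wall far away. This is handled by the convexity of $N(W)$ together with the fact that $\dot\gamma$ (a lift of a bi-infinite geodesic in $\widetilde X^1$) travels arbitrarily deep into both halfspaces, exactly as in the proof that $\pi_1\dot X$ has a codimension-1 subgroup when $\pi_1X$ is infinite; the quasi-geodesic axis of $g$ is then cut because it fellow-travels such a $\dot\gamma$. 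I would make this precise by first establishing the cut for honest geodesics between lattice points on the axis and then invoking the Morse/stability lemma in the hyperbolic group $\pi_1\dot X$.
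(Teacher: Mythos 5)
Your overall skeleton matches the paper's: verify Proposition~\ref{prop:cut axis} for (torsion-)properness, use Proposition~\ref{prop:cocompactness} together with Lemma~\ref{lem:convex carriers} for cocompactness, and pass from geodesics to quasi-geodesic axes. (One correction of framing: the paper takes $\widetilde Y$ to be the $1$-skeleton of $\X$, not $\Gamma$; the walls cut $\X$, and the claim that $\Gamma$ is quasi-isometric to $\X^1$ is neither needed nor clearly true.) But the heart of the theorem is missing from your proposal, and the step you substitute for it is not valid.

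The gap is in the properness step. You argue: the carrier $N(W)$ is convex, a geodesic meets a convex subcomplex in a connected set, the wall is ``thin,'' therefore the wall dual to an edge of the axis crosses it exactly once. This does not follow. A geodesic $\gamma$ can enter the convex ladder $N(J_W)$, run along it, and exit across $W$ a second time; convexity of the carrier is perfectly compatible with $W\cap\gamma$ having two points, and the paper's proof explicitly starts from that situation rather than contradicting it. The actual argument is a substitution argument: when $W$ crosses $\gamma$ twice, one produces a \emph{different} wall $V$ (using Lemma~\ref{lem:equisector} inside an intermediate or extremal $2$-cell of the ladder), and then must rule out $V$ itself re-crossing $\gamma$, which requires Corollary~\ref{cor:braided bigons} and the ``ladder or three shells'' trichotomy of Theorem~\ref{thm:123}, organized into the cases $|J_W|\geq 3$, $|J_W|=1$, $|J_W|=2$; the last case even requires a third wall $U$ and the exclusion of the further crossing configurations one-by-one. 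None of this is present in your proposal, and it cannot be bypassed by convexity alone. Relatedly, your appeal to the codimension-1 argument of \S\ref{sec:branched sec} does not transfer: there the geodesic $\dot\gamma$ is a lift of a geodesic of $\widetilde X$, so $W\cap\dot\gamma$ projects into a single $S(e)$ met once by $\gamma$; a general geodesic or axis in $\X$ does not project to a geodesic of $\widetilde X$, so that single-crossing argument is unavailable. Finally, note that Proposition~\ref{prop:cut axis} only yields torsion stabilizers; to conclude properness you still need, as the paper does, that torsion subgroups of the word-hyperbolic $\pi_1\dot X$ are finite and that the action on the dual is cocompact, so properness is deduced from cocompactness plus finite stabilizers rather than from Proposition~\ref{prop:cut axis} alone.
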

\begin{proof}
Below we will verify the hypothesis of Proposition~\ref{prop:cut axis} with 
$\widetilde Y$ equal to the 1-skeleton of $\X$.
Since all torsion-subgroups of the word-hyperbolic group $G=\pi_1\dot X$ are finite \cite{ABC91},
we see that $G$ acts with finite stabilizers. 
Cocompactness holds by Proposition~\ref{prop:cocompactness} and Lemma~\ref{lem:convex carriers}.
Consequently, the action is proper since it is both cocompact and has finite stabilizers.

Let $\gamma$ be a geodesic in $\X$.
Let $\epsilon_1$ be an edge of $\gamma$.
Let $W$ be a wall dual to $\epsilon_1$.

If $W\cap \gamma$ is a singleton, we are done.
Otherwise, suppose $W$ intersects $\gamma$ at a second edge $\epsilon_2$.
By Lemma~\ref{lem:convex carriers}, there is an arc $J_W\subset W$ whose endpoints are on $\epsilon_1,\epsilon_2$,
such that a subpath $\epsilon_1 \gamma'\epsilon_2\subset \gamma$ lies in $N(J_W)$.

It will be convenient to decompose the argument into three cases according to the number of 2-cells in the carrier $N(J_W)$.
This corresponds to the length of $J_W$ in a metric such that each edge of $W$ has length $\frac 12$.

{\bf Case $|J_W|\ge3$:}
If $|J_W|\geq 3$, then $N(J_W)$ is a ladder consisting of at least three consecutive 2-cells.
Let $R$ be an intermediate 2-cell of $N(J_W)$, and let $R_e$ and $R_w$ be the 2-cells in $N(J)$ on either side of $R$. 
Let  the cycle of $\boundary R$ be of the form $nesw$ where $e$ and $w$ are maximal
pieces of $R$ with $R_e$ and $R_w$.
We refer to Figure~\ref{fig:Case1and2} on the left.

By Lemma~\ref{lem:equisector}, there is a wall $V$ dual to 
 edges of $\gamma\cap R$ so that $V\cap R_e= V\cap R_w = \emptyset$.

We verify that $V$ cannot cut through $L$ except at $V\cap R$.
Indeed, suppose $V$ intersects $L$ at a 2-cell $R_w'$, which is on the $R_w$ side of $R$.
Corollary~\ref{cor:braided bigons} applied to the arc $J_W$ in $W$  joining the midpoints of $R_w'$ and $R$,
and  the arc $J_V$ in $V$ joining the midpoints of $R_w'$ and $R$,
shows that the carriers of these arcs are the same.
However, the carrier of $J_W$ is the subladder from $R_w'$ to $R$,
but the carrier of $J_V$ contains  an additional 2-cell carrying the  additional edge where $V$ exits $R$ at either $n$ or $s$.
The analogous argument holds for a 2-cell $R_e'$ on the $B$ side.

We now show that $V$ is not crossed by $\gamma_1$ or $\gamma_2$ beyond $L$.
Suppose, that $\gamma_1$ crosses $V$ beyond $L$, then $\epsilon_1$ lies on a 2-cell $C$ of $N(V)$ by Lemma~\ref{lem:convex carriers}. Let $W'$ be the arc in $W$ from the center of $R$
to the center of $C$. Let $V'$ be the arc in $V$ from the center of $R$ to the center of $C$.
Then we obtain a contradiction precisely as in the previous case.
The analogous argument works when $\gamma_2$ crosses $V$ beyond $L$.

{\bf Case $|J_W|=1$:}
If two edges $\epsilon_1,\epsilon_2$ of $\gamma$  are dual to a wall $W$ and lie in the same 2-cell $R$, then we can assume $\epsilon_1$ is leftmost on $\gamma \cap R$ with this property.
By Lemma~\ref{lem:equisector}, there is a wall $V$ cutting $R$ and separating $\epsilon_1,\epsilon_2$. 
Either $V$ isn't dual to any other edge of $\gamma$, or $V$ cuts
$\gamma$ in a pair of 2-cells on a ladder of length~$\geq 3$ (dealt with in Case~3),
or $V$ cuts $\gamma$ in a pair of 2-cells on a ladder of length~$2$ (dealt with in Case~2).
See the second diagram in Figure~\ref{fig:Case1and2}.

{\bf Case $|J_W|=2$:}
Suppose  $N(J_W)$ consists of two 2-cells.
Lemma~\ref{lem:equisector} provides a tree $T_V$ intersecting $T_W$ at the center of $R$
and at no other point. The 2-cell $R$ has two edges dual to $W$, namely $\epsilon_2$
and another edge $\epsilon_w$. Moving in $\boundary_\p R$ in the direction from $\gamma\cap R$ to $\gamma\cap W$, we choose the first edge $\epsilon_v$ dual to $T_V$ arising after $\epsilon_w$.
Let $V$ be the wall dual to $\epsilon_v$, so that $V$ separates $\epsilon_w,\epsilon_2$.

If $V$  is not crossed by $\gamma$ at another edge then we are done.
Otherwise, note that $V$ cannot cross $\gamma$ on the left, since then Lemma~\ref{lem:convex carriers} would imply that $\epsilon_1$ lies on the carrier of $V$ and Corollary~\ref{cor:braided bigons} would then imply that $\epsilon_v$ and $\epsilon_w$ lie in a common piece. 
See the upper middle diagram of Figure~\ref{fig:Case1and2}.

If $V$ returns using an arc $J_V$ of length~$\geq 3$ then we are done (as in Case~3).
Hence we assume $J_V$ has length~$2$.
Observe that the edges of $V$ and $W$ in $R$ that are associated to the  other 2-cells of $J_V$ and $J_W$ do not lie in adjacent  pieces of the 2-cell $R$ - or else we would have a non-ladder  with  three 2-cells but only two shells,  as  $R$ cannot be a shell on the geodesic $\gamma$.

Let $U$ be a wall dual to an edge of $\boundary R$ that does not lie in a piece with either.
Then $U$ cannot cross $\gamma$ a second time. There are eight cases to consider, and we exclude them one-by-one, referring to Figure~\ref{fig:ThirteenthImam} for a visual.

\begin{figure}
  \centering
  \includegraphics[width=\textwidth]{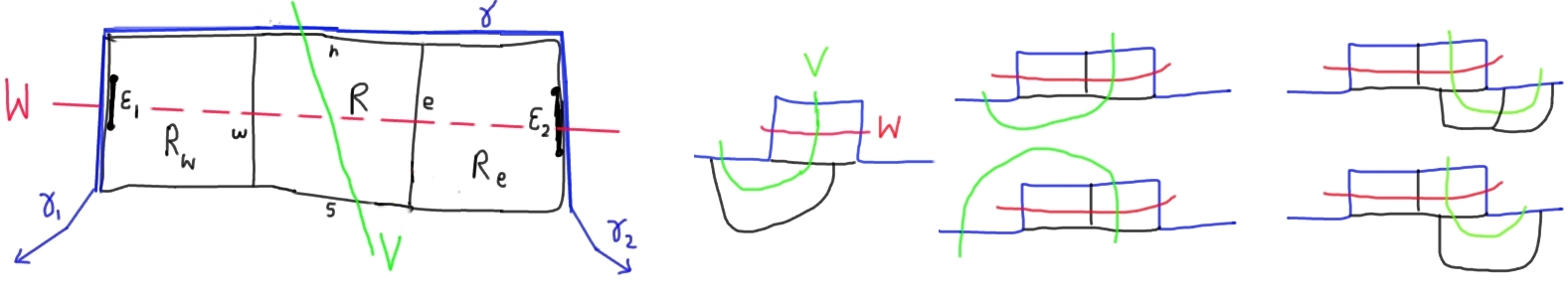}
  \caption{\label{fig:Case1and2} Case 3 on the left.
  Case 1 and the beginning of Case 2 on the right.}
\end{figure}

\begin{figure}
  \centering
  \includegraphics[width=.5\textwidth]{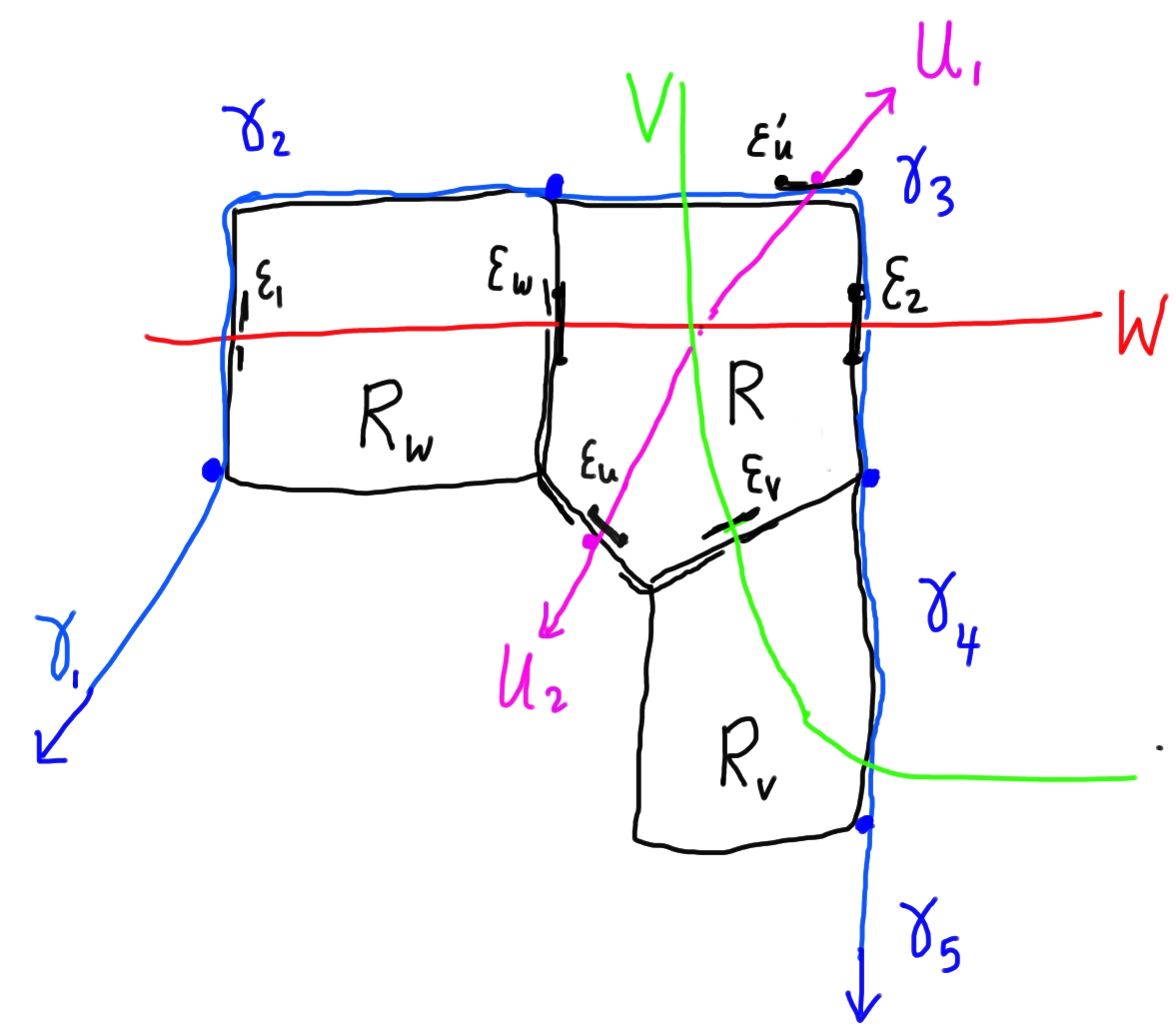}
  \caption{\label{fig:ThirteenthImam} There are 10 cases to exclude,
  according to which of the two sides of $U$ is crossed by which of the 5 subgeodesics of $\gamma$.}
\end{figure}

$U$ does not intersect $\gamma_3$ at a second point,
since $U\cap \boundary R$ consists of exactly two points, and one of them is not in $\gamma$.

$U_2$ does not intersect $\gamma_2$ since  then
by Corollary~\ref{cor:braided bigons},
$\epsilon_w$ and $\epsilon_u$ lie in the same piece,
which is impossible.
Similarly $U_2$ does not intersect $\gamma_4$, since 
then
$\epsilon_v$ and $\epsilon_u$ lie in the same piece.

$U_2$ does not intersect $\gamma_1$ since then $\epsilon_1$
would lie on $N(U)$ by Lemma~\ref{lem:convex carriers}, 
and so $J_W\subset N(U)$, and so $\epsilon_w,\epsilon_u$ lie in the same piece by Corollary~\ref{cor:braided bigons}.
Similarly $U_2$ does not intersect $\gamma_5$.

Note that $\epsilon_u'$ and $\epsilon_w$ cannot lie in the same piece,
since they are separated by $V$ which is not dual to an edge in the same piece
with $\epsilon_w$.
Consequently, we can repeat 
each of the arguments above verbatim, replacing $U_2$ with $U_1$ and $\epsilon_u$ with $\epsilon_u'$.
We conclude that $U_1$ does not intersect $\gamma_1$, $\gamma_2$, $\gamma_4$, or $\gamma_5$, as claimed.
\end{proof}

{\bf Acknowledgment:} We are grateful to Tim Riley for helpful comments
on the generalized deHn property.

\bibliographystyle{alpha}
\newcommand{\etalchar}[1]{$^{#1}$}
\def\cprime{$'$} \def\polhk#1{\setbox0=\hbox{#1}{\ooalign{\hidewidth
  \lower1.5ex\hbox{`}\hidewidth\crcr\unhbox0}}} \def\cprime{$'$}
  \def\cprime{$'$} \def\polhk#1{\setbox0=\hbox{#1}{\ooalign{\hidewidth
  \lower1.5ex\hbox{`}\hidewidth\crcr\unhbox0}}}

\end{document}